\documentclass[12pt]{amsart}
\usepackage[utf8]{inputenc} 
\synctex=1
\usepackage[active]{srcltx}
\usepackage{a4wide}
\usepackage{amsthm,amsfonts,amsmath,mathrsfs,amssymb}
\usepackage{dsfont}
\usepackage{mathtools}
\usepackage[T1]{fontenc}
\usepackage[utf8]{inputenc}
\usepackage{enumerate}
\usepackage{comment}
\usepackage[left=4cm,top=4cm,right=4cm,bottom=4cm]{geometry}
\setlength{\textheight}{20cm} \textwidth16cm \hoffset=-2truecm
\numberwithin{equation}{section}

\usepackage{geometry}
\usepackage{graphicx}
\usepackage{amssymb}
\usepackage{amsmath}
\usepackage{amsthm}
\usepackage{listings}
\usepackage[colorlinks=true,urlcolor=blue,citecolor=blue,linkcolor=blue]{hyperref}
\usepackage{esint}
\usepackage{xcolor}

\usepackage{etex}
\usepackage[all]{xy}
\usepackage{pgf,tikz}
\usetikzlibrary{arrows}
\usepackage{pgfplots,pgfcalendar}
\usetikzlibrary{patterns}
\usepackage{hyperref}
%%%%%%%%%%%%%%%%%%   Greek letters
\def\a{\alpha}       \def\b{\beta}

%%%%%%%%%%%%%%%%%%   Boldface and script letters
\def\D{{\mathbb D}}  \def\T{{\mathbb T}}
\def\C{{\mathbb C}}  \def\N{{\mathbb N}}

\def\R{\mathbb R}

%%%%%%%%%%%%%%%%%%   Other mathematical symbols
\def\({\left(}       \def\){\right)}

\def\Re{{\sf Re}\,}
\def\Im{{\sf Im}\,}
%%%%%%%%%%%%%%%%%% environments.

\newtheorem{theorem}{Theorem}[section]
\newtheorem{lemma}[theorem]{Lemma}
\newtheorem{proposition}[theorem]{Proposition}
\newtheorem{corollary}[theorem]{Corollary}

\theoremstyle{definition}
\newtheorem{definition}[theorem]{Definition}
\newtheorem{example}[theorem]{Example}

\theoremstyle{remark}
\newtheorem{remark}[theorem]{Remark}

\numberwithin{equation}{section}

\DeclareMathOperator*{\esssup}{ess\,sup}
\begin{document}
\title[Integration operators]{Integration operators in average radial integrability spaces of analytic functions}
%%% Information for the first author
\author[T. Aguilar-Hern\'andez ]{Tanaus\'u Aguilar-Hern\'andez}
\address{Departamento de Matem\'atica Aplicada II and IMUS, Escuela T\'ecnica Superior de Ingenier\'ia, Universidad de Sevilla,
	Camino de los Descubrimientos, s/n 41092, Sevilla, Spain}
\email{taguilar@us.es}

\author[M.D. Contreras]{Manuel D. Contreras}
\address{Departamento de Matem\'atica Aplicada II and IMUS, Escuela T\'ecnica Superior de Ingenier\'ia, Universidad de Sevilla,
	Camino de los Descubrimientos, s/n 41092, Sevilla, Spain}
\email{contreras@us.es}

\author[L. Rodr\'iguez-Piazza]{Luis Rodr\'iguez-Piazza}
\address{Departmento de An\'alisis Matem\'atico and IMUS, Facultad de Matem\'aticas, Universidad
	de Sevilla, Calle Tarfia, s/n 41012 Sevilla, Spain}
\email{piazza@us.es}

\subjclass[2010]{Primary 30H20, 47B33, 47D06; Secondary 46E15, 47G10}

\date{\today}

\keywords{Mixed norm spaces, Integration operator, Littlewood-Paley type inequalities.}

\thanks{This research was supported in part by Ministerio de Econom\'{\i}a y Competitividad, Spain,  and the European Union (FEDER), project PGC2018-094215-13-100,  and Junta de Andaluc{\'i}a, FQM133 and FQM-104.}

\maketitle

\begin{abstract}
In this paper we characterize the boundedness, compactness, and weak compactness of the integration operators
\begin{align*}
T_g (f)(z)=\int_{0}^{z} f(w)g'(w)\ dw
\end{align*}
acting on the average radial integrability spaces $RM(p,q)$. For these purposes, we develop different tools such as a description of the bidual of $RM(p,0)$ and estimates of the norm of these spaces using the derivative of the functions, a family of results that we call Littlewood-Paley type inequalities.
\end{abstract}

\tableofcontents

\section{Introduction}

Let $X$ be a Banach space of analytic functions on the unit disc $\D$. Let $T_g$ be given by 
\begin{align*}
T_{g}(f)(z)=\int_{0}^{z} f(w)g'(w)\ dw,\quad f\in X,
\end{align*}
where $g:\D\rightarrow \C$ is an analytic function. This operator is called integration operator.

 In \cite{pommerenke_schlichte_1977} Ch. Pommerenke proved that this operator is bounded  on the Hardy space $H^2$ if and only if $g$ belongs to the space of analytic functions of bounded mean oscillation $BMOA$. Moreover, he also obtained that $T_g$ is compact if and only if $g$ belongs to the space of analytic functions of vanishing mean oscillation $VMOA$. Later on, A. Aleman and A. G. Siskakis extended these results \cite{aleman_integral_1995} to Hardy spaces $H^{p}$ with $1\leq p<+\infty$. 
In addition,  they also showed in \cite{aleman_integration_1997} that the integration operator $T_g$ is bounded on the Bergman space $A^p$, with $1\leq p<+\infty$, if and only if $g$ belongs to the Bloch space $\mathcal{B}$. Additionally, they obtained a similar result for the compactness of $T_g$ but in this case the function $g$ must belong to the little Bloch space $\mathcal{B}_0$. Let us point out that this characterization is still an open problem for $H^\infty$.

In this article we are going to study this kind of results for a family of spaces that encompasses the Bergman spaces $A^p$ and the Hardy spaces $H^p$. This family is the one formed by the space of average radial integrability, that we denote by $RM(p,q)$ (see Definition~\ref{definitionRM}).
In Section 5 we provide a characterization of when the integration operator $T_g$ is bounded or compact over the average radial integrability spaces $RM(p,q)$ (see Theorem~\ref{thboundedopinteg}). For $p<+\infty$, we show that $T_g$ is bounded (resp. compact) over the $RM(p,q)$ spaces if and only if $g$ belongs to the Bloch space $\mathcal{B}$ (resp. little Bloch $\mathcal{B}_0$).

The main issue in this paper, that will be addressed in Section 6, is the weak compactness of the integration operator $T_g$ acting on $RM(p,q)$. When $1<p\leq+\infty$, $1<q<+\infty$ the space $RM(p,q)$ is reflexive. Therefore, the interesting cases are when either $p$ or $q$ belongs to $\{1,+\infty\}$. In this setting we prove:

\begin{theorem} \label{Thm:introduccion} Let $g\in\mathcal{B}$. Then
	\begin{enumerate}
		\item If $1<p< +\infty$, the operator $T_g: RM(p,\infty)\rightarrow RM(p,\infty)$ is weakly compact if and only if $T_g(RM(p,\infty))\subset RM(p,0)$ (see Theorem \ref{weakcompactnessRM-p-infty}).
		\item If $1<q< +\infty$, the operator $T_g: RM(1,q)\rightarrow RM(1,q)$ is weakly compact if and only if $T_g:RM(1,q)\rightarrow RM(1,1)$ is compact and if and only if $g$ belongs to the weakly little Bloch space (see Definition \ref{def:weaklylittleBloch}, Proposition \ref{Prop: weak-compact-1-q} and Theorem \ref{Thm:weakly-compact-RM(1,q)}).
		\item If $1\leq p< +\infty$, the operator $T_g: RM(p,1)\rightarrow RM(p,1)$ is weakly compact if and only if $T_g:RM(p,1)\rightarrow RM(p,1)$ is compact and if and only if $g$ belongs to the little Bloch space (see Corollary \ref{Cor:weak-compactness-p1}).	
	\end{enumerate}
\end{theorem}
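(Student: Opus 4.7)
My plan is to treat the three parts separately, each leveraging the duality machinery and Littlewood--Paley type estimates developed in earlier sections, and exploiting that the non-reflexivity of $RM(p,q)$ is concentrated in exactly one index.

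For part (1) I would combine two ingredients. First, the bidual identification $RM(p,\infty)=RM(p,0)^{**}$, announced in the abstract and presumably established in an earlier section of the paper. Second, the observation that $T_g$ on $RM(p,\infty)$ coincides with the bitranspose of $T_g$ on $RM(p,0)$: both operators agree on analytic polynomials, which are norm-dense in $RM(p,0)$ and weak-$*$ dense in $RM(p,\infty)$, and $T_g$ is continuous in the relevant topologies. With these in hand, the classical characterization that an operator $S:X\to X$ is weakly compact if and only if $S^{**}(X^{**})\subset X$, together with Gantmacher's theorem, yields that $T_g$ is weakly compact on $RM(p,\infty)$ if and only if $T_g$ is weakly compact on $RM(p,0)$, if and only if $T_g(RM(p,\infty))\subset RM(p,0)$.

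For part (2) I would prove the chain (iii)$\Rightarrow$(ii)$\Rightarrow$(i)$\Rightarrow$(iii). The implication (ii)$\Rightarrow$(i) is elementary: compose the compact operator $T_g:RM(1,q)\to RM(1,1)$ with the natural continuous inclusion between $RM(1,1)$ and $RM(1,q)$ to deduce weak compactness into $RM(1,q)$. For (iii)$\Rightarrow$(ii) I would use the Littlewood--Paley type inequalities to estimate $\|T_g f\|_{RM(1,1)}$ via integrals of $|f|\,|g'|$ weighted by $1-|z|^2$, so that the decay of $g'$ encoded in the weakly little Bloch condition lets the dilates $g_r(z)=g(rz)$ approximate $g$ in operator norm from $RM(1,q)$ into $RM(1,1)$, with each $T_{g_r}$ compact. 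The hard direction is (i)$\Rightarrow$(iii): argue contrapositively, pick points $z_n\to\partial\D$ where the weakly little Bloch condition fails, and build a bounded sequence of test functions in $RM(1,q)$ based near $z_n$ (via suitable reproducing kernels or atoms) whose images under $T_g$ are essentially disjointly supported and $\ell^1$-equivalent, blocking weak compactness.

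For part (3), the equivalence between compactness and membership in $\mathcal{B}_0$ is the endpoint case $q=1$ of the characterization referenced in Section~5. The substance therefore lies in showing weak compactness implies $g\in\mathcal{B}_0$, which I would again establish contrapositively: given $z_n\to\partial\D$ with $(1-|z_n|^2)|g'(z_n)|\geq\delta>0$, the $L^1$-type structure of $RM(p,1)$ in its second index supplies a normalized sequence $(f_n)$ in $RM(p,1)$ with essentially disjoint supports such that $(T_g f_n)$ is $\ell^1$-equivalent and has norms bounded below by a constant multiple of $\delta$, ruling out weak compactness. I expect the principal obstacle to be the construction in part (2): pinning down the weakly little Bloch space (Definition~\ref{def:weaklylittleBloch}) and designing test functions in $RM(1,q)$ that exactly detect its defining weak-type decay. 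Parts (1) and (3) then follow from standard Banach-space duality and localization arguments grafted onto the structural results of the paper.
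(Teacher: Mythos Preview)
Your treatment of part~(1) is correct and essentially identical to the paper's: identify $RM(p,\infty)$ with $RM(p,0)^{**}$, check that $T_g$ on $RM(p,\infty)$ is $(T_g|_{RM(p,0)})^{**}$ under this identification, and invoke the classical criterion $S$ weakly compact $\Leftrightarrow S^{**}(X^{**})\subset X$.

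Part~(2), however, has two genuine gaps. First, your implication (ii)$\Rightarrow$(i) fails because the inclusion goes the wrong way: for $q>1$ one has $RM(1,q)\subset RM(1,1)$, not $RM(1,1)\subset RM(1,q)$, so there is no bounded map $RM(1,1)\to RM(1,q)$ to compose with. The paper's argument here is more delicate: starting from a sequence $(f_n)$ in $B_{RM(1,q)}$ tending to $0$ uniformly on compacta, compactness into $RM(1,1)$ gives $\rho_{1,1}(T_gf_n)\to 0$, hence the radial integrals $H_n(\theta)=\int_0^1|T_gf_n(re^{i\theta})|\,dr$ form a sequence bounded in $L^q(\T)$ that tends to $0$ in $L^1(\T)$; this forces a subsequence weakly null in $L^q$, and Mazur's lemma produces the convex combinations required by Lemma~\ref{lemmaweakcompact1q}. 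Second, your route (iii)$\Rightarrow$(ii) via dilates cannot work: the weakly little Bloch condition is only an \emph{almost-everywhere} radial decay, and for $g\in\mathcal{B}_{0,w}\setminus\mathcal{B}_0$ the dilates $g_r$ do \emph{not} approximate $g$ in $\mathcal{B}$-norm (that characterizes $\mathcal{B}_0$), nor in the operator norm $RM(1,q)\to RM(1,1)$. The paper proves $g\in\mathcal{B}_{0,w}\Rightarrow T_g$ weakly compact on $RM(1,q)$ directly, via a measure-theoretic exhaustion by sets $A_\varepsilon\subset\T$ on which the Bloch quantity is uniformly small near the boundary. The hard converse $g\notin\mathcal{B}_{0,w}\Rightarrow T_g$ not weakly compact is obtained by showing the associated multiplication operator into $L^1([0,1)\times\T)$ violates uniform integrability (Dunford--Pettis), using a Vitali-type covering of a positive-measure set in $\T$ and the Fefferman--Stein vector maximal inequality; a pointwise sequence $(z_n)$ as you suggest does not suffice, since the failure set lives in $\T$, not in $\D$.

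For part~(3), your idea of building an $\ell^1$-sequence directly in $RM(p,1)$ is plausible and would give a legitimate alternative, but the paper instead works on the dual side: it shows that if $g\notin\mathcal{B}_0$ one can pick $(z_n)$ with $|g'(z_n)|(1-|z_n|)\asymp 1$ and $(1-|z_{n+1}|)/(1-|z_n|)$ decaying fast enough that both $\{(1-|z_n|)^{1+1/p}\delta_{z_n}\}$ and $\{(1-|z_n|)^{2+1/p}\delta'_{z_n}\}$ are equivalent to the $c_0$-basis in $(RM(p,1))^*$; since $T_g^*\delta'_{z_n}=g'(z_n)\delta_{z_n}$, this means $T_g^*$ fixes a copy of $c_0$, hence $T_g$ fixes a copy of $\ell^1$ and is not weakly compact.
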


In order to obtain these results it has been necessary a study of the bidual of the space $RM(p,0)$ in Section 4. The analogous result to the statement (1) in above theorem for $p=+\infty$ had already been proved in \cite{CPPR} by the second author, J.A. Pel\'aez, Ch. Pommerenke, and J. R\"atty\"a. They obtained that the operator $T_g: H^{\infty}\rightarrow H^{\infty}$ is weakly compact if and only if $T_g: H^{\infty}\rightarrow A$ is bounded, where $A$ is the disc algebra, that turns out to be the closure of polynomials in $H^\infty$. 

Statement (2) in Theorem \ref{Thm:introduccion} relies  on a characterization of the weak compactness of a certain operator into $L^{1}([0,1]\times \T)$ that depend on a  classical result of C. Fefferman and E. Stein about the maximal function on $\ell^{p}$-valued functions.

We also point out that integration operators which do not satisfy statement (2)  in Theorem \ref{Thm:introduccion} are also characterized by the property of  fixing a copy of $\ell^{1}$ (see Proposition \ref{Prop: weak-compact-1-q}) whereas operators not satisfying statement (3)  are those whose adjoints fix a copy of $c_{0}$ (see Corollary \ref{Cor:T*fixes c0}). This type of $\ell^{p}$-singularity has been studied in the setting of integration operators on Hardy spaces (see, e.g., \cite{MNST}).

Another important tool for the study of the integration operator, that we present in Section 3, is Littlewood-Paley type inequalities, that is, for $1\leq p< +\infty$, $1\leq q\leq +\infty$ we obtained the estimate 
$$\rho_{p,q}(f)\leq p\cdot\rho_{p,q}(f'(z)(1-|z|))+|f(0)|$$
for $f\in RM(p,q)$ (Proposition~\ref{bergtypine}). For $p=+\infty$, these inequalities are not satisfied. In Subsection \ref{Subsection Converse}, we also prove that a converse inequality holds for the cases  $1< p, q<+\infty$, $(1,q)$ with $1\leq q<+\infty$, and $(\infty,q)$ with $1\leq q\leq+\infty$.

Throughout the paper the letter $C=C(\cdot)$ will denote an absolute constant whose value depends on the parameters indicated in the parenthesis, and may change from one occurrence to another. We will use the notation $a\lesssim b$ if there exists a constant $C=C(\cdot)>0$ such that $a\leq C b$, and $a\gtrsim b$ is understood in an analogous manner. In particular, if $a\lesssim b$ and $a\gtrsim b$, then we will write $a\asymp b$.
If $X$ is a Banach space, we denote its closed unit ball by $B_X$. 
Finally, 
if $p\in[1,+\infty]$, we define the conjugated index $p'$ such that $\frac{1}{p}+\frac{1}{p'}=1$. 

\noindent {\bf Acknowledgments.} We are grateful to Professor Jos\'e \'Angel Pel\'aez   for calling our attention to the paper \cite{Pavlovic} and to Professor  Daniel Girela for some useful comments. 

\section{Definition and first properties}

In this section we recall the definition of the spaces of average radial integrabilty introduced in \cite{Aguilar-Contreras-Piazza} and summarize their main properties for the sake of being self-contained. 
\begin{definition}\label{definitionRM}
	Let $0< p,q \leq +\infty$. We define the spaces of analytic functions
	$$
		RM(p,q)=\{f\in\mathcal{H}(\D)\ :\rho_{p,q}(f)<+\infty\}
	$$
	where
	\begin{equation*}
	\begin{split}
	\rho_ {p,q}(f)&=\left(\frac{1}{2\pi}\int_{0}^{2\pi} \left(\int_{0}^{1} |f(r e^{i t})|^p \ dr \right)^{q/p}dt\right)^{1/q}, \quad \text{ if } p,q<+\infty,\\
	\rho_ {p,\infty}(f)&=\esssup_{t\in[0,2\pi)}\left(\int_{0}^{1} |f(r e^{i t})|^p \ dr \right)^{1/p}, \quad \text{ if } p<+\infty, \\
	\rho_ {\infty,q}(f)&=\left(\frac{1}{2\pi}\int_{0}^{2\pi} \left(\sup_{r\in [0,1)} |f(r e^{i t})| \right)^{q}dt\right)^{1/q},\quad\text{ if } q<+\infty,\\
	\rho_{\infty,\infty}(f)&=\|f\|_{H^{\infty}}=\sup_{z\in\D}|f(z)|.
	\end{split}
	\end{equation*}	
\end{definition}

In the definition of $\rho_{p,\infty}$, the essential supremum can be replaces by the supremum (see \cite[Remark 2.2]{Aguilar-Contreras-Piazza}). It is worth recalling that $RM(p,q)$ endowed with the norm $\rho_{p,q}$ is a Banach space whenever $1\leq p,q\leq +\infty$.
For certain parameters $p,q$ these spaces $RM(p,q)$ are well known spaces. Namely, it is clear that $RM(p,p)$ is nothing but the Bergman space $A^p$, for $1\leq p<+\infty$. In addition, for $1\leq q\leq +\infty$, one can check that $RM(\infty, q) =H^q$.
Another interesting space that fits in this family is the space of bounded radial variation $BRV$, that is the space of analytic functions such that $f'\in RM(1,\infty)$. 

\begin{remark}
	We will also use the notation $\rho_{p,q}(f)$ for measurable functions $f:\D\mapsto \C$, replacing $\sup$ by $\esssup$ in the above definition. For  $1\leq p,q\leq +\infty$ we will denote by $L^q(\mathbb{T},L^p[0,1])$ the spaces of the measurable functions on $\D$ such that $\rho_{p,q}$-norm is finite.
\end{remark}

\begin{definition}
	Let $1\leq p\leq+\infty$. We define the subspace $RM(p,0)$ of $RM(p,\infty)$
	\begin{align*}
	RM(p,0):=\left\{ f\in \mathcal{H}(\D):\lim\limits_{\rho\rightarrow 1} \sup_{\theta} \left(\int_{\rho}^{1} |f(re^{i\theta})|^{p} dr\right)^{1/p}=0\right\}.
	\end{align*} 
\end{definition}

Given a holomorphic function $f$ in the unit disc and $0<r<1$, we define $f_{r}(z):=f(rz)$, for all $z\in \D$.

\begin{proposition}\label{main-properties}
	Let $1\leq p,q \leq+\infty$.  
	\begin{enumerate}
\item \cite[Proposition 2.8]{Aguilar-Contreras-Piazza} If $z\in \D$, then the functional $\delta_z:RM(p,q)\to \C$ given by $\delta_z(f):=f(z)$, for all $f\in RM(p,q)$, is continuous and 
	\begin{align*}
	\| \delta_{z}\|_{(RM(p,q))^{\ast}} \asymp \frac{1}{(1-|z|)^{\frac{1}{p}+\frac{1}{q}}}.
	\end{align*}
\item \cite[Proposition 2.9]{Aguilar-Contreras-Piazza} If $z\in \D$, then the functional $\delta'_z:RM(p,q)\to \C$ given by $\delta_z(f):=f'(z)$, for all $f\in RM(p,q)$, is continuous, 
	\begin{align*}
	\| \delta'_{z}\|_{(RM(p,q))^{\ast}}\asymp \frac{1}{(1-|z|)^{\frac{1}{p}+\frac{1}{q}+1}}\quad \textrm{and hence }\quad \| \delta'_{z}\|_{(RM(p,q))^{\ast}}\asymp \frac{\| \delta_{z}\|_{(RM(p,q))^{\ast}}}{1-|z|}.
	\end{align*}
\item \cite[Corollary 2.17]{Aguilar-Contreras-Piazza} If $z\in\D$, then 
	\begin{align*}
	\|\delta_{z}\|_{(RM(p,0))^{\ast}}=\|\delta_{z}\|_{(RM(p,\infty))^\ast}\quad \mbox{and } \quad \|\delta'_{z}\|_{(RM(p,0))^{\ast}}=\|\delta'_{z}\|_{(RM(p,\infty))^\ast}.
	\end{align*}
\item \cite[Proposition 2.12 and Proposition 2.13]{Aguilar-Contreras-Piazza} Let $q<+\infty$.	If $f\in RM(p,q)$, then $\rho_{p,q}(f-f_{r})\rightarrow 0$ when $r\rightarrow 1^{-}$. Thus, polynomials are dense in $RM(p,q)$.
\item \cite[Proposition 2.15 and Corollary 2.16]{Aguilar-Contreras-Piazza} Let $p <+\infty$ and  $f\in RM(p,\infty)$. Then, $f\in RM(p,0)$ if and only if 
\begin{equation}\label{Eq:propp0unif}
\rho_{p,\infty}(f-f_{r})\rightarrow 0
\end{equation} when $r\rightarrow 1$.
Thus, polynomials are dense in $RM(p,0)$.
\item \cite[Proposition 3.7]{Aguilar-Contreras-Piazza} Let $f\in RM(p,\infty)$ and $\sigma\in \partial \D$. Then, for the non-tangential limit we have $\angle\lim\limits_{z\rightarrow \sigma}  f(z)(1-\overline \sigma z)^{1/p}=0$.
\item The convergence of sequences in the norm topology, implies the convergence uniformly on compact subsets of the unit disc.
	\end{enumerate}
\end{proposition}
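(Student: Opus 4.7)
The items in this proposition collect standard facts about the spaces $RM(p,q)$, and (1)--(6) are quoted from \cite{Aguilar-Contreras-Piazza}. The plan is to sketch how each naturally follows from subharmonicity, Cauchy's formula, and density of dilates.

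For (1) I would obtain the upper bound by subharmonicity: fixing $z\in\D$ and $\rho=(1-|z|)/2$, the mean value inequality applied to $|f|^p$ on the Euclidean disc $D(z,\rho)$ bounds $|f(z)|^p$ by the average of $|f|^p$ over that disc. Writing the area integral in polar coordinates centered at the origin, the radial slice is controlled by $\int_0^1|f(re^{it})|^p\,dr$ for $t$ in an arc of length $\asymp 1-|z|$; Hölder in the angular variable then produces the factor $(1-|z|)^{-1/q}$ and together one gets $|f(z)|\lesssim(1-|z|)^{-1/p-1/q}\rho_{p,q}(f)$. The matching lower bound comes from testing on the functions $k_z(w)=(1-\bar z w)^{-\alpha}$ with $\alpha$ chosen so that a direct mixed-norm computation yields $\rho_{p,q}(k_z)\asymp(1-|z|)^{\alpha-1/p-1/q}$. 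Item (2) then follows from Cauchy's integral formula on the circle of radius $(1-|z|)/2$ around $z$, which costs one extra factor $(1-|z|)^{-1}$, and is saturated by the derivative of the same test kernel.

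For (4), fixing $f\in RM(p,q)$ with $q<\infty$, I would show $\rho_{p,q}(f-f_r)\to 0$ by dominated convergence in $t$: on each ray $\{re^{it}\}$, $\rho_p(f-f_r)(t)\to 0$ by the standard argument for $H^p$-style radial averages, while the pointwise bound $\rho_p(f-f_r)(t)^q\lesssim\rho_p(f)(t)^q\in L^1(\T)$ provides a dominant. Item (5) is similar: the tail definition of $RM(p,0)$ allows one to split $\int_0^1$ at some $\rho$ close to $1$, handling the piece $[0,\rho]$ by uniform continuity of $f$ on $\overline{D(0,\rho)}$ and the piece $[\rho,1]$ by the tail condition, uniformly in $t$. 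Density of polynomials follows in both cases because $f_r$ is bounded on $\overline\D$ and therefore its Taylor partial sums converge uniformly, hence in $\rho_{p,q}$. Item (3) is then a two-step observation: the isometric inclusion $RM(p,0)\hookrightarrow RM(p,\infty)$ gives one inequality, and for the reverse, given $f\in B_{RM(p,\infty)}$ one has $f_r\in RM(p,0)$ with $\rho_{p,\infty}(f_r)\leq\rho_{p,\infty}(f)$, so $\delta_z(f)=\lim_{r\to 1}\delta_z(f_r)$ is already controlled by the norm of $\delta_z$ on $RM(p,0)$, and similarly for $\delta'_z$.

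For (6), I would split $f=f_\rho+(f-f_\rho)$: the dilate $f_\rho$ is bounded on $\overline\D$, so trivially $|f_\rho(z)|(1-|z|)^{1/p}\to 0$ nontangentially; for the remainder, one combines the subharmonicity estimate behind (1) with the observation that $\int_0^1|f(re^{i\theta})|^p\,dr<\infty$ for a.e.\ $\theta$, which forces the local $L^p$ tails controlling $f(z)$ at nontangential $z\to\sigma$ to vanish. Finally (7) is an immediate consequence of (1): for every $R\in(0,1)$,
\[
\sup_{|z|\leq R}|f_n(z)-f(z)|\leq \sup_{|z|\leq R}\|\delta_z\|_{(RM(p,q))^{\ast}}\,\rho_{p,q}(f_n-f)\lesssim (1-R)^{-\frac{1}{p}-\frac{1}{q}}\rho_{p,q}(f_n-f)\longrightarrow 0.
\]
The main obstacle, as usual for mixed-norm spaces, is the careful bookkeeping of the exponents $1/p$ and $1/q$ in the subharmonicity/Hölder step underlying (1); once that is in place, the rest of the proposition is essentially bookkeeping plus approximation by dilates.
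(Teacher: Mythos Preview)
The paper does not give a proof of this proposition: items (1)--(6) are quoted verbatim from \cite{Aguilar-Contreras-Piazza} with explicit citations, and item (7) is stated without proof as an immediate consequence. Your derivation of (7) from (1) is exactly the intended one-line argument, and your sketches for (1)--(6) are reasonable outlines of the standard proofs one would expect in the cited companion paper (subharmonicity and H\"older for the point-evaluation bounds, dilation plus dominated convergence for density). Two small caveats: in your argument for (3) the inequality $\rho_{p,\infty}(f_r)\le\rho_{p,\infty}(f)$ is not quite right---a change of variables gives $\rho_{p,\infty}(f_r)\le r^{-1/p}\rho_{p,\infty}(f)$---but since you pass to the limit $r\to 1$ this does not affect the conclusion; and your sketch for (6) is the vaguest of the lot, since the split $f=f_\rho+(f-f_\rho)$ does \emph{not} make $\rho_{p,\infty}(f-f_\rho)$ small for general $f\in RM(p,\infty)$ (that would force $f\in RM(p,0)$ by item (5)), so the nontangential control of the remainder really does require a localized subharmonicity estimate near $\sigma$ combined with the vanishing of the radial $L^p$-tail on the single ray through $\sigma$, which you gesture at but do not spell out.
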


\begin{proposition}\cite[Proposition 2.5]{Aguilar-Contreras-Piazza}\label{lacunary}
Let $\{n_k\}_{k=0}^{\infty}$ be a lacunary sequence (that is, \linebreak[4] $\inf_k\frac{n_{k+1}}{n_k}>1$) such that $n_{k}\in \N\setminus\{0\}$, $1\leq p<+\infty$ and $1\leq q \leq+ \infty$. Then 
$f(z)=\sum_{k=0}^{\infty} \alpha_{k} z^{n_k}$
belongs to $RM(p,q)$ if and only if
\begin{align*}
\sum_{k=0}^{\infty} \frac{|\alpha_k|^p}{n_k}<+\infty.
\end{align*}
 Moreover, it is satisfied that 
\begin{align*}%\label{equation-lacunary}
\rho_{p,q}(f)\asymp \left(\sum_{k=0}^{\infty} \frac{|a_k|^p}{n_k}\right)^{1/p}.
\end{align*}
\end{proposition}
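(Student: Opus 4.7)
The equivalence reduces, via monotonicity and Paley's lemma for lacunary Fourier series, to a purely radial computation. First, H\"older on $[0,2\pi)$ gives $\rho_{p,1}(f)\le\rho_{p,q}(f)\le\rho_{p,\infty}(f)$, so it suffices to produce a lower bound for $\rho_{p,1}(f)$ and a matching upper bound for $\rho_{p,\infty}(f)$, both in terms of $A:=\bigl(\sum_k|\alpha_k|^p/n_k\bigr)^{1/p}$.

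For the lower bound I would apply Minkowski's integral inequality ($p\ge 1$) to bring the $L^1$-average in $t$ inside the $L^p$ in $r$, and then invoke Paley's inequality
\[
\tfrac{1}{2\pi}\int_0^{2\pi}|f(re^{it})|\,dt\;\asymp\;u(r)^{1/2},\qquad u(r):=\sum_k|\alpha_k|^2 r^{2n_k},
\]
reducing matters to showing $\int_0^1 u(r)^{p/2}\,dr\asymp\sum_k|\alpha_k|^p/n_k$. This last equivalence is obtained by partitioning $[0,1)$ into the annuli $E_k=[1-1/n_k,\,1-1/n_{k+1})$ (of length $\asymp 1/n_k$ by lacunarity): on $E_k$ the factors $r^{2n_j}$ are $\asymp 1$ for $j\le k$ and decay geometrically in $j-k$ for $j>k$, so a discrete Hardy-type estimate adapted to lacunary weights gives both directions.

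The upper bound for $\rho_{p,\infty}(f)$ is the main obstacle, since Paley's inequality controls the angular $L^p$-mean but not the essential supremum in $t$. To overcome this I would partition $[0,1)$ into the dyadic annuli $J_m=[1-2^{-m},1-2^{-m-1})$ and, on each $J_m$, split $f=f_m+g_m$ with $f_m$ carrying the frequencies $n_k\le 2^m$: the tail $g_m$ contributes an exponentially small amount via $r^{n_k}\lesssim \exp(-n_k 2^{-m})$, while the head $f_m$ is a lacunary polynomial with $O(m)$ active terms that can be controlled pointwise in $t$ by combining the triangle inequality with H\"older and the geometric spacing of the frequencies. Summing over $m$ should yield $\rho_{p,\infty}(f)^p\lesssim\int_0^1 u(r)^{p/2}\,dr\asymp A^p$, closing the argument. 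The delicate point is that the naive triangle bound on each dyadic block introduces a logarithmic overhead which must be absorbed by the geometric decay produced at the block boundaries, so the precise packaging of the dyadic argument is what requires care.
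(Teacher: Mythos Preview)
The paper does not prove this proposition; it is quoted from the companion article \cite{Aguilar-Contreras-Piazza}, so there is no argument here to compare against. Your overall plan is sound, and the monotonicity reduction---bounding $\rho_{p,1}$ from below and $\rho_{p,\infty}$ from above---is exactly the right move.

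Your lower bound is clean: Minkowski pushes the $t$-average inside the $L^p_r$, Paley on $\T$ replaces $\frac{1}{2\pi}\int_0^{2\pi}|f(re^{it})|\,dt$ by $u(r)^{1/2}$, and the radial equivalence $\int_0^1 u(r)^{p/2}\,dr\asymp\sum_k|\alpha_k|^p/n_k$ follows from the annuli $[1-1/n_k,1-1/n_{k+1})$ together with a discrete Hardy inequality for the geometrically decaying weights $1/n_k$.

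For the upper bound you are overcomplicating matters. Paley is irrelevant here: the triangle inequality already gives
\[
|f(re^{it})|\le \sum_k |\alpha_k|\, r^{n_k}=:v(r)
\]
\emph{uniformly in $t$}, so $\rho_{p,\infty}(f)^p\le\int_0^1 v(r)^p\,dr$ and the supremum in $t$ costs nothing. The problem is again purely radial: show $\int_0^1 v^p\lesssim\sum_k|\alpha_k|^p/n_k$. Split $v(r)$ into head ($n_k\le 1/(1-r)$) and tail ($n_k>1/(1-r)$). The head is bounded pointwise by $S_{j(r)}=\sum_{k\le j(r)}|\alpha_k|$, and $\sum_j n_j^{-1}S_j^p\lesssim_\lambda\sum_k n_k^{-1}|\alpha_k|^p$ is the discrete Hardy inequality you already invoke (proved by H\"older with weight $n_k^{\theta/p}$, $0<\theta<1$, using that $\sum_{k\le j}n_k^{\theta p'/p}\asymp n_j^{\theta p'/p}$ and $\sum_{j\ge k}n_j^{\theta-1}\asymp n_k^{\theta-1}$). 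The tail should \emph{not} be frozen pointwise on each block---that is precisely where the logarithmic loss you worry about would appear---but integrated in $r$: after H\"older with the convergent factor $\sum_{n_k(1-r)>1}e^{-p'n_k(1-r)/2}\lesssim_\lambda 1$,
\[
\int_0^1\Bigl(\sum_{n_k>1/(1-r)}|\alpha_k|\,r^{n_k}\Bigr)^p dr
\;\lesssim\;\sum_k|\alpha_k|^p\int_0^1 e^{-p n_k(1-r)/2}\,dr
\;\asymp\;\sum_k\frac{|\alpha_k|^p}{n_k}.
\]
So the delicate dyadic packaging you anticipate is unnecessary once you notice that (i) the $t$-supremum is free via $|f|\le v$, and (ii) the tail must be integrated in $r$ rather than bounded on each block.
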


%\begin{theorem}\label{thm:inclusion}
%	Let $1\leq p_0, q_0\leq \infty$ and set 
%	\begin{align*}
%	A(p_0,q_0)=\left\{ (p,q)\in (0,+\infty]\times(0,+\infty] : \frac{1}{p}+\frac{1}{q}\geq \frac{1}{p_0}+\frac{1}{q_0},\quad p_0\geq p\right\}.
%	\end{align*}
%	\begin{enumerate}
%		\item If $p_0<+\infty$, then $RM(p_0,q_0)\subset RM(p,q)$ if and only if $(p,q)\in A(p_0,q_0)\setminus \left\{\left(\beta,\infty\right)\right\}$, where $\beta=\frac{p_0q_0}{p_0+q_0}$ if $q_0<+\infty $ and $(p,q)\in A(p_0,\infty)$ if $q_0=+\infty $.
%		\item $RM(\infty,q_0)\subset RM(p,q)$  if and only if $(p,q)\in A(\infty,q_0)$.
%	\end{enumerate}
%	
%\end{theorem}

\begin{definition}
	The Bloch space $\mathcal{B}$ is the space of analytic functions $f$ such that
	\begin{align*}
	\sup_{z\in\D} (1-|z|^2)|f'(z)|<+\infty
	\end{align*}
	and the little Bloch space, denoted by $\mathcal{B}_0$, is the closed subspace of $\mathcal{B}$ consisting of analytic functions $f$ with 
	\begin{align*}
	\lim_{|z|\to 1}(1-|z|^2)|f'(z)|=0.
	\end{align*}
	The Bloch space $\mathcal{B}$ is a Banach space with norm $\|f\|_{\mathcal{B}}=|f(0)|+\sup_{z\in\D} (1-|z|^2)|f'(z)|$ and the little Bloch $\mathcal{B}_0$ coincides with the closure of polynomials in $\mathcal{B}$.
\end{definition}

\section{Littlewood-Paley type inequalities}

In this section we show the Littlewood-Paley type inequalities associated to $RM(p,q)$ and their converses for certain cases. 

\subsection{Littlewood-Paley type inequalities}

These inequalities will have a great importance in the proof of the fact that the belonging of $g$ to the Bloch space is a sufficient condition for the boundedness of the integration operator $T_g$ acting on $RM(p,q)$.

\begin{lemma}\label{lemmaLPtypeineq}
	Let $1\leq p<+\infty$. If $f\in \mathcal{C}^{1}([0,1))\cap L^p([0,1])$, then it satisfies that
	\begin{align*}
	\left(\int_{0}^{1}|f(x)|^p\ dx\right)^{1/p}\leq p \left(\int_{0}^{1} |f'(x)|^p(1-x)^p\ dx\right)^{1/p}+|f(0)|.
	\end{align*}
\end{lemma}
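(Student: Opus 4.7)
The plan is to integrate the absolute-value version of the fundamental theorem of calculus against $dx$. Since $f\in C^{1}([0,1))$, the composition $t\mapsto |f(t)|^{p}$ is locally absolutely continuous on $[0,1)$ and its (a.e.) derivative satisfies $\bigl|(|f|^{p})'(t)\bigr|\leq p\,|f(t)|^{p-1}|f'(t)|$. Integrating the pointwise bound $|f(x)|^{p}\leq |f(0)|^{p} + p\int_{0}^{x} |f(t)|^{p-1}|f'(t)|\,dt$ over $x\in[0,1]$ and swapping the order of integration by Fubini--Tonelli would yield
\[
\int_{0}^{1}|f(x)|^{p}\,dx \;\leq\; |f(0)|^{p} + p\int_{0}^{1} (1-t)\,|f(t)|^{p-1}|f'(t)|\,dt.
\]
If $\int_{0}^{1}(1-t)^{p}|f'(t)|^{p}\,dt=+\infty$ the statement is trivial, so I may assume it is finite.

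If $p=1$, the display above \emph{is} the desired inequality. If $p>1$, I would apply H\"older's inequality with conjugate exponents $p/(p-1)$ and $p$, grouping $|f(t)|^{p-1}$ against $(1-t)|f'(t)|$. Writing $A:=\bigl(\int_{0}^{1}|f|^{p}\bigr)^{1/p}$ and $B:=\bigl(\int_{0}^{1}(1-t)^{p}|f'(t)|^{p}\bigr)^{1/p}$, this gives $A^{p}\leq |f(0)|^{p} + p\,B\,A^{p-1}$. The closing step is a short algebraic contradiction: if $A>pB+|f(0)|$, then $A>|f(0)|$, forcing $A^{p-1}\geq |f(0)|^{p-1}$, and $A-pB>|f(0)|$, whence
\[
A^{p}-pBA^{p-1} = A^{p-1}(A-pB) > A^{p-1}|f(0)| \geq |f(0)|^{p},
\]
contradicting the H\"older estimate; therefore $A\leq pB+|f(0)|$, which is the claim.

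The main delicate point is justifying the pointwise derivative bound $\bigl|(|f|^{p})'(t)\bigr|\leq p\,|f(t)|^{p-1}|f'(t)|$ a.e.\ when $1\leq p<2$, since $u\mapsto u^{p-1}$ is singular at $0$. This is handled by noting that $|f|$ is locally Lipschitz on $[0,1)$ with $\bigl||f|'(t)\bigr|\leq |f'(t)|$ a.e., and composing with the $C^{1}$ map $u\mapsto u^{p}$ on any bounded subinterval of $[0,+\infty)$; thus $|f|^{p}$ is locally absolutely continuous on $[0,1)$, with $(|f|^{p})'=0$ a.e.\ on $\{t:f(t)=0\}$. The bound then holds a.e.\ under the convention $0^{p-1}\cdot 0=0$, and the argument above closes uniformly for all $1\leq p<+\infty$.
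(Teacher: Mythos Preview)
Your proof is correct, but it follows a genuinely different route from the paper's. The paper first reduces to $f(0)=0$ by subtracting the constant, handles $p=1$ by the same Fubini computation you do, and for $p>1$ proceeds \emph{operator-theoretically}: it introduces $Rg(x)=\int_{0}^{x}\frac{g(t)}{1-t}\,dt$ and shows $\|Rg\|_{L^{p}}\le p\,\|g\|_{L^{p}}$ by duality, because the adjoint is the averaging operator $Ch(t)=\frac{1}{1-t}\int_{t}^{1}h(x)\,dx$, which is bounded on $L^{p'}$ with norm at most $p$ by Hardy's inequality. Applying $R$ to $g(x)=f'(x)(1-x)$ then gives the result directly. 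Your approach instead differentiates $|f|^{p}$, integrates and uses Fubini to obtain $A^{p}\le|f(0)|^{p}+pBA^{p-1}$, and closes with the algebraic implication $A^{p}\le c^{p}+pBA^{p-1}\Rightarrow A\le pB+c$. Your argument is more self-contained (no appeal to Hardy's inequality) and treats $f(0)\ne 0$ in one stroke; the paper's argument, on the other hand, situates the lemma as a direct consequence of the classical $L^{p}$ boundedness of the Hardy averaging operator, which makes the appearance of the sharp constant $p$ transparent and connects the estimate to a well-known inequality.
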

\begin{proof}
Without lost of generality we can assume that $f(0)=0$. To prove this inequality we need study first the case $p=1$. We have that
\begin{align*}
\int_{0}^{1}|f(x)|\ dx=\int_{0}^{1}\left|\int_{0}^{x} f'(t)\ dt\right|\ dx\leq \int_{0}^{1}\int_{0}^{x} |f'(t)|\ dt\ dx
\end{align*}
Applying Fubini's theorem it follows that
\begin{align*}
\int_{0}^{1}|f(x)|\ dx\leq \int_{0}^{1} |f'(t)|\left(\int_{t}^{1} \ dx\right)\ dt=\int_{0}^{1} |f'(t)| (1-t)\, dt.
\end{align*}
To prove such inequality for $p>1$, we prove that the operator 
\begin{align*}
g\mapsto Rg(x)=\int_{0}^{x} \frac{g(t)}{1-t}\ dt
\end{align*}
is bounded in $L^p([0,1])$. Notice that $\|R{g}\|_{L^p([0,1])}=\sup_{h\in B_{L^{p'}([0,1])}}{|<Rg,h>|}$ and
\begin{align*}
<Rg,h>&=\int_{0}^{1}\left(\int_{0}^{x}\frac{g(t)}{1-t}\ dt\right)h(x)\ dx =\int_{0}^{1}g(t) \left(\frac{1}{1-t}\int_{t}^{1}h(x)\ dx\right)\ dt.
\end{align*}
Let $Ch(t):=\frac{1}{1-t}\int_{t}^{1}h(x)\ dx$ for an integrable function $h$. Using \cite[Exercise 14, p. 72]{rudin_real_1987} with the function $h(1-t)\chi_{[0,1]}(t)$, $t\in (0,\infty)$, we have that
\begin{align*}
\left(\int_{0}^{1}\left|\frac{1}{x}\int_{0}^{x} h(1-t)\, dt\right|^{p'}\,dx \right)^{1/p'}\leq \left(\int_{0}^{\infty}\left|\frac{1}{x}\int_{0}^{x} h(1-t)\chi_{[0,1]}(t) \, dt\right|^{p'}  \, dx\right)^{1/p'}\leq p \|h\|_{L^{p'}}.
\end{align*}

Doing a change of variable, we have that $\|Ch\|_{L^{p'}([0,1])}\leq p \|h \|_{L^{p'}([0,1])}$ and therefore, by duality, $||Rg ||_{L^{p}([0,1])} \leq p ||g ||_{L^{p}([0,1])}$. Taking $g(x)=f'(x)(1-x)$, $x\in [0,1]$, we conclude
$$\|f\|_{L^{p}([0,1])}\leq p \|f'(x)(1-x)\|_{L^{p}([0,1])}.$$
\end{proof}
\begin{proposition}\label{bergtypine}
	Let $1\leq p <+\infty$, $1\leq q\leq +\infty$. If $f\in RM(p,q)$ then we have
	\begin{align}\label{ineqlittlepaleyineq}
	\rho_{p,q}(f)\leq p\ \rho_{p,q}(f'(z)(1-|z|))+|f(0)|.
	\end{align}
\end{proposition}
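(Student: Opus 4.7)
The strategy is immediate from the previous lemma: apply Lemma \ref{lemmaLPtypeineq} radially, for each fixed argument $t\in[0,2\pi)$, and then take the $L^q$-norm in $t$ on both sides.

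More concretely, I would fix $t\in[0,2\pi)$ and consider the function $h_t(r):=f(re^{it})$ for $r\in[0,1)$. Since $f$ is holomorphic on $\D$, $h_t\in\mathcal{C}^1([0,1))$, and $h_t'(r)=e^{it}f'(re^{it})$, so $|h_t'(r)|=|f'(re^{it})|$. For almost every $t$, the function $h_t$ also belongs to $L^p([0,1])$ (this follows because $f\in RM(p,q)$ forces the inner radial integral to be finite a.e.\ in $t$). Lemma \ref{lemmaLPtypeineq} therefore applies and yields, for a.e.\ $t$,
\begin{equation}\label{Eq:radialLP}
\left(\int_0^1 |f(re^{it})|^p\,dr\right)^{1/p}\leq p\left(\int_0^1 |f'(re^{it})|^p(1-r)^p\,dr\right)^{1/p}+|f(0)|.
\end{equation}

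Next, I would take the $L^q(dt/2\pi)$ norm of both sides. For $1\leq q<+\infty$, applying Minkowski's inequality for $L^q$ to the right-hand side of \eqref{Eq:radialLP} gives
\begin{align*}
\rho_{p,q}(f)&=\left\|\left(\int_0^1 |f(re^{i\cdot})|^p\,dr\right)^{1/p}\right\|_{L^q(dt/2\pi)}\\
&\leq p\left\|\left(\int_0^1 |f'(re^{i\cdot})|^p(1-r)^p\,dr\right)^{1/p}\right\|_{L^q(dt/2\pi)}+|f(0)|\\
&=p\,\rho_{p,q}\!\bigl(f'(z)(1-|z|)\bigr)+|f(0)|,
\end{align*}
which is the desired inequality. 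The case $q=+\infty$ is handled identically, replacing the $L^q$-norm by the essential supremum and using that $\operatorname{esssup}(A+B)\leq \operatorname{esssup} A+\operatorname{esssup} B$.

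Both steps are essentially routine: the substantive work is already encapsulated in Lemma \ref{lemmaLPtypeineq}, whose proof uses the duality argument via the Hardy-type operator $Ch(t)=\frac{1}{1-t}\int_t^1 h(x)\,dx$. The only subtlety in the present proposition is the measurability and almost-everywhere applicability of \eqref{Eq:radialLP} in $t$, which is immediate from Fubini and the fact that $\rho_{p,q}(f)<+\infty$ forces $r\mapsto f(re^{it})\in L^p([0,1])$ for a.e.\ $t$; hence I do not anticipate any real obstacle.
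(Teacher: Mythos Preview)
Your proposal is correct and matches the paper's own proof, which simply reads ``The result follows using Lemma~\ref{lemmaLPtypeineq} and taking the $L^q([0,2\pi])$-norm.'' You have merely spelled out the details of that one-line argument.
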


\begin{proof}
The result follows using Lemma~\ref{lemmaLPtypeineq} and taking the $L^q([0,2\pi])$-norm.
\end{proof}

\begin{remark}Notice that Littlewood-Paley type inequalities are not true for the cases where $p=\infty$, that is, for Hardy spaces. This can be seen through lacunary series.	Assume that $\rho_{\infty,q}(f)\lesssim\ \rho_{\infty,q}(f'(z)(1-|z|))+|f(0)|$. Taking the lacunary series
	\begin{align*}
	f(z)=\sum_{k=0}^{\infty} z^{2^k}
	\end{align*}
	we would have 
	\begin{align*}
	\|f\|_{H^{q}} \asymp \rho_{\infty,q}(f)\lesssim \rho_{\infty,q}(f'(z)(1-|z|))\leq \|f\|_{\mathcal{B}}.
	\end{align*}
	But, this is impossible since $f$ belongs to the Bloch space $\mathcal{B}$ but not to Hardy space $H^q$ (see \cite[p. 241]{duren_theory_2000}).  \newline
\end{remark}

Next, we will show the version of the Littlewood-Paley type inequalities for the spaces $RM(p,0)$. We need a preliminary result that will be used to show a sufficient condition on the functions of $RM(p,\infty)$ to belong to the subspace $RM(p,0)$. 

\begin{proposition}\label{berginrp0}
	Let $1\leq p <+\infty$. If $f\in\mathcal{H}(\D)$ we have
	\begin{align*}
	\left(\int_{\rho}^{1} |f(re^{i\theta})|^{p}\ dr \right)^{1/p}&\leq p \left(\int_{\rho}^{1} |f'(re^{i\theta})|^{p}(1-r)^{p}\ dr \right)^{1/p} +  (1-\rho)^{1/p}|f(\rho e^{i\theta})|
	\end{align*}
	for all $\rho\in [0,1)$ and all $\theta\in [0,2\pi]$.
\end{proposition}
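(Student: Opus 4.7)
The natural strategy is to reduce the assertion to Lemma~\ref{lemmaLPtypeineq} by an affine rescaling of the radial segment. Fix $\theta\in[0,2\pi]$ and $\rho\in[0,1)$ and set
$$F(t):=f\bigl((\rho+t(1-\rho))e^{i\theta}\bigr),\qquad t\in[0,1).$$
Since $f$ is holomorphic on $\D$ and $\rho+t(1-\rho)\in[\rho,1)$, $F$ is of class $\mathcal{C}^1$ on $[0,1)$, with $F'(t)=(1-\rho)e^{i\theta}f'\bigl((\rho+t(1-\rho))e^{i\theta}\bigr)$.

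The plan is to apply Lemma~\ref{lemmaLPtypeineq} to $F$ and then perform the substitution $r=\rho+t(1-\rho)$ in the resulting inequality. Under this change of variable $dr=(1-\rho)\,dt$, $1-t=(1-r)/(1-\rho)$, and $|F(0)|=|f(\rho e^{i\theta})|$. The Jacobian contributes a factor $(1-\rho)^{-1/p}$ to each of the two $L^p$ norms, while in the weighted term the $(1-\rho)^p$ arising from $|F'(t)|^p$ and the $(1-\rho)^{-p}$ arising from $(1-t)^p$ cancel exactly, so the integrand becomes $|f'(re^{i\theta})|^p(1-r)^p$. Multiplying the resulting inequality through by $(1-\rho)^{1/p}$ then gives the statement, with the boundary term becoming $(1-\rho)^{1/p}|f(\rho e^{i\theta})|$ as required.

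The only point that requires a bit of care — and the main potential obstacle — is that Lemma~\ref{lemmaLPtypeineq} presupposes $F\in L^p([0,1])$, which may fail a priori. This can be handled either by observing that the core ingredient of its proof, the $L^p$-boundedness with norm $\le p$ of the Hardy-type operator $Rg(x)=\int_0^x g(t)/(1-t)\,dt$ together with the pointwise estimate $|F(x)|\le|F(0)|+R(|F'|(1-\cdot))(x)$, remains valid as an inequality in $[0,+\infty]$ without any integrability hypothesis; or, alternatively, by applying the lemma to $F$ restricted to $[0,1-\delta]$ (after rescaling to $[0,1]$) and letting $\delta\to 0^+$ via monotone convergence. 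Either route adds nothing beyond what is already used in the proof of Lemma~\ref{lemmaLPtypeineq}.
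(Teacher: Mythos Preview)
Your proposal is correct and follows essentially the same route as the paper: the paper's proof also sets $g(r)=f((r+(1-r)\rho)e^{i\theta})$ (which equals your $F(r)$, since $r+(1-r)\rho=\rho+r(1-\rho)$), applies Lemma~\ref{lemmaLPtypeineq}, and then performs the same change of variable. Your extra paragraph addressing the $L^p$-integrability hypothesis is a legitimate point that the paper's proof simply passes over in silence.
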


\begin{proof}Fix $\rho\in [0,1)$.
	Consider the function $g(r)=f((r+(1-r)\rho)e^{i\theta})$ and apply Lemma~\ref{lemmaLPtypeineq},
	\begin{align*}
	&\left(\int_{0}^{1}|f((r+(1-r)\rho)e^{i\theta})|^{p}\ dr\right)^{1/p} \\
	&\leq  p \left(\int_{0}^{1} |f'((r+(1-r)\rho)e^{i\theta})|^{p} (1-\rho)^{p}(1-r)^{p}\ dr\right)^{1/p}+|f(\rho e^{i\theta})|.
	\end{align*}
	Using the change of variable $u=r+(1-r)\rho$ we obtain 
	\begin{align*}
\left(\int_{\rho}^{1}|f(ue^{i\theta})|^{p} \ \frac{du}{1-\rho}\right)^{1/p} \leq  p\left(\int_{\rho}^{1} |f'(u e^{i\theta})|^{p} (1-u)^{p}\ \frac{du}{1-\rho}\right)^{1/p}+|f(\rho e^{i\theta})|.
	\end{align*}
	Therefore, we have concluded the proof.
\end{proof}

%The following proposition will be an important ingredient to prove that the belonging of $g$ to the little Bloch space is not a necessary condition for the containment $T_g( RM(p,\infty))\subset RM(p,0)$. 

\begin{proposition}\label{rhoLPEQ}
	Let $1\leq p <+\infty$. If $f\in \mathcal{H}(\D)$ and satisfies
	\begin{align}\label{dernboundsmall}
	\lim\limits_{\rho\rightarrow 1^{-}}\sup_{\theta \in [0,2\pi]} \left(\int_{\rho}^{1}|f'(re^{i\theta})|^{p} (1-r)^{p}\ dr\right)^{1/p}=0
	\end{align}
	then $\lim\limits_{\rho\rightarrow 1^{-}} \sup_{\theta}\ (1-\rho)^{1/p}|f(\rho e^{i\theta})|=0$ and $f\in RM(p,0)$.
\end{proposition}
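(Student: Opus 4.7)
The plan is to first establish the uniform radial vanishing $\lim_{\rho\to 1^-}\sup_\theta (1-\rho)^{1/p}|f(\rho e^{i\theta})|=0$, and then extract $f\in RM(p,0)$ directly from Proposition~\ref{berginrp0}. Indeed, once the radial vanishing is in hand, taking $\sup_\theta$ in the inequality of Proposition~\ref{berginrp0} yields
\[
\sup_\theta \left(\int_\rho^1|f(re^{i\theta})|^p\, dr\right)^{1/p}\leq p\sup_\theta\left(\int_\rho^1|f'(re^{i\theta})|^p(1-r)^p\, dr\right)^{1/p} + \sup_\theta (1-\rho)^{1/p}|f(\rho e^{i\theta})|,
\]
whose right-hand side tends to $0$ by hypothesis~\eqref{dernboundsmall} and by the radial-vanishing claim; this is precisely what it means for $f$ to lie in $RM(p,0)$.

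For the radial vanishing, my approach is to exploit the elementary representation $f(\rho e^{i\theta})=f(0)+\int_0^\rho f'(te^{i\theta})e^{i\theta}\, dt$. I fix an auxiliary radius $\rho_0\in(0,1)$ and restrict to $\rho\in(\rho_0,1)$, splitting the integral at $\rho_0$. The inner contribution $|f(0)|+\int_0^{\rho_0}|f'(te^{i\theta})|\, dt$ is bounded by some constant $K(f,\rho_0)$ uniformly in $\theta$, because $f'$ is continuous, and hence bounded, on the compact disc $\overline{\D(0,\rho_0)}$; multiplied by $(1-\rho)^{1/p}$ this part vanishes uniformly in $\theta$ as $\rho\to 1^-$.

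The decisive step is the tail $\int_{\rho_0}^\rho |f'(te^{i\theta})|\, dt$. Writing $|f'(te^{i\theta})|=|f'(te^{i\theta})|(1-t)\cdot (1-t)^{-1}$ and applying H\"older's inequality with exponents $p,p'$ when $p>1$ (the elementary estimate $\int_{\rho_0}^\rho (1-t)^{-p'}\,dt\leq C(p)(1-\rho)^{-(p'-1)}$ together with the identity $(p'-1)/p'=1/p$ does the job), or simply bounding $(1-t)^{-1}\leq (1-\rho)^{-1}$ on $[\rho_0,\rho]$ when $p=1$, I arrive in both cases at
\[
\int_{\rho_0}^\rho |f'(te^{i\theta})|\, dt\leq \frac{C(p)}{(1-\rho)^{1/p}}\left(\int_{\rho_0}^1|f'(te^{i\theta})|^p(1-t)^p\, dt\right)^{1/p}.
\]
Multiplying by $(1-\rho)^{1/p}$, taking $\sup_\theta$, and letting $\rho\to 1^-$ yields
\[
\limsup_{\rho\to 1^-}\sup_\theta (1-\rho)^{1/p}|f(\rho e^{i\theta})|\leq C(p)\sup_\theta\left(\int_{\rho_0}^1|f'(te^{i\theta})|^p(1-t)^p\, dt\right)^{1/p}.
\]
Since the left-hand side is independent of $\rho_0$, letting $\rho_0\to 1^-$ and invoking hypothesis~\eqref{dernboundsmall} forces the limsup to be $0$, finishing the main claim. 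I do not expect any substantial obstacle; the only mild care needed is the $p=1$ versus $p>1$ case split in the H\"older step, and verifying that the bound on the inner integral is truly uniform in~$\theta$.
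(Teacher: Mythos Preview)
Your proof is correct and follows essentially the same strategy as the paper: represent $f(\rho e^{i\theta})$ as an integral of $f'$ along the radius, split at an intermediate point, control the tail via H\"older (with the $p=1$ case handled separately), and then conclude $f\in RM(p,0)$ from Proposition~\ref{berginrp0}.

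The one noteworthy difference is in how the inner piece is handled. The paper first deduces $f\in RM(p,\infty)$ from the hypothesis and Proposition~\ref{bergtypine}, then invokes the pointwise derivative bound $|f'(z)|\lesssim(1-|z|)^{-1-1/p}$ of Proposition~\ref{main-properties}(2); this forces the splitting point to move with $\rho$ (they take $\rho_1=1-\sqrt{1-\rho}$) so that both pieces go to zero simultaneously. Your argument is more elementary: you bound the inner integral by $\max_{|z|\le\rho_0}|f'(z)|$ using only continuity of $f'$ on compacta, keep $\rho_0$ fixed while sending $\rho\to 1^-$, and then let $\rho_0\to 1^-$ at the end. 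This double-limit trick avoids both the appeal to Proposition~\ref{main-properties}(2) and the need to first place $f$ in $RM(p,\infty)$, at no real cost.
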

\begin{proof}
	If $f\in \mathcal{H}(\D)$ and satisfies \eqref{dernboundsmall}, then we clearly obtain that
	\begin{align*}
\sup_{\theta \in [0,2\pi]} \left(\int_{0}^{1}|f'(re^{i\theta})|^{p} (1-r)^{p}\ dr\right)^{1/p}<+\infty.
	\end{align*}
	Using Proposition~\ref{bergtypine} it follows that $\rho_{p,\infty}(f)<+\infty$, that is, $f\in RM(p,\infty)$.  
	So, without loss of generality we assume that $f(0)=0$ and $\rho_{p,\infty}(f)=1$. We observe that
	\begin{align}\label{desp0}
	(1-\rho)^{1/p}|f(\rho e^{i\theta})|=(1-\rho)^{1/p}\left|\int_{0}^{\rho} f'(te^{i\theta})e^{i\theta}\ dt\right|\leq (1-\rho)^{1/p} \int_{0}^{\rho} |f'(te^{i\theta})|\ dt.
	\end{align}
	
	Let us see that $\lim\limits_{\rho\rightarrow 1^{-}} \sup_{\theta}  (1-\rho)^{1/p} \int_{0}^{\rho} |f'(te^{i\theta})|\, dt=0$. Fix $0<\rho_1<\rho$. 
	Using that $|f'(z)|\leq \frac{C}{(1-|z|)^{1+\frac{1}{p}}}$ for all $z$ and for some constant $C=C(p)>0$ (Proposition~\ref{main-properties}(2)) we obtain that
	\begin{align*}
	\int_{0}^{\rho}|f'(te^{i\theta})|(1-\rho)^{1/p}\ dt &\leq C \frac{(1-\rho)^{1/p}}{(1-\rho_1)^{1/p}}\int_{0}^{\rho_1}\frac{(1-\rho_1)^{1/p}}{(1-t)^{1+\frac{1}{p}}}\ dt+\int_{\rho_1}^{\rho} |f'(te^{i\theta})|(1-\rho)^{1/p}\ dt\\
	&\leq C p\frac{(1-\rho)^{1/p}}{(1-\rho_1)^{1/p}}+\int_{\rho_1}^{\rho} |f'(te^{i\theta})|(1-t)\frac{(1-\rho)^{1/p}}{(1-t)}\ dt.
	\end{align*}
	
	If $p=1$
	\begin{align*}
	\int_{0}^{\rho}|f'(te^{i\theta})|(1-\rho)\ dt&\leq C  \frac{1-\rho}{1-\rho_1}+\int_{\rho_1}^{\rho} |f'(te^{i\theta})| (1-t)\ dt\\
	&\leq C  \frac{1-\rho}{1-\rho_1}+\int_{\rho_1}^{1} |f'(te^{i\theta})| (1-t)\ dt.
	\end{align*}
	Using Hölder's inequality for $1<p<+\infty$, we obtain
	\begin{align*}
	&\int_{0}^{\rho}|f'(te^{i\theta})|(1-\rho)^{1/p}\ dt\\
	&\leq Cp \frac{(1-\rho)^{1/p}}{(1-\rho_1)^{1/p}}+ \left( \int_{\rho_{1}}^{\rho}|f'(te^{i\theta})|^{p}(1-t)^{p}\ dt\right)^{1/p}\left(\int_{\rho_1}^{\rho} \frac{(1-\rho)^{p'/p}}{(1-t)^{p'}}\ dt\right)^{1/p'}\\
	&\leq  Cp \frac{(1-\rho)^{1/p}}{(1-\rho_1)^{1/p}}+\left( \int_{\rho_{1}}^{1}|f'(te^{i\theta})|^{p}(1-t)^{p}\ dt\right)^{1/p}\frac{1}{(p'-1)^{1/p'}}.
	\end{align*}
	
	Now, taking supremum with respect to $\theta$ in the previous inequalities and considering $\rho_1=1-\sqrt{1-\rho}$, we obtain that 
	\begin{align*}
	\lim\limits_{\rho\rightarrow 1^{-}} \sup_{\theta} \int_{0}^{\rho}|f'(te^{i\theta})|(1-\rho)^{1/p}\ dt=0.
	\end{align*}
	Therefore, bearing in mind \eqref{desp0} it follows that $\lim\limits_{\rho\rightarrow 1^{-}} \sup_{\theta} \ (1-\rho)^{1/p}|f(\rho e^{i\theta})|=0$ and by means of Proposition~\ref{berginrp0} we conclude that $f\in RM(p,0)$.
\end{proof}

%Above  proposition will be an important ingredient to prove that the belonging of $g$ to the little Bloch space is not a necessary condition for the containment $T_g( RM(p,\infty))\subset RM(p,0)$.

\subsection{Converse Littlewood-Paley type inequalities} \label{Subsection Converse}
Now, we tackle the converse Littlewood-Paley inequality for the cases $1< p, q<+\infty$, $(1,q)$ with $1\leq q<+\infty$, and $(\infty,q)$ with $1\leq q\leq+\infty$.
 %$1\leq p<+\infty$, $1\leq q\leq +\infty$.
%$1\leq p\leq q<+\infty$ or $1<q\leq p	\leq  +\infty$ or $p=+\infty$, $1\leq q\leq +\infty$. 
Firstly, we recall the Luecking regions and their expanded regions.

\begin{definition}
	Given a non-negative integer $n$, set
	\begin{align*}
	\Gamma_{n}=\left\{z\in\D\ :\ 1-\frac{1}{2^{n}}\leq |z|<1-\frac{1}{2^{n+1}}\right\}.
	\end{align*}
	We define the \emph{Luecking regions} $R_{n,j}$ as follows
	\begin{align*}
	R_{n,j}=\left\{z\in\Gamma_{n}\ :\ \arg(z)\in\left[\frac{2\pi j}{2^{n}},\frac{2\pi (j+1)}{2^{n}}\right)\right\},\quad j=0,1,\dots, 2^{n}-1.
	\end{align*}

	\begin{center}
	\begin{tikzpicture}[scale=3]
	\draw (0,0) circle (1/2);
	\draw (0,0) node[font=\fontsize{10}{0}\selectfont] {$R_{0,0}$};
	\foreach \k in {2,...,6}
		{
		\pgfmathsetmacro{\b}{2^-\k};
		\pgfmathsetmacro{\c}{2^\k/2};
		\draw(0,0) circle (1-\b);
		\foreach \m in {0,...,\c}
			{
			\draw (360.*\m*\b*2:{1-2*\b}) -- (360.*\m*\b*2:{1-\b});
			}
		}
	\foreach \k in {2,...,3}
	{
		\pgfmathsetmacro{\b}{2^-\k};
		\pgfmathsetmacro{\c}{2^\k/2-1};
		\foreach \m in {0,...,\c}
		{
			\pgfmathsetmacro{\x}{int(\k-1)};
			\pgfmathsetmacro{\y}{\m};			
			\draw (360.*\b+360.*\m*\b*2:{1-3*\b/2}) node[font=\fontsize{10}{0}\selectfont] {$R_{\x,\y}$};
		}
	}
	\end{tikzpicture}
	\end{center}

	The \emph{expanded Luecking region} $\tilde{R}_{n,j}$ is the union of $R_{n,j}$ and all regions contiguous with it. That is,  $$\tilde{R}_{n,j}= \bigcup_{\partial R_{n,j}\cap \partial R_{m,k}\neq \emptyset} R_{m,k}.$$

\end{definition}

Now, we state some properties of these regions that will be useful for the estimation of the norm of certain maximal operators over these regions. The proof of these facts will be left to the reader.

\begin{lemma}\label{lemmaNC}
	Let $z\in R_{n,j}$, then $D\left(z,\frac{1-|z|}{2}\right)\subset \tilde{R}_{n,j}$.
\end{lemma}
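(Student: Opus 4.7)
The plan is to decompose the condition $w\in D(z,(1-|z|)/2)$ into separate radial and angular estimates, and then to check that every pair $(|w|,\arg w)$ compatible with those bounds lies in a region sharing at least one boundary point with $R_{n,j}$. The case $n=0$ is immediate, since $|w|\leq(1+|z|)/2<3/4$ places $w$ in $\Gamma_0\cup\Gamma_1=\tilde R_{0,0}$; I therefore assume $n\geq 1$, in which case $|z|\geq 1/2$.

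For the radial estimate, $z\in R_{n,j}$ gives $1-|z|\in(2^{-(n+1)},2^{-n}]$, and the reverse triangle inequality applied to $w\in D(z,(1-|z|)/2)$ yields
\[
1-|w|\in\Bigl[\tfrac{1-|z|}{2},\tfrac{3(1-|z|)}{2}\Bigr]\subset\bigl(2^{-(n+2)},\,3\cdot 2^{-(n+1)}\bigr].
\]
Since $3\cdot 2^{-(n+1)}<2^{-(n-1)}$, this forces $|w|\in\Gamma_{n-1}\cup\Gamma_n\cup\Gamma_{n+1}$.

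For the angular estimate I introduce the auxiliary point $w'=|z|e^{i\arg w}$ on the circle $\{|\zeta|=|z|\}$. Then $|w-w'|=\bigl||z|-|w|\bigr|\leq|z-w|$, whereas $|z-w'|=2|z|\bigl|\sin\bigl((\arg z-\arg w)/2\bigr)\bigr|$. The triangle inequality gives $|z-w'|\leq 2|z-w|\leq 1-|z|$, so, using $|z|\geq 1/2$,
\[
\bigl|\sin\bigl(\tfrac{1}{2}(\arg z-\arg w)\bigr)\bigr|\leq\frac{1-|z|}{2|z|}\leq 1-|z|\leq 2^{-n}.
\]
Jordan's inequality $\sin x\geq 2x/\pi$ on $[0,\pi/2]$ then upgrades this to $|\arg z-\arg w|\leq\pi\cdot 2^{-n}$, that is, at most half the angular width $2\pi/2^n$ of $R_{n,j}$.

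Combining the two bounds, $w$ lies in some $R_{m,k}$ with $m\in\{n-1,n,n+1\}$ and with $\arg w$ within $\pi/2^n$ of the angular range $[2\pi j/2^n,2\pi(j+1)/2^n)$. A direct enumeration then shows that the only candidates are $R_{n,j-1},R_{n,j},R_{n,j+1}$ in $\Gamma_n$, the four regions $R_{n+1,k}$ with $k\in\{2j-1,2j,2j+1,2j+2\}$ in $\Gamma_{n+1}$, and at most two consecutive regions around $R_{n-1,\lfloor j/2\rfloor}$ in $\Gamma_{n-1}$; in each case $\partial R_{n,j}\cap\partial R_{m,k}\neq\emptyset$, at the very least at a corner point on $|\zeta|=1-2^{-n}$ or $|\zeta|=1-2^{-(n+1)}$, so $R_{m,k}\subset\tilde R_{n,j}$. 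The main obstacle is precisely this last bookkeeping step: one has to verify that the ``diagonal'' neighbours in $\Gamma_{n\pm 1}$, which touch $R_{n,j}$ only at a single point, do qualify as members of $\tilde R_{n,j}$ under the stated boundary-intersection convention, and to track the doubling/halving of angular subdivisions between adjacent rings carefully enough that no farther-out region is missed.
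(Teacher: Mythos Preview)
Your argument is correct. The paper itself omits the proof entirely (it states just before the lemma that ``the proof of these facts will be left to the reader''), so there is nothing to compare against; your approach---separating radial and angular displacement and then enumerating the at most nine candidate regions---is exactly the elementary verification one expects here.

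Two minor comments. First, the closing paragraph reads as if the enumeration were still unresolved, but in fact everything you need is already in hand: the angular window of width $\pi/2^{n}$ on each side of $R_{n,j}$ hits at most $R_{n,j\pm 1}$ in $\Gamma_{n}$, at most $R_{n+1,k}$ with $k\in\{2j-1,2j,2j+1,2j+2\}$ in $\Gamma_{n+1}$, and at most $R_{n-1,\lfloor j/2\rfloor}$ together with one adjacent region in $\Gamma_{n-1}$; each of these shares at least the corner $(1-2^{-n})e^{2\pi ij/2^{n}}$ or $(1-2^{-(n+1)})e^{2\pi i(j+1)/2^{n}}$ with $\partial R_{n,j}$, so the boundary-intersection criterion in the definition of $\tilde R_{n,j}$ is met. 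You should simply write this out rather than label it an ``obstacle''. Second, this count (three in $\Gamma_{n}$, four in $\Gamma_{n+1}$, two in $\Gamma_{n-1}$) is exactly the $NC(R_{n,j})=9$ recorded in the paper's Lemma~\ref{remarkNC}, which gives an independent sanity check.
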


\begin{lemma}\label{remarkNC}
	Denote by $NC(R_{n,j})$ the number of regions $R_{m,k}$ such that $\partial R_{n,j}\cap \partial R_{m,k}\neq \emptyset$. Then,
	\begin{itemize}
		\item $NC(R_{0,0})=3$;
		\item $NC(R_{1,j})=7$, $j=0,1$;
		\item If $n\geq 2$, then $NC(R_{n,j})=9$, $j=0,\dots, 2^{n}-1$.
		\end{itemize}
	In particular, $NC(R_{n,j})\leq 9$.
\end{lemma}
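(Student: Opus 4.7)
The plan is to prove this by a direct geometric count. Since each $R_{n,j}$ is a polar rectangle of angular width $2\pi/2^n$ on the annulus $\Gamma_n$, neighbors can only come from (a) the same annulus $\Gamma_n$, (b) the outer annulus $\Gamma_{n+1}$, (c) the inner annulus $\Gamma_{n-1}$ (when $n\geq 1$), and (d) $R_{n,j}$ itself, since a set trivially shares its boundary with itself. I would organize the argument by counting neighbors from each of these four sources, keeping careful track of corner contacts (which contribute to the intersection of boundaries even though they contribute nothing in measure).

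For the generic case $n\geq 2$ I would first observe that within $\Gamma_n$ the two angular neighbors $R_{n,(j-1)\bmod 2^n}$ and $R_{n,(j+1)\bmod 2^n}$ are distinct, giving $2$ neighbors. For the outer annulus, since $\Gamma_{n+1}$ is split into twice as many angular pieces, the outer arc of $R_{n,j}$ (at $|z|=1-1/2^{n+1}$) meets exactly $R_{n+1,2j}$ and $R_{n+1,2j+1}$ along an arc, and the two outer corners of $R_{n,j}$ additionally touch $R_{n+1,2j-1}$ and $R_{n+1,2j+2}$ at a point; total $4$. For the inner annulus, the inner arc of $R_{n,j}$ lies inside the outer arc of $R_{n-1,\lfloor j/2\rfloor}$, and exactly one of the two angular boundaries of $R_{n,j}$ coincides with an angular boundary in $\Gamma_{n-1}$ (the one with even index when $j=2k$, or odd index when $j=2k+1$), producing one extra corner contact with a sector of $\Gamma_{n-1}$; total $2$. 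Adding the count for itself gives $1+2+2+4=9$.

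For the boundary cases I would verify the drop‑off explicitly: when $n=1$ the inner annulus $\Gamma_0$ has only the single sector $R_{0,0}$, so both "inner'' neighbors collapse into one, and the two same‑annulus neighbors $R_{1,0}$ and $R_{1,1}$ are the same region (since $2^1=2$), so that count also collapses to one; we obtain $1+1+1+4=7$. When $n=0$ there is no inner annulus, no same‑annulus neighbor, and the outer annulus $\Gamma_1$ contains only the two sectors $R_{1,0}, R_{1,1}$, both of which meet $\partial R_{0,0}$ along the circle $|z|=1/2$; this gives $1+2=3$.

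The only subtle point, which I expect to be the main obstacle to presentation (not to the argument), is making sure one counts the point‑corner contacts consistently: it is easy to either double‑count sectors that already appear as arc‑neighbors or to forget the diagonal touches. I would therefore present the count by fixing a reference sector, explicitly listing the angular endpoints and their membership in the partitions of $\Gamma_{n\pm 1}$, and treating separately the parity of $j$ in the inner case so that the two diagonal corner neighbors are correctly identified as genuinely distinct regions when $n\geq 2$. The final bound $NC(R_{n,j})\leq 9$ then follows immediately from the three cases.
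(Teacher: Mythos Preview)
The paper does not prove this lemma at all; it explicitly states that ``the proof of these facts will be left to the reader.'' Your direct geometric counting is exactly the natural argument and it is correct: your case analysis for $n\geq 2$, $n=1$, and $n=0$ yields $9$, $7$, and $3$ respectively, with the corner contacts and the mod-$2^n$ collapses handled correctly.
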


\begin{lemma}\label{areaRnj}
	Let $n$ be a non-negative integer. Then 
	$$m_{2}(R_{n,j})\asymp m_{2}(\tilde{{R}}_{n,j})\asymp 4^{-n}$$
	 for $j=0,1,\dots, 2^n-1$.
\end{lemma}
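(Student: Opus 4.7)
The plan is to prove both equivalences by elementary polar-coordinate computations, and then to deduce the second one from the first using Lemma~\ref{remarkNC}, which already tells us that $\tilde{R}_{n,j}$ is a union of at most nine regions $R_{m,k}$ with $|m-n|\le 1$.

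For the first equivalence $m_2(R_{n,j})\asymp 4^{-n}$, I would integrate the area element $r\,dr\,d\theta$ over the polar rectangle defining $R_{n,j}$. This yields
$$m_2(R_{n,j}) = \frac{2\pi}{2^n}\cdot\frac{1}{2}\left[(1-2^{-(n+1)})^2-(1-2^{-n})^2\right] = \frac{\pi}{2^n}\cdot 2^{-(n+1)}\left(2 - 3\cdot 2^{-(n+1)}\right).$$
The last factor lies in a fixed compact subinterval of $(0,2]$ for all $n\ge 0$, so $m_2(R_{n,j})\asymp 2^{-n}\cdot 2^{-n}=4^{-n}$. Equivalently, one may note that on $R_{n,j}$ the factor $r$ is comparable to $1$ when $n\ge 1$, while for $n=0$ the set $R_{0,0}$ is a disc of radius $1/2$ whose area $\pi/4$ is comparable to $1=4^{0}$.

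For the second equivalence, the lower bound is immediate from $\tilde{R}_{n,j}\supset R_{n,j}$. For the upper bound, each $R_{m,k}$ whose boundary meets $\partial R_{n,j}$ must lie in $\Gamma_{n-1}\cup\Gamma_{n}\cup\Gamma_{n+1}$, since radial contiguity between dyadic bands forces $|m-n|\le 1$. Applying the first equivalence to each such piece gives $m_2(R_{m,k})\asymp 4^{-m}\le 4\cdot 4^{-n}$, and since Lemma~\ref{remarkNC} guarantees at most nine such pieces (counting $R_{n,j}$ itself), subadditivity of measure yields
$$m_2(\tilde{R}_{n,j}) \le \sum_{R_{m,k}\subset \tilde{R}_{n,j}} m_2(R_{m,k}) \lesssim 4^{-n}.$$

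I do not anticipate any real obstacle; the whole lemma is a short bookkeeping exercise. The only minor pitfall is uniformly controlling the factor $(2-3\cdot 2^{-(n+1)})$ for all $n\ge 0$, and this is handled at once by the explicit algebraic identity above. The use of Lemma~\ref{remarkNC} neatly absorbs the special boundary cases $n=0$ and $n=1$, where $\Gamma_{n-1}$ is either empty or consists of a single region.
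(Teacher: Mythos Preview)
Your argument is correct. The paper leaves the proof of this lemma to the reader, and your polar-coordinate computation for $m_2(R_{n,j})$ together with the use of Lemma~\ref{remarkNC} and the observation $|m-n|\le 1$ for contiguous regions is exactly the routine verification the authors have in mind.
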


From now on, given a measurable set $A\subset \D$ of positive measure we will denote the mean $\frac{1}{m_2(A)}\int_{A} f\ dm_2$ by $\fint_{A} f\ dm_2$  for $f\in L^1(A)$.

\begin{definition}
	%We define the following operators $M_{R}$, $M_{\tilde{{R}}}$, $M_D$ as follows. 
	For every locally integrable  function $f$ on $\D$ we set:
	\begin{enumerate}
		\item $M_{R}f:=\sum_{n,j}\left(\fint_{R_{n,j}} |f|\ dm_{2}\right)\chi_{R_{n,j}}.$
		\item $M_{\tilde{{R}}}f:=\sum_{n,j} \left(\fint_{\tilde{R}_{n,j}} |f|\ dm_{2}\right)\chi_{R_{n,j}}.$
		\item $M_{D}f(z):=\fint_{D\left(z,\frac{1-|z|}{2}\right)} |f|\ dm_2, $ for all $z\in \D$.
	\end{enumerate} 
\end{definition}
Now, we estimate these sublinear operators in the following proposition.

\begin{proposition}\label{boundmaximaloperators} Let $f$ be a locally integrable function on $\D$.
\begin{enumerate}
	\item If $1\leq p\leq q<+\infty$, there is a constant $C(p,q)>0$ such that $\rho_{p,q}(M_{R}f)\ \leq C(p,q) \rho_{p,q}(f)$.\newline
	\item If $1\leq p\leq q<+\infty$ or $1< q\leq p\leq+\infty$, there is a constant $C(p,q)>0$ such that 
	$\rho_{p,q}(M_{{\tilde{R}}}f)\leq C(p,q)\rho_{p,q}(f)$.\newline
	\item If $1\leq p\leq q<+\infty$ or $1<q\leq p	\leq  +\infty$, there is a constant $C(p,q)>0$ such that
	$\rho_{p,q}(M_{D}f)\leq C(p,q)\rho_{p,q}(f)$.
\end{enumerate}
\end{proposition}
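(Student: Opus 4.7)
The plan is to treat the three statements with a common scheme: write $\int_0^1 |Mf(re^{i\theta})|^p\,dr$ as a sum, over dyadic scales $n$, of averages of $|f|^p$ on arcs of length $2\pi/2^n$ in $\phi$, and then estimate the outer $L^{q/p}(d\theta)$ norm. Part (3) will follow once part (2) is known, via the pointwise comparison $M_Df(z)\le C\,M_{\tilde R}f(z)$: Lemma~\ref{lemmaNC} gives $D(z,(1-|z|)/2)\subset \tilde R_{n,j}$ for $z\in R_{n,j}$, and Lemma~\ref{areaRnj} makes the relevant normalizing constants comparable, so that dividing by $m_2(D(z,(1-|z|)/2))$ instead of $m_2(\tilde R_{n,j})$ costs only a universal factor.

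\textbf{Parts (1) and (2) in the range $1\le p\le q<\infty$.} Since $M_Rf$ is constant on each $R_{n,j}$, one has
\[\int_0^1 |M_Rf(re^{i\theta})|^p\,dr=\sum_{n\ge 0} a_{n,j(n,\theta)}^p\cdot 2^{-n-1},\]
where $a_{n,j}=\fint_{R_{n,j}}|f|\,dm_2$ and $j(n,\theta)$ is the angular dyadic index at scale $n$ containing $\theta$. Applying Jensen together with $m_2(R_{n,j})\asymp 4^{-n}$ (Lemma~\ref{areaRnj}), each summand is bounded by $\fint_{I_n(\theta)}F_n(\phi)\,d\phi$, where $F_n(\phi)=\int_{1-2^{-n}}^{1-2^{-n-1}}|f(\rho e^{i\phi})|^p\,d\rho$ and $I_n(\theta)$ is the dyadic arc of length $2\pi/2^n$ containing $\theta$. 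Summing in $n$ and calling the result $P(\theta)$, I would dualize against nonnegative $g\in L^{(q/p)'}(\T)$ with $\|g\|_{(q/p)'}\le 1$: Fubini converts $\int Pg\,d\theta$ to $\sum_n\int F_n(\phi)A_ng(\phi)\,d\phi$, where $A_ng$ is the $n$-scale dyadic average of $g$ in $\theta$. Bounding $A_ng\le Mg$ uniformly in $n$ (with $M$ the circular Hardy-Littlewood maximal operator) and using $\sum_n F_n(\phi)=\int_0^1|f(\rho e^{i\phi})|^p\,d\rho=:G(\phi)$ gives
\[\int Pg\,d\theta\le \int (Mg)G\,d\phi\lesssim \|g\|_{(q/p)'}\,\|G\|_{q/p},\]
by the $L^{(q/p)'}$-boundedness of $M$ (the edge case $q=p$ is settled by $Mg\le\|g\|_\infty$). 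Taking $p$-th roots yields $\rho_{p,q}(M_Rf)\lesssim \rho_{p,q}(f)$, and the very same computation with $\tilde R_{n,j}$ in place of $R_{n,j}$, relying on the area comparison of Lemma~\ref{areaRnj} and on the fact that each $\rho\in[0,1)$ belongs to only boundedly many of the enlarged radial rings associated with the $\tilde R_{n,\cdot}$, proves (2) in this range.

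\textbf{Part (2) in the range $1<q\le p\le\infty$, and the main obstacle.} When $q/p\le 1$ the duality above no longer furnishes a scalar maximal bound with the right exponent, so the plan is instead to factor $M_{\tilde R}$ through two one-dimensional Hardy-Littlewood operators. Concretely, I would first establish the pointwise inequality
\[M_{\tilde R}f(re^{i\theta})\lesssim M^r(M^\theta f)(re^{i\theta}),\]
where $M^\theta$ applies the circular HL operator in $\phi$ at each fixed radius and $M^r$ applies the HL operator in $\rho$ at fixed $\theta$; the point is that $\tilde R_{n,j(n,\theta)}$ sits inside a Carleson-type rectangle $[\rho_-,\rho_+]\times I^*_n(\theta)$ of area comparable to $4^{-n}$ (using Lemmas~\ref{remarkNC} and~\ref{areaRnj}), and a direct factorization of the double average over this product region produces the desired bound. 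Once this is in hand, $\|M^rh\|_{L^p(dr)}\lesssim \|h\|_{L^p(dr)}$ (valid for $1<p\le\infty$) applied slice-by-slice in $\theta$ reduces matters to $\rho_{p,q}(M^\theta f)\lesssim \rho_{p,q}(f)$, which is precisely the Fefferman-Stein vector-valued maximal inequality for the $L^p(dr)$-valued function $\theta\mapsto f(\cdot\, e^{i\theta})$, and is known to hold for $1<q<\infty$ and $1<p\le\infty$. The main obstacle is executing the pointwise factorization $M_{\tilde R}f\lesssim M^rM^\theta f$ cleanly, because $\tilde R_{n,j}$ is not itself a product region and its precise shape must be absorbed into absolute constants via an enlargement to a product rectangle; once that step is carried out, the remainder is a standard appeal to classical and vector-valued maximal-function estimates.
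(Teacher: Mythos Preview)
Your treatment of parts (1), (3), and of part (2) in the range $1\le p\le q<\infty$ matches the paper's argument essentially step for step: dualize the outer $L^{q/p}(\T)$-norm, apply Jensen on each Luecking box, and reduce to the scalar Hardy--Littlewood bound on $\T$ (with the $q=p$ endpoint handled by $\|\mathcal{M}g\|_\infty\le\|g\|_\infty$). The reduction of (3) to (2) via $M_Df\lesssim M_{\tilde R}f$ is also exactly what the paper does.

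Where you diverge is part (2) in the range $1<q\le p\le\infty$. The paper does \emph{not} factor through one-dimensional maximal operators. Instead it exploits a self-duality trick: it views $M_{\tilde R}$ as the restriction to positive functions of the \emph{linear} operator $\tilde M f=\sum_{n,j}(\fint_{\tilde R_{n,j}}f)\chi_{R_{n,j}}$, computes its adjoint explicitly as $\tilde M^{*}f=\sum_{n,j}\beta_n(\fint_{R_{n,j}}f)\chi_{\tilde R_{n,j}}$, and then shows by the combinatorics of Lemmas~\ref{remarkNC}--\ref{areaRnj} that $\tilde M f\asymp\tilde M^{*}f$ for $f\ge 0$. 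Since $1<q\le p$ forces $1\le p'\le q'<\infty$, the already-proved case gives $\rho_{p',q'}(\tilde M)\lesssim 1$, hence $\rho_{p,q}(\tilde M^{*})\lesssim 1$, and the pointwise equivalence transfers this to $M_{\tilde R}$.

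Your route---the pointwise bound $M_{\tilde R}f\lesssim M^{r}M^{\theta}f$ followed by the scalar bound for $M^{r}$ on $L^{p}(dr)$ and the Fefferman--Stein vector-valued inequality for $M^{\theta}$ acting on $L^{p}(dr)$-valued functions---is correct and the ranges match (you need $p>1$ for $M^{r}$ and $1<q<\infty$, $1<p\le\infty$ for Fefferman--Stein, all of which hold). The paper's approach is lighter in that it uses only the scalar Hardy--Littlewood inequality and the already-established case, whereas yours imports the full vector-valued maximal theorem; on the other hand yours avoids computing the adjoint and verifying the $\tilde M\asymp\tilde M^{*}$ equivalence, which is the one place in the paper's proof where the precise combinatorics of the expanded Luecking regions is really used.
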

\begin{proof}
(1) Since $1\leq p\leq q<\infty$ we have that  $r=\frac{q}{p}\geq 1$. So
\begin{align*}
\rho_{p,q}^{p}(M_{R}f)&=\left(\int_{0}^{2\pi} \left(\int_{0}^{1} |M_{R}f(re^{i\theta})|^{p}\ dr\right)^{r}\ \frac{d\theta}{2\pi}\right)^{1/r}\\
&=\sup_{\substack{\xi\in B_{L^{r'}(\mathbb{T})} \\ \xi\geq 0}} \int_{0}^{2\pi} \left(\int_{0}^{1} |M_{R}f(re^{i\theta})|^p\ dr\right)\xi(e^{i\theta})\frac{d\theta}{2\pi}.
\end{align*}

Fix $\xi\in B_{L^{r'}(\mathbb{T})}$ with $\xi\geq 0$. Bearing in mind that the regions $R_{n,j}$ are pairwise disjoint we have that
\begin{align*}
\int_{0}^{2\pi} \left(\int_{0}^{1} |M_{R}f(re^{i\theta})|^p\ dr\right)\xi(e^{i\theta})\frac{d\theta}{2\pi}\asymp\sum_{n,j} \int_{R_{n,j}} \left(M_{R}f(z)\right)^p \xi\left(\frac{z}{|z|}\right)\ dm_2(z).
\end{align*}
Now, using Jensen's inequality and the fact that $M_{R}f$ is constant in each region $R_{n,j}$, we obtain that 
\begin{align*}
&\int_{0}^{2\pi} \left(\int_{0}^{1} |M_{R}f(re^{i\theta})|^p\ dr\right)\xi(e^{i\theta})\frac{d\theta}{2\pi}\\
&\lesssim \sum_{n,j}\left(\fint_{R_{n,j}} |f(z)|^p\ dm_{2}(z)\right)\int_{R_{n,j}}\xi\left(\frac{z}{|z|}\right)\ dm_{2}(z)\\
&\leq \frac{1}{4\pi}\sum_{n,j}\left(\fint_{R_{n,j}} |f(z)|^p\ dm_{2}(z)\right) m_{1}(I_{n,j})\int_{I_{n,j}}\xi\left(e^{i\theta}\right)\ dm_{1}(\theta)
\end{align*}
where $I_{n,j}=\left\{e^{i\theta}:\ \theta\in \left[\frac{2\pi j}{2^{n}},\frac{2\pi (j+1)}{2^{n}}\right)\right\}$.  Taking into account Lemma~\ref{areaRnj}, observe that $m_1(I_{n,j})\asymp 2^n$ and then $m_2(R_{n,j})\asymp (m_1(I_{n,j}))^2$. Using the Hardy-Littlewood maximal operator $\mathcal{M}$ we have
\begin{align*}
&\int_{0}^{2\pi} \left(\int_{0}^{1} |M_{R}f(re^{i\theta})|^p\ dr\right)\xi(e^{i\theta})\frac{d\theta}{2\pi}\lesssim\sum_{n,j} \int_{R_{n,j}} |f(z)|^{p} \inf_{e^{\theta i}\in I_{n,j}} \mathcal{M} \xi(e^{i\theta})\ dm_{2}(z)\\
&\leq \sum_{n,j} \int_{R_{n,j}} |f(z)|^{p}  \mathcal{M}\xi\left(\frac{z}{|z|}\right)\ dm_{2}(z)=\int_{\D} |f(z)|^{p}  \mathcal{M}\xi\left(\frac{z}{|z|}\right)\ dm_{2}(z)\\
&\lesssim \int_{0}^{2\pi} \mathcal{M}\xi(e^{i\theta}) \left(\int_{0}^{1} |f(re^{i\theta})|^{p}\ dr\right)\ \frac{d\theta}{2 \pi}\leq \|\mathcal{M}(\xi)\|_{L^{r'}(\T)}\ \rho_{p,q}^{q}(f).
\end{align*}

Finally, since $\| \mathcal{M} \xi\|_{L^{r'}(\mathbb{T})} \leq C_{r'} \|\xi\|_{L^{r'}(\mathbb{T})}$ we conclude the proof of statement (1). 

(2) The proof of the case $1\leq p\leq q<+\infty$ follows the same argument we have used in statement (1), but using Lemma~\ref{remarkNC}, Lemma~\ref{areaRnj} and the projection of the regions $\tilde{R}_{n,j}$, $n\in\N$, $j\in\{0,1,\dots,2^n-1\}$ over $\mathbb{T}$ instead of arcs $I_{n,j}$. In fact, it can be shown that the linear operator 
\begin{align*}
\tilde{M}f=\sum_{n,j}\left(\fint_{\tilde{R}_{n,j}}f(z)\ dm_{2}(z)\right)\chi_{R_{n,j}}
\end{align*}
is bounded on the space of measurable functions on $\D$ where the $\rho_{p,q}$-norm is finite. 

Now, we assume that $1<q\leq p\leq+\infty$. Let us describe the adjoint of the operator $\tilde{M}$. Given $g$, we have
\begin{align*}
\int_{\D} \left(\tilde{M}f(z)\right)g(z)\ dm_{2}(z)&=\sum_{n,j}\left(\fint_{\tilde{R}_{n,j}} f(w)\ dm_{2}(w)\right) \left(\int_{{R}_{n,j}} g(z)\ dm_{2}(z)\right)\\
&=\sum_{n,j} \left(\int_{\tilde{R}_{n,j}} f(w)\ dm_{2}(w)\right) \frac{m_{2}(R_{n,j})}{m_{2}(\tilde{R}_{n,j})} \left(\fint_{{R}_{n,j}} g(z)\ dm_{2}(z)\right)\\
&=\int_{\D} f(w)\left(\sum_{n,j} \beta_{n} \left(\fint_{{R}_{n,j}} g(z)\ dm_{2}(z)\right)\chi_{\tilde{R}_{n,j}}(w) \right)\ dm_{2}(w)
\end{align*} 
where $\beta_{n}=\frac{m_{2}(R_{n,j})}{m_{2}(\tilde{R}_{n,j})}$ (notice that this quotient does not depend on $j$). Hence, the adjoint operator is 
\begin{align*}
\tilde{M}^{\ast}f=\sum_{n,j} \beta_{n} \left(\fint_{{R}_{n,j}} f(z)\ dm_{2}(z)\right)\chi_{\tilde{R}_{n,j}}.
\end{align*}
We know that there is a constant $C(p,q)>0$ such that $\rho_{p',q'}(\tilde{M}f)\leq C(p,q)\rho_{p',q'}(f)$. Thus, $\rho_{p,q}(\tilde{M}^{\ast}f)\leq C(p,q)\rho_{p,q}(f)$. Moreover, we claim that $\tilde{M}f\asymp \tilde{M}^{\ast}f$ for positive functions. Therefore, it follows 
$$
\rho_{p,q}(M_{\tilde{{R}}}f)=\rho_{p,q}(\tilde{M}|f|)\asymp \rho_{p,q}(\tilde{M}^{\ast}f)\leq C(p,q)\rho_{p,q}(f).
$$

To proof the claim take a positive function $f$. Bearing in mind Lemma~\ref{areaRnj}, we have 
\begin{align*}
\tilde{M}^{\ast}f&=\sum_{n,j} \beta_{n} \left(\fint_{{R}_{n,j}} f(z)\ dm_{2}(z)\right)\chi_{\tilde{R}_{n,j}}\\
&=\sum_{n,j} \beta_{n} \sum_{\partial R_{n,j}\cap\partial R_{m,k}\neq \emptyset} \left(\fint_{R_{n,j}} f(z)\ dm_{2}(z)\right)\chi_{R_{m,k}}\\
&=\sum_{m,k}\sum_{\partial R_{n,j}\cap\partial R_{m,k}\neq \emptyset}\frac{1}{m_{2}(\tilde{R}_{n,j})}\left(\int_{R_{n,j}} f(z)\ dm_{2}(z)\right)\chi_{R_{m,k}}\\
&\asymp \sum_{m,k} \left(\fint_{\tilde{R}_{m,k}}f(z)\ dm_{2}(z)\right)\chi_{R_{m,k}}=\tilde{M}f.
\end{align*}

(3)
Let $z\in \D$  and take $R_{n,j}$ such that $z\in R_{n,j}$. Hence, using Lemma~\ref{lemmaNC}, we have $D\left(z,\frac{1-|z|}{2}\right)\subset \tilde{R}_{n,j}$. Also, it can be proved that $ {m_{2}(\tilde{R}_{n,j})}\asymp m_{2}\left(D\left(z,\frac{1-|z|}{2}\right)\right)$.
Therefore, the result follows because  $M_{D}f(z)\lesssim M_{\tilde{R}}f(z)$ for every $z\in \D$. 
\end{proof}

Using this result we obtain the converse Littlewood-Paley inequality for certain cases. These inequalities will be important in the subsequent study of the weak compactness of the operator $T_g: RM(1,q)\rightarrow RM(1,q)$.

	\begin{proposition}\label{converse-little-paley}
	Assume $(p,q)$ are in one of the following three cases: $1< p, q<+\infty$, $(1,q)$ with $1\leq q<+\infty$, or $(\infty,q)$ with $1\leq q\leq+\infty$.
Then, there is a constant $C=C(p,q)>0$ such that
	\begin{align*}
	\rho_{p,q}(f'(z)(1-|z|))\leq C \rho_{p,q}(f), \qquad f\in RM(p,q).
	\end{align*}
\end{proposition}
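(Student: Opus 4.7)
The plan is to reduce the converse inequality to the maximal-operator estimate $\rho_{p,q}(M_D f)\lesssim \rho_{p,q}(f)$ of Proposition~\ref{boundmaximaloperators}(3), via a pointwise bound derived from Cauchy's integral formula.

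For the pointwise step, fix $z \in \D$ and set $r_0 := (1-|z|)/2$, so that $\overline{D(z,r_0)} \subset \D$. Cauchy's formula on the circle $|\zeta - z| = \rho$ gives, for every $\rho \in (0, r_0]$, the inequality $\rho\,|f'(z)| \leq (2\pi)^{-1}\int_0^{2\pi} |f(z+\rho e^{it})|\, dt$. Multiplying by $\rho$ and integrating in $d\rho$ on $[0, r_0]$, the right-hand side becomes a polar integral on $D(z,r_0)$, yielding
$$\frac{r_0^3}{3}\,|f'(z)| \leq \frac{1}{2\pi}\int_{D(z,r_0)} |f(w)|\, dm_2(w) = \frac{r_0^2}{2}\, M_D f(z),$$
that is, the key pointwise estimate $(1-|z|)\,|f'(z)| \leq 3\, M_D f(z)$ for every $z \in \D$.

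For every pair $(p,q)$ of the statement except $(\infty, 1)$, Proposition~\ref{boundmaximaloperators}(3) applies directly: the cases $1<p, q<+\infty$ and $(1, q)$ with $1\leq q<+\infty$ fit the branch $1 \leq p \leq q < +\infty$, and the case $(\infty, q)$ with $1 < q \leq +\infty$ fits the branch $1 < q \leq p \leq +\infty$. Combining with the pointwise estimate yields $\rho_{p,q}\bigl((1-|z|)|f'(z)|\bigr) \lesssim \rho_{p,q}(M_D f) \lesssim \rho_{p,q}(f)$.

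The main obstacle is the remaining case $(\infty, 1)$, i.e.\ $RM(\infty,1) = H^1$, which falls outside the scope of Proposition~\ref{boundmaximaloperators}(3). I would handle it through the nontangential maximal function: a direct geometric check shows that, for every $r\in[0,1)$ and $\theta$, the disk $D(re^{i\theta}, (1-r)/2)$ is contained in a nontangential cone $\Gamma_\alpha(e^{i\theta})$ of fixed aperture $\alpha$ (independent of $r$ and $\theta$), so trivially $M_D f(re^{i\theta}) \leq \sup_{w \in D(re^{i\theta},(1-r)/2)} |f(w)| \leq N_\alpha[f](e^{i\theta})$, where $N_\alpha[f]$ denotes the associated nontangential maximal function of $f$. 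Combined with the pointwise bound, this gives $\sup_{r \in [0,1)}(1-r)|f'(re^{i\theta})| \leq 3\, N_\alpha[f](e^{i\theta})$, and integrating over $\theta$ together with the classical $H^1$ maximal characterization $\|N_\alpha[f]\|_{L^1(\T)} \asymp \|f\|_{H^1} = \rho_{\infty,1}(f)$ closes the argument.
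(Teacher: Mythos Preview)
Your proof is correct and follows essentially the same route as the paper: derive the pointwise bound $(1-|z|)|f'(z)|\le 3\,M_Df(z)$ from Cauchy's formula, invoke Proposition~\ref{boundmaximaloperators}(3) wherever it applies, and cover the leftover Hardy case via the nontangential maximal function (the paper cites \cite[Theorem~3.1, p.~55]{garnett_bounded_2007} for the same inequality, and happens to run the Stolz-region argument for \emph{all} $p=\infty$ rather than just $q=1$, which is merely a presentational choice).

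One small slip to fix: you write that the case $1<p,q<+\infty$ ``fits the branch $1\le p\le q<+\infty$'' of Proposition~\ref{boundmaximaloperators}(3). That is only true when $p\le q$; when $q<p$ you must use the other branch $1<q\le p\le +\infty$. Either way the proposition applies, so the conclusion stands, but the sentence as written is inaccurate.
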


\begin{proof}
By means of Cauchy's integral formula over $\partial D(z,r)$ for $0\leq r\leq \frac{1-|z|}{2}$ we have that
\begin{align*}
2\pi r^{2} |f'(z)|\leq r \int_{0}^{2\pi} |f(z+re^{i\theta})|\ d\theta.
\end{align*}
Integrating with respect to $r$ over $0\leq r\leq \frac{1-|z|}{2}$ 
\begin{align*}
\frac{1}{3}(1-|z|)|f'(z)|\leq \fint_{D\left(z,\frac{1-|z|}{2}\right)} |f(\xi)|\ dm_{2}(\xi)=M_{D}f(z).
\end{align*}
Assume that $1\leq p\leq q<+\infty$ or $1<q\leq p	\leq  +\infty$.  Taking $RM(p,q)$-norm and using the statement (3) of Proposition~\ref{boundmaximaloperators} we conclude
$$\rho_{p,q}(f'(z)(1-|z|)\leq 3 \rho_{p,q}(M_{D}f)\leq C_{p,q}\rho_{p,q}(f).$$\newline

Now, we continue with the remaining cases, that is, $p=+\infty$, $1\leq q\leq +\infty$. Fix $e^{i\theta}\in \partial\D$ and $r\in (0,1)$. 
From $(1-r)|f'(re^{i\theta})|\lesssim M_{D}f(re^{i\theta})$ and taking a certain Stolz region $\Gamma\left(e^{i\theta}\right)$ such that $D\left(re^{i\theta},\frac{1-r}{2}\right)\subset \Gamma\left(e^{i\theta}\right)$, it follows that
$$(1-r)|f'(re^{i\theta})|\leq 3\sup_{w\in\Gamma\left(e^{i\theta}\right)} |f(w)|=:3H_{f}(e^{i\theta}).$$
 Moreover, if we take supremum with respect to $r$, it follows that $\sup_{r\in[0,1)} (1-r)|f'(re^{i\theta})|\leq 3H_{f}(e^{i\theta})$.
	
Therefore, taking $L^{q}(\mathbb{T})$-norm  for $q\geq1$ and using \cite[Theorem 3.1, p. 55]{garnett_bounded_2007}, we obtain
\begin{align}\label{LPinfq}
\rho_{\infty,q}((1-|z|)f'(z))\leq \|H_f\|_{L^{q}(\mathbb{T})}\leq C_{q} \|f\|_{H^{q}} \leq C_{q}\rho_{\infty,q}(f).
\end{align}
	%To conclude the proof, we only need to show the inequality for $q=1$. Given $F\in RM(\infty,1)=H^1$, we have that there exists $f,g\in RM(\infty,2)=H^2$ such that $F=fg$ and $\|F\|_{H^1}=\|f\|_{H^2}\|g\|_{H^2}$. Using this decomposition of $F$ and Hölder's inequality, it follows that 
%\begin{align*}
%\rho_{\infty,1}(F'(z)(1-|z|))&\leq \rho_{\infty,1}(g(z)f'(z)(1-|z|))+\rho_{\infty,1}(f(z)g'(z)(1-|z|))\\
%&\leq \rho_{\infty,2}(f'(z)(1-|z|))\rho_{\infty,2}(g)+ \rho_{\infty,2}(g'(z)(1-|z|))\rho_{\infty,2}(f).
%\end{align*}
%Using the inequality \eqref{LPinfq} and the fact that $\|F\|_{H^1}=\|f\|_{H^2}\|g\|_{H^2}$, we conclude the result.
%\begin{align*}
%\rho_{\infty,1}(F'(z)(1-|z|))&\leq 2 C \rho_{\infty,2}(f)\rho_{\infty,2}(g) \leq  2C C' \|f\|_{H^2}\|g\|_{H^2}=\|F\|_{H^1}\leq \rho_{\infty,1}(F).
%\end{align*}
\end{proof}

To finish this section, we point out that we do not know if the converse Littlewood-Paley inequality holds in the cases $(p,1)$ with $1<p<+\infty$ and $(p,\infty)$ with $1\leq p<+\infty$.

\section{On the bidual of $RM(p,0)$}

In this section, we identify in a natural manner the bidual of $RM(p,0)$ with $RM(p,\infty)$.
It is clear that $RM(p,\infty)$ is a subspace of the Bergman space $A^{p}=RM(p,p)$. Throughout this section, we denote by $I$ the inclusion map from $RM(p,0)$ into $A^p$.
We follow the scheme of the proof of K.-M. Perfekt in \cite{perfekt_2013}, but it is worth mentioning we can not use his results directly.

\begin{lemma}\label{densityBergRMP0} Let $1<p<+\infty$. Then $I^*((A^{p})^{\ast})$ is dense (in the norm topology) in  $(RM(p,0))^{\ast}$.
\end{lemma}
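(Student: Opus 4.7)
My approach, following Perfekt's scheme from~\cite{perfekt_2013}, is to reduce the density to an injectivity statement via a Hahn-Banach argument and then to produce explicit approximants using dilations.

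First, by a standard Hahn-Banach argument, density of $I^*((A^p)^*)$ in $(RM(p,0))^*$ is equivalent to showing that the annihilator of $I^*((A^p)^*)$ in $(RM(p,0))^{**}$ is trivial. A direct computation identifies this annihilator with $\ker I^{**}$: indeed $x^{**}(I^*(\psi))=I^{**}(x^{**})(\psi)$ for every $\psi\in(A^p)^*$, so annihilation of the whole range of $I^*$ is the same as $I^{**}(x^{**})=0$. The lemma is thus equivalent to proving that $I^{**}\colon(RM(p,0))^{**}\to(A^p)^{**}$ is injective. Using reflexivity of $A^p$ for $1<p<+\infty$, we may view $I^{**}$ as a map into $A^p$.

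Next, for any $\phi\in(RM(p,0))^*$ and $r\in(0,1)$, I introduce the dilated functional $\phi_r\in(RM(p,0))^*$ defined by $\phi_r(f):=\phi(f_r)$, where $f_r(z)=f(rz)$. I claim that $\phi_r$ already belongs to $I^*((A^p)^*)$. To see this, note that for any $g\in A^p$, the dilate $g_r$ lies in $H^\infty\subset RM(p,0)$, with the quantitative bound $\|g_r\|_{H^\infty}\leq C(r)\|g\|_{A^p}$ (from the standard point-evaluation estimate in $A^p$). Hence $\tilde\phi_r(g):=\phi(g_r)$ defines a bounded linear functional on $A^p$, and $I^*(\tilde\phi_r)=\phi_r$. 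Thus producing a dense family of elements of $I^*((A^p)^*)$ reduces to controlling how well $\phi_r$ approximates $\phi$.

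The heart of the proof is then to show that $\phi_r\to\phi$ in the norm of $(RM(p,0))^*$ as $r\to 1^-$. For each fixed $f\in RM(p,0)$, Proposition~\ref{main-properties}(5) gives $\|f-f_r\|_{RM(p,0)}\to 0$ and hence the pointwise (weak-$*$) convergence $\phi_r(f)\to\phi(f)$. The na\"ive bound $|\phi(f-f_r)|\leq\|\phi\|\,\|f-f_r\|_{RM(p,0)}$ is however \emph{insufficient} for norm convergence, since $\sup_{\|f\|_{RM(p,0)}\leq 1}\|f-f_r\|_{RM(p,0)}$ does not tend to zero (otherwise the identity on $RM(p,0)$ would be compact, which is false).

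\textbf{Main obstacle:} overcoming this non-uniformity is the crux. I would address it by exploiting that $RM(p,0)$ embeds naturally and continuously into $C(\mathbb{T},L^p[0,1])$, so that by Hahn-Banach and Riesz representation any $\phi\in(RM(p,0))^*$ admits an extension given by a vector-valued measure $\mu\in M(\mathbb{T},L^{p'}[0,1])$ of comparable norm; applying the Lebesgue dominated convergence theorem in $d|\mu|$ (using $\|f(\cdot e^{i\theta})-f(r\cdot e^{i\theta})\|_{L^p[0,1]}$ as the integrand, which tends to $0$ uniformly in $\theta$ for each fixed $f$ and is uniformly bounded over the unit ball) should yield the required uniform estimate $\sup_{\|f\|\leq 1}|\phi(f-f_r)|\to 0$. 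Once this is established, $\phi$ lies in the norm closure of $I^*((A^p)^*)$, finishing the proof.
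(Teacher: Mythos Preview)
Your reduction and the construction of the functionals $\phi_r$ are fine and match the paper's setup exactly. The gap is in the last step, where you claim that the vector-measure representation together with dominated convergence gives the \emph{uniform} estimate $\sup_{\|f\|\leq 1}|\phi(f-f_r)|\to 0$. It does not. The two properties you list --- that the integrand $\theta\mapsto\|f(\cdot\,e^{i\theta})-f_r(\cdot\,e^{i\theta})\|_{L^p}$ tends to $0$ (uniformly in $\theta$) for each fixed $f$, and that it is bounded by $2$ for all $f$ in the unit ball --- are precisely the hypotheses that give, via DCT, the pointwise conclusion $\phi_r(f)\to\phi(f)$ for each $f$. They do not allow you to interchange $\sup_f$ with $\lim_r$. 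Concretely, after pushing $\sup_f$ inside the integral you are left with $\int_{\mathbb T}\bigl(\sup_{\|f\|\leq 1}\|f(\cdot\,e^{i\theta})-f_r(\cdot\,e^{i\theta})\|_{L^p}\bigr)\,d|\mu|(\theta)$; by rotation invariance the integrand is independent of $\theta$ and equals $\sup_{\|f\|\leq 1}\|f-f_r\|_{L^p[0,1]}$ along a single radius, which does not tend to $0$ (for the same reason you yourself identified: the identity on $RM(p,0)$ is not compact). So the obstacle you flagged is still there; the measure representation only repackages it.

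The paper overcomes this by a completely different mechanism. Arguing by contradiction, if $\|\lambda_r-\lambda\|\not\to 0$ one extracts a bounded sequence $g_n\in RM(p,0)$ converging to $0$ uniformly on compacta with $|\lambda(g_n)|>\varepsilon$. A gliding-hump construction (choosing radii $\rho_k\nearrow 1$ so that each $g_{n_k}$ is essentially supported in the annulus $\rho_k\le|z|<\rho_{k+1}$) shows that $(\alpha_k)\mapsto\sum_k\alpha_k g_{n_k}$ defines a bounded operator $T:\ell^p\to RM(p,0)$. Then $\lambda\circ T\in(\ell^p)^\ast=\ell^{p'}$, hence its coordinates $\lambda(g_{n_k})$ must tend to $0$, contradicting $|\lambda(g_{n_k})|>\varepsilon$. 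Note that this is exactly where the hypothesis $1<p<\infty$ enters: one needs $p'<\infty$ so that elements of $\ell^{p'}$ have vanishing coordinates. Your measure/DCT approach never uses $p>1$ in an essential way, which is another signal that it cannot succeed as stated.
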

\begin{proof}
	Given $\lambda\in (RM(p,0))^{\ast}$, consider the following family of bounded linear functionals $\lambda_{r}(f):=\lambda(f_r)$, $0<r<1$. We will prove that $\lambda_r\in (A^{p})^{\ast}$ and $\lim_{r\to 1}\Vert I^*(\lambda_r)-\lambda\Vert _{(RM(p,0))^{\ast}} =0$. It is clear that $ I^*(\lambda_r)$ acts as $\lambda _r$ over $RM(p,0)$. So that, as customary, with a slight abuse of notation, we will  write $\lambda _r$ instead of $I^*(\lambda_r)$.
	
	Given $f\in A^p$, we have 	
	\begin{align*}
	|\lambda_{r}(f)|=|\lambda(f_r)|\leq \|\lambda\| \, \rho_{p,\infty}(f_r)\leq \|\lambda\| \sup_{D(0,r)}|f(w)|\lesssim \frac{\|\lambda\|}{(1-r)^{2/p}} \rho_{p,p}(f).
	\end{align*}
	Therefore, $\lambda_{r}\in (A^p)^*$.
	
	Assume that $\lim_{r\to 1}\Vert I^*(\lambda_r)-\lambda\Vert _{(RM(p,0))^{\ast}} $ is not $0$. Then there exists $\epsilon>0$, a sequence $\{r_n\}$ in $(0,1)$, with $r_n\rightarrow 1$, and a sequence $\{h_n\}$ in the unit ball of $RM(p,0)$ such that 
	\begin{align*}
	\epsilon<|\lambda(h_n)-\lambda _{r_n} (h_{n})|=|\lambda(h_n-(h_{n})_{r_n})|\leq \|\lambda\| \, \rho_{p,\infty}(h_n-(h_{n})_{r_n})
	\end{align*}
	for all $n$. Writing  $g_n:=h_n-(h_{n})_{r_n}$, we have a bounded sequence $\{g_n\}$ in $RM(p,0)$ that goes to zero uniformly on compacta of $\D$ and such that $\frac{\varepsilon}{\|\lambda\|}<\rho_{p,\infty}(g_n)<3$ for all $n$.
	
Fix a sequence of positive numbers $\{\varepsilon_k\}\in \ell^{p'}$ and $\rho_1\in (0,1)$. There exists $n_1$  such that $\sup\{|g_{n_1}(z)|:\, |z|\leq \rho_1\}<\varepsilon_{1}$. Since $g_{n_1}\in RM(p,0)$, 
we can choose $\rho_2>\rho_1$ so that 
	\begin{align*}
	\sup_{\theta} \left(\int_{\rho_2}^{1} |g_{n_1}(re^{i\theta})|^{p}\ dr\right)^{1/p}<\varepsilon_{1}.
	\end{align*}
With the same argument, we obtain $n_2$ and $\rho_{3}>\rho_2$ such that $\sup\{|g_{n_2}(z)|:\, |z|\leq \rho_2\}<\varepsilon_{2}$ and 
	\begin{align*}
	\sup_{\theta}& \left(\int_{\rho_3}^{1} |g_{n_2}(re^{i\theta})|^{p}\ dr\right)^{1/p}<\varepsilon_{2}.
	\end{align*}
	By induction, we build a sequence $\{g_{n_k}\}$, an increasing sequence of numbers $\{\rho_k\}$ that converges to $1$, such that 
	$\sup\{|g_{n_k}(z)|:\, |z|\leq \rho_k\}<\varepsilon_{k}$ and 
	\begin{align*}
	\sup_{\theta}& \left(\int_{\rho_{k+1}}^{1} |g_{n_k}(re^{i\theta})|^{p}\ dr\right)^{1/p}<\varepsilon_{k}.
	\end{align*}

%	Now, we proceed to define the operator $T:\ell^p\rightarrow RM(p,0)$ as $T(e_k)=g_{{n_k}}$, $\forall k$. First of all, we need to see that this operator is well-defined, linear and continuous. \newline
	
	Given $\{\alpha_k\}\in\ell^p$, we will see that $\sum_{k=0}^{\infty}\alpha_{k} g_{n_k}\in RM(p,0)$. Since the sequence $\{g_{n_k}\}$ goes to zero uniformly on compacta faster than a sequence in $\ell ^{p'}$, we get that $\sum_{k=0}^{\infty}\alpha_{k} g_{n_k}\in \mathcal{H}(\D)$. Moreover
	\begin{align*}
	&\rho_{p,\infty}\left(\sum_{k=1}^{\infty}\alpha_{k}g_{n_k}\right)\\
	&\quad =\sup_{\theta} \left(\int_{0}^{1}\left|\sum_{k=1}^{\infty} \alpha_{k} g_{n_k}(re^{i\theta})\left(\chi_{[0,\rho_k)}(r)+\chi_{[\rho_k,\rho_{k+1})}(r)+\chi_{[\rho_{k+1},1)}(r)\right)\right|^{p}\ dr\right)^{1/p}\\
	&\quad\leq \sup_{\theta} \left(\int_{0}^{1} \left(\sum_{k=1}^{\infty}|\alpha_k|\varepsilon_{k}\right)^{p}\ dr\right)^{1/p}+\sup_{\theta} \left(\int_{0}^{1}\left(\sum_{k=1}^{\infty}|\alpha_k||g_{n_k}(re^{i\theta})|\chi_{[\rho_k,\rho_{k+1})}(r)\right)^{p}\ dr\right)^{1/p}\\
	&\qquad+\sup_{\theta} \left(\int_{0}^{1}\left(\sum_{k=1}^{\infty}|\alpha_k||g_{n_k}(re^{i\theta})|\chi_{[\rho_{k+1},1)}(r)\right)^{p}\ dr\right)^{1/p}\\
	&\quad \leq \|\{\alpha_{k}\}\|_{\ell^p}\|\{\varepsilon_{k}\}\|_{\ell^{p'}}+\sup_{\theta} \left(\sum_{k=1}^{\infty} \int_{\rho_k}^{\rho_{k+1}}|\alpha_{k}|^{p}|g_{n_k}(re^{i\theta})|^{p}\ dr\right)^{1/p}\\
	&\qquad+\sum_{k=1}^{\infty}|\alpha_k| \sup_{\theta}\left(\int_{\rho_{n+1}}^{1} |g_{n_k}(re^{i\theta})|^{p}\ dr\right)^{1/p}\\
	&\quad\leq \|\{\alpha_{k}\}\|_{\ell^p}\|\{\varepsilon_{k}\}\|_{\ell^{p'}}+3\|\{\alpha_{k}\}\|_{\ell^p}+\|\{\alpha_{k}\}\|_{\ell^p}\|\{\varepsilon_{k}\}\|_{\ell^{p'}}=\left(3+2\|\{\varepsilon_{k}\}\|_{\ell^{p'}}\right)\|\{\alpha_{k}\}\|_{\ell^p}.
	\end{align*}
Therefore, $\sum_{k=0}^{\infty}\alpha_{k} g_{n_k}\in RM(p,\infty)$ and moreover this series is convergent in $RM(p,\infty)$. Since $\sum_{k=0}^{N}\alpha_{k} g_{n_k}\in RM(p,0)$	for all $N$ and  $RM(p,0)$ is closed in $RM(p,\infty)$, we conclude that  $\sum_{k=0}^{\infty}\alpha_{k} g_{n_k}\in RM(p,0)$.
Therefore, there exists a bounded linear operator $T:\ell^p\rightarrow RM(p,0)$ such that  $T(e_k)=g_{{n_k}}$, for all $k$. 
	
	Now, if we consider the composition of the operators $\lambda$ and $T$ it follows that $\lambda \circ T: \ell^p\rightarrow \C$ is a bounded linear functional, that is, $\lambda \circ T \in (\ell^p)^{\ast}\cong \ell^{p'}$. So, it satisfies that $(\lambda \circ T)(e_{k})\rightarrow 0$, $k\rightarrow \infty$, but this is impossible because $|(\lambda \circ T)(e_k)|>\varepsilon$ for all $k\in \N$. Therefore, $I^{\ast}((A^p)^{\ast})$ is dense in $(RM(p,0))^{\ast}$.
\end{proof}

	\begin{lemma}\label{aprox-in-Bergman}
		For all $f\in RM(p,\infty)$ there exists a sequence $\{f_n\}\subset RM(p,0)$ such that $f_n\rightarrow f$ in $A^p$ and $\limsup_{n}\rho_{p,\infty}(f_n)\leq\rho_{p,\infty}(f)$.
	\end{lemma}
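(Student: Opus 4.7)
The natural candidate is the dilation sequence. Given $f\in RM(p,\infty)$ and any sequence $r_n\nearrow 1$, I set $f_n(z):=f_{r_n}(z)=f(r_nz)$ and plan to verify the three required properties: (a) $f_n\in RM(p,0)$; (b) $f_n\to f$ in $A^p$; (c) $\limsup_n\rho_{p,\infty}(f_n)\le \rho_{p,\infty}(f)$.

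First I would observe that $RM(p,\infty)\subset A^p=RM(p,p)$, since
\begin{align*}
\rho_{p,p}(f)^p=\frac{1}{2\pi}\int_0^{2\pi}\int_0^1|f(re^{i\theta})|^p\,dr\,d\theta\le \rho_{p,\infty}(f)^p.
\end{align*}
For (a), each $f_n$ is analytic on $D(0,1/r_n)$ and in particular bounded on $\overline{\D}$, so
\begin{align*}
\sup_\theta\left(\int_\rho^1|f_n(re^{i\theta})|^p\,dr\right)^{1/p}\le \|f_n\|_{H^\infty}(1-\rho)^{1/p}\xrightarrow[\rho\to 1^-]{} 0,
\end{align*}
showing $f_n\in RM(p,0)$. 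For (c), the change of variable $t=r_n s$ gives
\begin{align*}
\rho_{p,\infty}(f_n)^p=\sup_\theta\int_0^1|f(r_nse^{i\theta})|^p\,ds=\sup_\theta\frac{1}{r_n}\int_0^{r_n}|f(te^{i\theta})|^p\,dt\le \frac{\rho_{p,\infty}(f)^p}{r_n},
\end{align*}
so $\limsup_n\rho_{p,\infty}(f_n)\le \rho_{p,\infty}(f)$.

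For (b), I would use the density of polynomials in $A^p$ together with the uniform bound just obtained. For any polynomial $P$, $\|P_{r_n}-P\|_{A^p}\to 0$ by uniform convergence on $\overline{\D}$; for $f\in A^p$ the estimate
\begin{align*}
\|f_{r_n}-f\|_{A^p}\le \|f_{r_n}-P_{r_n}\|_{A^p}+\|P_{r_n}-P\|_{A^p}+\|P-f\|_{A^p},
\end{align*}
together with $\|f_{r_n}-P_{r_n}\|_{A^p}\le r_n^{-2/p}\|f-P\|_{A^p}$ (the standard $A^p$ dilation estimate), yields convergence. There is no real obstacle here; the only point worth care is the change of variable producing the sharp constant $r_n^{-1/p}$ in step (c), which is exactly what makes $\limsup$ (rather than a harmless multiplicative constant) possible.
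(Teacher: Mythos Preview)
Your proof is correct and follows essentially the same route as the paper's: the paper also uses the dilations $f_r$, verifies $f_r\in RM(p,0)$ via boundedness on $\overline{\D}$, and obtains the $\limsup$ bound by the same change of variable $t=rs$ yielding $\rho_{p,\infty}(f_r)^p\le r^{-1}\rho_{p,\infty}(f)^p$. The only cosmetic difference is step~(b): the paper simply invokes Proposition~\ref{main-properties}(4), which already gives $\rho_{p,p}(f-f_r)\to 0$ for any $f\in A^p$, whereas you reprove this via density of polynomials and a dilation estimate (note that with the paper's $\rho_{p,p}$-norm the correct constant is $r_n^{-1/p}$ rather than $r_n^{-2/p}$, but this is immaterial).
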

	\begin{proof}
		Fix $f\in RM(p,\infty)$ and for each $r\in (0,1)$ consider $f_r(z):=f(rz)$, $z\in \D$. We have 
		\begin{align*}
		\sup_{\theta} \left(\int_{\rho}^{1}|f_{r}(s e^{i\theta})|^{p}\ ds\right)^{1/p}=\sup_{\theta} \left(\int_{\rho}^{1}|f(rs e^{i\theta})|^{p}\ ds\right)^{1/p}\leq (1-\rho)^{1/p}\sup_{z\in D(0,r)} |f(z)| \rightarrow 0 
		\end{align*}
		when $\rho\rightarrow 1$.
		That is, the function $f_r$ belongs to $ RM(p,0)$ for all $r<1$.
		
Since $RM(p,\infty)\subset A^{p}$, by  Proposition \ref{main-properties}(4),  $\rho_{p,p}(f_r-f)\rightarrow 0$ when $r\rightarrow 1$. Moreover, since $\int_{0}^{1}|f_{r}(se^{i\theta})|^p\ ds<\frac{1}{r} \int_{0}^{1}|f(se^{i\theta})|^p\ ds$, for all $\theta\in[0,2\pi]$, we have that \linebreak[4] $\limsup_{r\rightarrow 1}\rho_{p,\infty}(f_r)\leq \rho_{p,\infty}(f)$.
	\end{proof}

\begin{theorem}\label{DualRMpinfty}Let $1<p<+\infty$ and the inclusion $I:RM(p,0)\rightarrow A^{p}$. Then $I^{\ast\ast}:(RM(p,0))^{\ast\ast}\rightarrow A^p$ is a continuous and injective inclusion. Moreover
	\begin{enumerate}
		\item[(1)] $I^{\ast\ast}((RM(p,0))^{\ast\ast})= RM(p,\infty),$
		\item[(2)] $I^{\ast\ast}: (RM(p,0))^{\ast\ast}\rightarrow RM(p,\infty)$ is an isometry.
	\end{enumerate} 
If $\{x_{n}\}$ is a bounded sequence in $(RM(p,0))^{\ast\ast}$ that converges to $0$ in the weak-$^{*}$ topology, then $\{I^{\ast\ast}(x_{n})\}$ converges to $0$ uniformly on compact subsets of the unit disc.
\end{theorem}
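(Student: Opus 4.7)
The plan is to reduce everything to the pointwise identification $(I^{**}x)(z)=x(\delta_z)$ and to the two approximation lemmas just proved. First, $I^{**}$ is automatically continuous; its injectivity follows from Lemma~\ref{densityBergRMP0}, since the kernel of $I^{**}$ is the annihilator in $(RM(p,0))^{**}$ of $I^{*}((A^p)^{*})$, which is trivial because $I^{*}((A^p)^{*})$ is norm-dense in $(RM(p,0))^{*}$. Using that $I$ is the inclusion and that $A^p$ is reflexive for $1<p<+\infty$, one has $I^{*}(\delta_z^{A^p})=\delta_z$, so
$$\bigl(I^{**}(x)\bigr)(z)=\langle I^{**}(x),\delta_z^{A^p}\rangle=\langle x,I^{*}(\delta_z^{A^p})\rangle=x(\delta_z),\qquad z\in\D,$$
and in particular $I^{**}(x)$ is holomorphic on $\D$.

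Next I aim at the upper bound $\rho_{p,\infty}(I^{**}x)\leq\|x\|$. For $h\in L^{p'}[0,1]$ and $\theta\in[0,2\pi)$ I introduce the functional
$$\Lambda_{h,\theta}(g):=\int_0^1 g(re^{i\theta})h(r)\,dr,\qquad g\in RM(p,0),$$
which lies in $(RM(p,0))^{*}$ with $\|\Lambda_{h,\theta}\|\leq\|h\|_{p'}$ by H\"older. The crucial step is the identity
$$x(\Lambda_{h,\theta})=\int_0^1 h(r)\,(I^{**}x)(re^{i\theta})\,dr\qquad\textrm{for }h\in C_c([0,1)),$$
which I would establish by approximating $\Lambda_{h,\theta}$ in the norm of $(RM(p,0))^{*}$ by Riemann sums of the form $\sum_k h(r_k)(r_{k+1}-r_k)\delta_{r_ke^{i\theta}}$; the uniform estimate on the unit ball of $RM(p,0)$ that closes this comes from Lipschitz control of $r\mapsto\delta_{re^{i\theta}}$ on compact subsets of $[0,1)$, which in turn follows from the pointwise bounds on $|g'|$ given in Proposition~\ref{main-properties}(2). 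Once the identity is in hand, writing $f:=I^{**}x$ and varying $h\in C_c([0,\rho))$ with $\|h\|_{p'}\leq 1$, duality yields $\|f(\cdot e^{i\theta})\|_{L^p[0,\rho]}\leq\|x\|$; letting $\rho\to 1$ and taking the supremum in $\theta$ gives $\rho_{p,\infty}(f)\leq\|x\|$, and in particular $I^{**}((RM(p,0))^{**})\subset RM(p,\infty)$.

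For the reverse containment and inequality, given $f\in RM(p,\infty)$ I invoke Lemma~\ref{aprox-in-Bergman} to produce $\{f_n\}\subset RM(p,0)$ with $f_n\to f$ in $A^p$ and $\limsup_n\rho_{p,\infty}(f_n)\leq\rho_{p,\infty}(f)$. The sequence $\{\widehat{f_n}\}$ is bounded in $(RM(p,0))^{**}$, so Banach--Alaoglu produces a subnet $\widehat{f_{n_\alpha}}\to x$ in the weak-$*$ topology, and weak-$*$ lower semicontinuity of the norm yields $\|x\|\leq\limsup_n\rho_{p,\infty}(f_n)\leq\rho_{p,\infty}(f)$. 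Since $I^{**}$ is weak-$*$-to-weak-$*$ continuous and $A^p$ is reflexive, the $A^p$-norm convergence $f_n\to f$ forces $I^{**}(x)=f$. Combined with the preceding paragraph this gives $\|x\|=\rho_{p,\infty}(f)$, proving simultaneously that $I^{**}$ maps onto $RM(p,\infty)$ and that it is an isometry.

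Finally, if $\{x_n\}$ is bounded in $(RM(p,0))^{**}$ and $x_n\to 0$ weak-$*$, the pointwise formula gives $(I^{**}x_n)(z)=x_n(\delta_z)\to 0$ for each $z\in\D$, while $|(I^{**}x_n)(z)|\leq\|x_n\|\,\|\delta_z\|\lesssim (\sup_n\|x_n\|)(1-|z|)^{-1/p}$ via Proposition~\ref{main-properties}(3) is uniformly bounded on compacta; Montel's theorem then turns this uniformly bounded pointwise-null sequence of holomorphic functions into one converging to $0$ uniformly on compact subsets of $\D$. I expect the main technical obstacle to be the Riemann-sum identification of $x(\Lambda_{h,\theta})$, as this is the only place where the action of a generic element of the bidual has to be transferred to $I^{**}(x)$ through an explicit limiting procedure, and it requires a norm (not merely pointwise) approximation of $\Lambda_{h,\theta}$ by finitely supported functionals.
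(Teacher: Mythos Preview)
Your proof is correct. The main divergence from the paper is in how you obtain the inclusion $I^{**}((RM(p,0))^{**})\subset RM(p,\infty)$ together with the inequality $\rho_{p,\infty}(I^{**}x)\le\|x\|$. The paper argues abstractly: by Goldstine's theorem the unit ball of $RM(p,0)$ is weak-$*$ dense in that of $(RM(p,0))^{**}$, and since $(RM(p,0))^{*}$ is separable (a consequence of Lemma~\ref{densityBergRMP0} and the separability of $(A^{p})^{*}$) the weak-$*$ topology on the ball is metrizable, so one may choose an approximating \emph{sequence} $\{\psi_n\}\subset RM(p,0)$ with $\rho_{p,\infty}(\psi_n)\le\|m\|$; pointwise convergence $\psi_n(z)\to(I^{**}m)(z)$ plus Fatou's lemma then gives $\rho_{p,\infty}(I^{**}m)\le\|m\|$. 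You bypass Goldstine and the separability/metrizability step entirely by testing $I^{**}x$ against the radial functionals $\Lambda_{h,\theta}$ and recovering the $L^{p}$ norm of each radial slice by $L^{p}$--$L^{p'}$ duality. This is more hands-on and arguably cleaner in that it does not need metrizability of the weak-$*$ ball; the Riemann-sum approximation you flag as the technical crux is indeed the only delicate point, and it goes through exactly because the bound $\|\delta'_{z}\|\lesssim(1-|z|)^{-1-1/p}$ from Proposition~\ref{main-properties} makes $r\mapsto\delta_{re^{i\theta}}$ Lipschitz on $[0,\rho]$ uniformly over $B_{RM(p,0)}$. For the surjectivity and reverse inequality, and for the final Montel argument, you follow essentially the same path as the paper (the paper defines $\hat\psi$ directly via $\hat\psi(x^{*}):=\lim_n x^{*}(\psi_n)$ rather than extracting a Banach--Alaoglu subnet, but the two are interchangeable here).
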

\begin{proof}
Since the set of all polynomials is dense both in $RM(p,0)$ and $A^p$, then $RM(p,0)$ is dense in $A^p$. Furthermore, it follows that $I^{\ast}: (A^p)^{\ast}\rightarrow (RM(p,0))^{\ast}$ is continuous and injective.
	
Since $I^{\ast}((A^{p})^{\ast})$ is dense in $(RM(p,0))^{\ast}$  (see Lemma~\ref{densityBergRMP0}), we obtain that $${I^{\ast\ast}:(RM(p,0))^{\ast\ast}\rightarrow (A^{p})^{\ast\ast}\cong A^p}$$ is continuous and injective. Moreover, it is easy to see that $I^{\ast\ast}$ acts as the identity on $RM(p,0)$. 
	
Let us show that $RM(p,\infty)\subset I^{\ast\ast}((RM(p,0))^{\ast\ast})$. Given $\psi\in RM(p,\infty)$, by Lemma \ref{aprox-in-Bergman}, there exists a sequence $\{\psi_n\}\subset RM(p,0)$ such that $\psi_n\rightarrow \psi$ in $A^p$ and $\limsup_{n}\rho_{p,\infty}(\psi_n)\leq \rho_{p,\infty}(\psi)$. We can define $\hat{\psi}\in (RM(p,0))^{\ast\ast}$ of the following form
	\begin{align*}
	\hat{\psi}(x^{\ast}):=\lim_{n\rightarrow \infty} x^{\ast}(\psi_{n})
	\end{align*}
	for $x^\ast\in (RM(p,0))^{\ast}$. Notice that it is well-defined since $I^{\ast}((A^p)^\ast)$ is dense in $(RM(p,0))^\ast$ and the sequence $\{\psi_n\}$ is well-defined  and bounded over $I^{\ast}((A^p)^\ast)$. It is clear that $\psi_n \rightarrow \hat{\psi}$ in $\sigma((RM(p,0))^{\ast\ast},(RM(p,0))^{\ast})$. Hence,
	$I^{\ast\ast}(\psi_n)\rightarrow I^{\ast\ast}(\hat{\psi})$ in $(A^p,w^\ast)$.
	
	Moreover, as $\psi_n\rightarrow \psi$ on $A^p$ we have that $\psi_n=I^{\ast\ast}(\psi_n)\rightarrow \psi$ in $(A^p,w)$. From this and the reflexivity of $A^p$, we conclude that $I^{\ast\ast}(\hat{\psi})=\psi$. Moreover, 
	\begin{align}\label{ineqpsihat}
\|\hat{\psi}\|_{(RM(p,0))^{\ast\ast}}\leq\limsup_{n } \rho_{p,\infty}(\psi_{n})\leq \rho_{p,\infty}(\psi).
	\end{align}

	We turn our attention to show the inclusion $I^{\ast\ast}((RM(p,0))^{\ast\ast})\subset RM(p,\infty)$. Let $m\in (RM(p,0))^{\ast\ast}$ and $\psi=I^{\ast\ast}(m)\in A^p$. Using \cite[Proposition 4.1, Chapter V]{conway_course_1990} we have that the unit ball of $RM(p,0)$ is $w^\ast$-dense in the unit ball of $(RM(p,0))^{\ast\ast}$. Moreover, the weak$^\ast$-topology of $(RM(p,0))^{\ast\ast}$ is metrizable in the unit ball, because $(RM(p,0))^{\ast}$ is separable due to Lemma~\ref{densityBergRMP0} and the fact that $(A^p)^\ast$ is separable. We choose a sequence $\{\psi_n\}\subset RM(p,0)$ with $\sup_{n}\rho_{p,\infty}(\psi_{n})\leq \|m\|$ such that $\psi_n\xrightarrow{w^\ast} m$. 
	Therefore, it follows that $x^{\ast}(\psi_n)\rightarrow x^{\ast}(\psi)$ for all $x^{\ast}\in (A^p)^{\ast}$. Bearing in mind that  $\delta_z\in (A^p)^{\ast}$ for all $z\in \D$, we have that $\psi_n(z)\rightarrow \psi(z)$ for all $z\in \D$. Using Fatou's lemma we obtain that
	\begin{align}\label{ineqpsi}
	\rho_{p,\infty}(\psi)\leq \liminf_{n} \rho_{p,\infty}(\psi_n)\leq \|m\|_{(RM(p,0))^{\ast}}.
	\end{align}
	From \eqref{ineqpsihat} and \eqref{ineqpsi} we obtain (2).
	
	Moreover, if $\{x_n\}\in (RM(p,0))^{\ast\ast}\subset (A^p)^{\ast\ast}=A^p$ that converges to $0$ in the weak-$^{*}$ topology, then $\{I^{\ast\ast}(x_{n})\}\subset A^p$ converges pointwise to $0$, and this implies that $\{I^{\ast\ast}(x_{n})\}$ converges to $0$ uniformly on compact subsets of the unit disc.
\end{proof}

\section{Boundedness and compactness of the integration operator $T_g$}

In this section we begin to study the main issue of this paper, that is, the operators of the form
\begin{align*}
T_{g}(f)(z)=\int_{0}^{z} f(\zeta) g'(\zeta)\ d\zeta,
\end{align*}
where $g\in \mathcal H(\D)$, in the spaces $RM(p,q)$.

 \begin{lemma}\label{bloch}
 Let  $1\leq p<+\infty$. Then $\mathcal{B}\subset RM(p,0)$.
 \end{lemma}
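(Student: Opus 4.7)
The plan is to invoke Proposition~\ref{rhoLPEQ}, which gives a sufficient condition for membership in $RM(p,0)$ in terms of the derivative. Specifically, it is enough to verify that
\begin{align*}
\lim_{\rho\to 1^-}\sup_{\theta\in[0,2\pi]}\left(\int_\rho^1 |g'(re^{i\theta})|^p(1-r)^p\, dr\right)^{1/p}=0.
\end{align*}

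The key observation is that the Bloch condition provides pointwise control of $g'$: for any $g\in\mathcal{B}$ one has $(1-|z|)|g'(z)|\leq (1-|z|^2)|g'(z)|\leq \|g\|_{\mathcal{B}}$ for every $z\in\D$. Raising this to the $p$-th power and integrating, I get
\begin{align*}
\int_\rho^1 |g'(re^{i\theta})|^p (1-r)^p\, dr \leq \|g\|_{\mathcal{B}}^p\int_\rho^1 dr = \|g\|_{\mathcal{B}}^p\,(1-\rho),
\end{align*}
uniformly in $\theta$. As $\rho\to 1^-$ the right-hand side tends to $0$, so the hypothesis of Proposition~\ref{rhoLPEQ} is satisfied and $g\in RM(p,0)$.

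There is no real obstacle here; the whole argument is a one-line estimate once the Littlewood--Paley criterion from Proposition~\ref{rhoLPEQ} is in hand. (Alternatively, one could bypass that proposition and argue directly from the standard Bloch growth bound $|g(z)|\lesssim \log\frac{1}{1-|z|}+|g(0)|$, noting that the function $r\mapsto \log\frac{1}{1-r}$ lies in $L^p[0,1]$ so that $\int_\rho^1 (\log\frac{1}{1-r})^p\, dr\to 0$ as $\rho\to 1$, but the route via Proposition~\ref{rhoLPEQ} is shorter and more in keeping with the tools already developed in Section~3.)
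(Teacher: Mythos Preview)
Your proof is correct. The paper, however, takes the direct route you sketch in your parenthetical remark: it invokes the logarithmic growth bound $|g(z)|\le M\log\frac{1}{1-|z|}+C$ for Bloch functions and then estimates $\sup_\theta\bigl(\int_\rho^1|g(se^{i\theta})|^p\,ds\bigr)^{1/p}$ by the $L^p$-tail of $\log\frac{1}{1-s}$, which vanishes as $\rho\to1$. Your argument instead goes through Proposition~\ref{rhoLPEQ}, using the Bloch bound on $g'$ directly; this is slightly slicker once that proposition is in hand, and it avoids quoting the logarithmic growth estimate. The paper's approach is more self-contained (it does not appeal to the Littlewood--Paley machinery of Section~3), while yours makes better use of the tools already developed.
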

 \begin{proof}
 Let $g\in\mathcal{B}$. Then there are $M,C>0$ such that
\begin{align}\label{desi1comp}
|g(z)|\leq M \ln\left(\frac{1}{1-|z|}\right)+C, \quad z\in \D, \textrm{\quad\cite[Proposition 1, p. 43]{duren_schuster_2004}}
\end{align}  
and we have that 
\begin{align*}
\sup_{\theta} \left(\int_{\rho}^{1} |g(se^{i\theta})|^{p}\ ds\right)^{1/p}&\leq \left(\int_{\rho}^{1} \left(M \ln\left(\frac{1}{1-s}\right)+C\right)^{p}\ ds\right)^{1/p} \\
&\leq 	M \left(\int_{\rho}^{1} \ln^{p}\left(\frac{1}{1-s}\right)\ ds\right)^{1/p}+ C(1-\rho)^{1/p} \rightarrow 0
\end{align*}
when $\rho \rightarrow 1$, since $\ln^{p}\left(\frac{1}{1-s}\right)$ is integrable.
Therefore, $\mathcal{B}\subset RM(p,0)$.
\end{proof}
 
By Proposition \ref{main-properties}(7), every bounded sequence in $RM(p,q)$ is uniformly bounded on each compact set of the unit disc and then it is a normal family. Thus a standard argument shows that:
 
\begin{lemma}\label{prop2comp}
	Let $1\leq p, q\leq +\infty$  and let $T:RM(p,q)\to X$ be a linear and bounded operator, where $X$ is a Banach space.
	\begin{enumerate}
\item  If for every bounded sequence $\{f_n\}\subset RM(p,q)$ uniformly convergent on compact sets to $0$ it holds that  $||T(f_n)||\rightarrow 0$, then the operator $T$ is compact.
\item Assuming that $T=T_{g}$ for some holomorphic function $g$ and $X=RM(p,q)$, then $T$ is  compact in $RM(p,q)$ if and only if for every bounded sequence $\{f_n\}\subset RM(p,q)$ uniformly convergent on compact sets to $0$  it  holds that  $\rho_{p,q}(T_{g}(f_n))\rightarrow 0$.
\end{enumerate}
\end{lemma}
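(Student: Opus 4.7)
The plan is to derive both parts from a single Montel-type observation: by Proposition~\ref{main-properties}(7), a norm-bounded sequence in $RM(p,q)$ is uniformly bounded on each compact subset of $\D$, hence is a normal family, and Montel's theorem yields a subsequence converging uniformly on compact subsets of $\D$ to some $f\in \mathcal{H}(\D)$.

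For statement~(1), let $\{f_n\}\subset RM(p,q)$ with $\rho_{p,q}(f_n)\leq M$, and extract a subsequence $\{f_{n_k}\}$ converging uniformly on compact subsets to some $f\in \mathcal{H}(\D)$. I would first verify that $f\in RM(p,q)$ via the lower-semicontinuity estimate
\[
\rho_{p,q}(f)\leq \liminf_{k}\rho_{p,q}(f_{n_k})\leq M.
\]
When $p,q<\infty$, this follows by applying Fatou's lemma in the radial variable, then composing with the continuous increasing map $x\mapsto x^{q/p}$ (which preserves $\liminf$ for non-negative sequences), and finally applying Fatou in the angular variable; the boundary cases $p=\infty$ or $q=\infty$ are handled identically by replacing the appropriate outer integral with a supremum (or essential supremum). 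Setting $h_k:=f_{n_k}-f$, the sequence $\{h_k\}$ is bounded in $RM(p,q)$ and converges to $0$ uniformly on compact subsets, so by hypothesis $\|T(h_k)\|_X\to 0$, i.e.\ $T(f_{n_k})\to T(f)$ in $X$. Hence the image of any bounded sequence admits a norm-convergent subsequence and $T$ is compact.

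For statement~(2), the forward implication is the specialization of part~(1) to $X=RM(p,q)$ and $T=T_g$. For the converse, assume $T_g$ is compact and let $\{f_n\}$ be bounded in $RM(p,q)$ with $f_n\to 0$ uniformly on compact subsets. If $\rho_{p,q}(T_g(f_n))\not\to 0$, choose $\e>0$ and a subsequence $\{f_{n_k}\}$ with $\rho_{p,q}(T_g(f_{n_k}))\geq \e$. The compactness of $T_g$ then provides a further subsequence $\{f_{n_{k_j}}\}$ such that $T_g(f_{n_{k_j}})\to h$ in $RM(p,q)$ for some $h$; by Proposition~\ref{main-properties}(7) this convergence is uniform on compact subsets, hence pointwise. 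However, for fixed $z\in\D$ the uniform convergence of $f_{n_{k_j}}$ to $0$ on the compact segment $[0,z]$ together with the boundedness of $g'$ there gives
\[
T_g(f_{n_{k_j}})(z)=\int_{0}^{z}f_{n_{k_j}}(w)g'(w)\,dw\longrightarrow 0,
\]
forcing $h\equiv 0$ and contradicting $\rho_{p,q}(h)\geq \e$.

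No single step is genuinely delicate; the only bookkeeping point I foresee is writing the Fatou-type lower-semicontinuity $\rho_{p,q}(f)\leq \liminf_{k}\rho_{p,q}(f_{n_k})$ uniformly over the range $1\leq p,q\leq +\infty$, which is a routine case split between the finite and infinite values of the parameters.
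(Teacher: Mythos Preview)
Your proof is correct and is precisely the ``standard argument'' the paper alludes to: the paper does not actually write out a proof of this lemma, but merely remarks that Proposition~\ref{main-properties}(7) makes bounded sequences into normal families, after which Montel's theorem plus the Fatou/lower-semicontinuity estimate and the pointwise identification of the limit give the result. Your write-up fills in exactly these details, with the correct handling of both directions in~(2).
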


\noindent\textbf{Boundedness and compactness.} 
It is well-known that, for $1\leq p<+\infty$, the operator $T_g$ is bounded (resp. compact) over the Hardy spaces $H^p$ if and only if $g\in BMOA$ (resp. $VMOA$) \cite{pommerenke_schlichte_1977,aleman_integral_1995}, and the operator $T_g: A^p\rightarrow A^p$ is bounded (resp. compact) if and only if $g$ belongs to the Bloch space $\mathcal{B}$ (resp. the little Bloch space $\mathcal{B}_0$) \cite{aleman_integration_1997}. Next result completes these characterizations to $RM(p,q)$ whenever $(p,q)\neq (+\infty,+\infty)$.

\begin{theorem}\label{thboundedopinteg}
	Let $1\leq p <+\infty$, $1\leq q\leq +\infty$.  Then
\begin{enumerate}
\item The operator $T_g:RM(p,q)\rightarrow RM(p,q)$ is bounded if and only if $g\in \mathcal{B}$.
\item The operator $T_g:RM(p,0)\rightarrow RM(p,0)$ is bounded if and only if $g\in \mathcal{B}$.
\item The operator $T_g:RM(p,q)\rightarrow RM(p,q)$ is compact if and only if $g\in \mathcal{B}_{0}$.
\item The operator $T_g:RM(p,0)\rightarrow RM(p,0)$ is compact if and only if $g\in \mathcal{B}_{0}$.
\end{enumerate}
\end{theorem}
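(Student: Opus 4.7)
All four statements hinge on the identity $(T_g f)'(z)=f(z)g'(z)$ together with $T_g f(0)=0$, which, via Proposition \ref{bergtypine}, gives
\[
\rho_{p,q}(T_g f) \;\leq\; p\,\rho_{p,q}\bigl((1-|z|)g'(z)f(z)\bigr).
\]
If $g\in\mathcal{B}$ the multiplier is bounded by $\|g\|_\mathcal{B}$, so $\rho_{p,q}(T_g f)\leq p\|g\|_\mathcal{B}\rho_{p,q}(f)$, proving the boundedness part of (1). For (2) we additionally need $T_g f\in RM(p,0)$ when $f\in RM(p,0)$: apply Proposition \ref{rhoLPEQ} to $h=T_g f$, noting that
\[
\sup_\theta\Bigl(\int_\rho^1 |f g'|^p(1-r)^p\,dr\Bigr)^{1/p} \;\leq\; \|g\|_\mathcal{B}\,\sup_\theta\Bigl(\int_\rho^1 |f|^p\,dr\Bigr)^{1/p}\;\longrightarrow 0
\]
as $\rho\to 1^-$.

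\textbf{Necessity in (1)--(4) via peak test functions.} The common tool is the family
\[
f_z(w)\;:=\;\frac{(1-|z|)^{\beta-1/p-1/q}}{(1-\overline{z}w)^{\beta}},\qquad \beta>\tfrac{1}{p}+\tfrac{1}{q},
\]
for which standard Cauchy-kernel estimates of the radial $L^p$-integrals and the boundary $L^q$-integrals give $\rho_{p,q}(f_z)\asymp 1$ and $|f_z(z)|\asymp(1-|z|)^{-1/p-1/q}\asymp\|\delta_z\|_{(RM(p,q))^\ast}$. If $T_g$ is bounded, Proposition \ref{main-properties}(2) yields
\[
|f_z(z)||g'(z)|\;=\;|(T_g f_z)'(z)|\;\leq\;\|\delta_z'\|_{(RM(p,q))^\ast}\|T_g\|\,\rho_{p,q}(f_z)\;\lesssim\;\frac{\|T_g\|}{(1-|z|)^{1/p+1/q+1}},
\]
so dividing by $|f_z(z)|$ gives $(1-|z|)|g'(z)|\lesssim\|T_g\|$, i.e.\ $g\in\mathcal{B}$; Proposition \ref{main-properties}(3) makes the reasoning for (2) identical on $RM(p,0)$. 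If moreover $T_g$ is compact, the condition $\beta>1/p+1/q$ ensures $f_z\to 0$ uniformly on compacta of $\D$ as $|z|\to 1$. Lemma \ref{prop2comp} then yields $\rho_{p,q}(T_g f_z)\to 0$, and the same derivative bound upgrades $(1-|z|)|g'(z)|$ to $o(1)$, proving $g\in\mathcal{B}_0$ and hence the necessity parts of (3) and (4).

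\textbf{Sufficiency in (3), (4).} Assume $g\in\mathcal{B}_0$ and let $\{f_n\}\subset RM(p,q)$ be bounded with $f_n\to 0$ uniformly on compacta. Given $\epsilon>0$, pick $\rho\in(0,1)$ with $(1-|z|)|g'(z)|<\epsilon$ for $|z|\geq\rho$, split the function $(1-|z|)g'(z)f_n(z)$ into its restrictions to $\{|z|\leq\rho\}$ and $\{|z|>\rho\}$, and apply the triangle inequality in $\rho_{p,q}$. The near piece tends to $0$ as $n\to\infty$ because $(1-|z|)|g'(z)|$ is bounded on the compact set and $f_n\to 0$ uniformly there, while the far piece is bounded by $\epsilon\,\rho_{p,q}(f_n)\leq C\epsilon$. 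Feeding this into the Littlewood--Paley bound above gives $\rho_{p,q}(T_g f_n)\to 0$, and Lemma \ref{prop2comp} delivers compactness of $T_g$ on $RM(p,q)$; for (4) the argument is identical in $RM(p,0)$, with the additional remark that each $f_z\in H^\infty\subset RM(p,0)$ automatically.

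\textbf{Main obstacle.} The technical heart of the proof is the two-sided estimate $\rho_{p,q}(f_z)\asymp 1$ uniformly in $z\in\D$, which must be verified across every range, including the edge values $p=1$, $q=1$ and $q=\infty$. This reduces to the standard estimates of $\int_0^{2\pi}|1-\overline z e^{i\theta}|^{-\gamma}\,d\theta$ combined with $\int_0^1|1-\overline z re^{i\theta}|^{-\beta p}\,dr$, but the correct size is obtained only for $\beta>1/p+1/q$ (with the convention $1/\infty=0$); the same constraint is what guarantees that the $f_z$ vanish uniformly on compacta as $|z|\to 1$, which is what the compactness half requires. Once the peak-function estimates are in hand the remainder is a clean assembly of Proposition \ref{bergtypine}, the evaluation-functional estimates in Proposition \ref{main-properties}, the characterization of $RM(p,0)$ in Proposition \ref{rhoLPEQ}, and the normal-family compactness criterion of Lemma \ref{prop2comp}.
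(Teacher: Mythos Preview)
Your argument is correct, but it diverges from the paper's in three places, and the trade-offs are worth noting. For the necessity of boundedness the paper avoids your peak functions entirely: it simply picks, for each $z$, any $f\in B_{RM(p,q)}$ with $|f(z)|\geq \tfrac12\|\delta_z\|$, which exists by the definition of the norm of $\delta_z$; this sidesteps the kernel estimate $\rho_{p,q}(f_z)\asymp 1$ that you (rightly) flag as the main technical point. For the necessity of compactness the paper uses the adjoint identity $T_g^*\delta'_z=g'(z)\delta_z$ together with the weak-$*$ convergence $\tfrac{(1-|z|)\delta'_z}{\|\delta_z\|}\to 0$, which relies on density of polynomials and therefore only works directly for $q<\infty$ (the case $q=\infty$ is then reduced to $RM(p,0)$); your peak-function route handles all $q$ at once but at the cost of having to prove the kernel estimate. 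For the sufficiency in (2) the paper does not invoke Proposition~\ref{rhoLPEQ}; instead it observes that $T_g(\text{polynomial})\in\mathcal{B}\subset RM(p,0)$ by Lemma~\ref{bloch} and uses density. Finally, for the sufficiency in (3)--(4) the paper approximates $g\in\mathcal{B}_0$ by polynomials and uses $\|T_{g-h}\|\lesssim\|g-h\|_\mathcal{B}$, whereas your near/far splitting of $(1-|z|)g'f_n$ is more direct. One small slip: the remark that $f_z\in H^\infty\subset RM(p,0)$ belongs in the necessity paragraph (where you need the test functions to lie in $RM(p,0)$), not in the sufficiency paragraph where you placed it.
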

\begin{proof}
(1)	Assume that $g\in \mathcal{B}$. If $q<+\infty$, by Proposition~\ref{bergtypine}, there is a constant $C_{p}$ such that
	\begin{equation*}
	\begin{split}
	\rho_{p,q}^{q}(T_g(f))&\leq C_{p}^{q}\int_{0}^{2\pi}\left(\int_{0}^{1} |f(re^{i\theta})g'(re^{i\theta})|^{p} (1-r^2)^p\ dr\right)^{q/p}\frac{d\theta}{2\pi}\\
	&\leq C_{p}^{q} \|g\|_{\mathcal{B}}^{q}\int_{0}^{2\pi}\left(\int_{0}^{1} |f(re^{i\theta})|^{p}\ dr\right)^{q/p}\frac{d\theta}{2\pi}=C_{p}^{q} \|g\|_{\mathcal{B}}^{q}\, \rho_{p,q}(f)^{q},
	\end{split}
	\end{equation*}
	getting the boundedness of $T_{g}$ if $q<+\infty$. A similar argument works if $q=+\infty$.
	
	Conversely, assume that $T_{g}$ is bounded on $RM(p,q)$.  Fix $z\in \D$. By Proposition \ref{main-properties}(3), there is $C>0$ such that if $f\in B_{RM(p,q)}$, then
\begin{equation*}
\begin{split}
|g'(z)| |f(z)|&=|T_{g}(f)'(z)|
\leq \rho_{p,q}(T_{g}(f))\|\delta'_z\|_{(RM(p,q))^{\ast}}\\
&\leq \Vert T_{g}\Vert \, \rho_{p,q}(f)\|\delta'_z\|_{(RM(p,q))^{\ast}} \leq C \Vert T_{g}\Vert \, \rho_{p,q}(f) \|\delta_z\|_{(RM(p,q))^{\ast}}\frac{1}{1-|z|}.
	\end{split}
	\end{equation*}
We can choose $f\in RM(p,q)$, with $\rho_{p,q}(f)\leq 1$,  such that   $\|\delta_z\|_{(RM(p,q))^{\ast}}\leq 2 |f(z)|$. Therefore,
	$$
	|g'(z)|(1-|z|^{2})\leq C \Vert T_{g}\Vert \, \rho_{p,q}(f) \frac{\|\delta_z\|_{(RM(p,q))^{\ast}}}{|f(z)|}\leq 2C \Vert T_{g}\Vert ,
	$$	
	so that $g\in \mathcal{B}$ and (1) holds. 

\noindent (2) Assume that $g\in \mathcal{B}$. By (1) we know that $T_{g}$ is bounded from $RM(p,0)$ into $RM(p,\infty)$. If $f$ is a polynomial and $z\in \D$, then 
$$
(1-|z|^{2}) |T_{g}(f)'(z)|\leq \Vert g\Vert_{\mathcal B} \Vert f\Vert_{\infty}.
$$
That is $T_{g}(f)\in \mathcal B$ and, by Lemma \ref{bloch}$, T_{g}(f)\in RM(p,0)$. The density of the polynomials in $RM(p,0)$ and the boundedness of $T_{g}$ from $RM(p,0)$ into $RM(p,\infty)$ (by (1)) implies that $T_{g}(RM(p,0))\subset RM(p,0)$. 

Conversely, if $T_{g}$ is bounded on $RM(p,0)$ we can argue as in the proof of statement (1) using Proposition \ref{main-properties}(4) instead of Proposition \ref{main-properties}(3).

\noindent (3) and (4) We start by proving that if $T_g$ is compact in $RM(p,q)$, with $q<+\infty$, then $g\in\mathcal{B}_{0}$. Take $f\in RM(p,q)$, then 
\begin{align*}
\langle f,T_{g}^{\ast}(\delta'_{z})\rangle=\langle T_g(f),\delta'_{z}\rangle =g'(z)f(z)=g'(z)\langle f,\delta_z\rangle ,
\end{align*}
multiplying by $\frac{1-|z|}{\|\delta_z\|}$ we obtain
\begin{align*}
\langle f, T_{g}^{\ast}\left(\frac{\delta'_{z} (1-|z|)}{\|\delta_{z}\|}\right)\rangle =g'(z)(1-|z|) \langle f,\frac{\delta_{z}}{\|\delta_{z}\|}\rangle.
\end{align*}
Hence, it follows that
\begin{align*}
|g'(z)|(1-|z|) \left|\langle f,\frac{\delta_z}{\|\delta_{z}\|}\rangle \right|\leq \rho_{p,q}(f) \left\|T_{g}^{\ast}\left(\frac{\delta'_{z} (1-|z|)}{\|\delta_{z}\|}\right)\right\|_ {(RM(p,q))^{\ast}}.
\end{align*}
Taking supremum in $f\in B_{RM(p,q)}$, we have
\begin{align*}
|g'(z)|(1-|z|) \leq \left\|T_{g}^{\ast}\left(\frac{\delta'_{z} (1-|z|)}{\|\delta_{z}\|}\right)\right\|_ {(RM(p,q))^{\ast}}.
\end{align*}

We claim that   $\frac{\delta'_{z} (1-|z|)}{\|\delta_{z}\|}\xrightarrow{w^{\ast}} 0$ when $|z|\rightarrow 1$. Assuming the claim holds, the compactness of $T_g$ implies that $$\left\|T_{g}^{\ast}\left(\frac{\delta'_{z} (1-|z|)}{\|\delta_{z}\|}\right)\right\|_ {(RM(p,q))^{\ast}}\rightarrow 0,$$
as $|z|\to 1$, so that $g\in \mathcal{B}_{0}$.

Let us see the claim. 
Take $p$ a  polynomial. Then 
\begin{align*}
\frac{	|\delta'_{z}(p)| (1-|z|)}{\|\delta_{z}\|}\asymp \frac{|p'(z)|(1-|z|)}{(1-|z|)^{-\frac{1}{p}-\frac{1}{q}}}\lesssim \|p'\|_{\infty} (1-|z|)^{1+\frac{1}{p}+\frac{1}{q}}\rightarrow 0
\end{align*}
as $|z|\rightarrow 1$. The density of the polynomials in $RM(p,q)$ and the fact that 
$
\frac{\|\delta'_{z}\|(1-|z|)}{\|\delta_{z}\|}\lesssim 1,
$
show that
$
\frac{\delta'_{z} (1-|z|)}{\|\delta_{z}\|} \xrightarrow{w^{\ast}} 0
$
as $|z|\rightarrow 1$. So that the claim holds.

The same argument shows that if $T_{g}:RM(p,0)\to RM(p,0)$ is compact,  then $g\in \mathcal{B}_{0}$.

Assume now that $q=+\infty$. The compactness of $T_{g}:RM(p,\infty)\to RM(p,\infty)$ implies $g\in \mathcal B$ so that $T_{g}:RM(p,0)\to RM(p,0)$ is bounded and, clearly compact. Thus $g\in \mathcal{B}_{0}$.

Let us see that if $g\in \mathcal{B}_{0}$, then the operator $T_{g}$ is compact. Assume for the moment that $g$ is a polynomial. Take $\{f_k\}$ a  sequence in the unit ball of $ RM(p,q)$ uniformly convergent to $0$ on compact sets. Let $\varepsilon>0$. There is $N\in \N$  such that $|f_{k}(z)|\leq \varepsilon$ for all $|z|\leq \rho:=1-\varepsilon$ and $k\geq N$. Fix  $z=re^{i\theta}$. If $r\leq\rho$, then 
$$
\left|T_{g}f_{k}(r e^{i\theta} )\right|\leq \|g'\|_{\infty} \int_{0}^{r} |f_{k}(se^{i\theta})|\ ds\leq  \|g'\|_{\infty} \varepsilon 
$$
while, if $r> \rho$, then 
\begin{align*}
\left|T_{g}f_{k}(r e^{i\theta} )\right|\leq  \|g'\|_{\infty}  \int_{0}^{r} |f_{k}(se^{i\theta})|\ ds\leq  \|g'\|_{\infty}  \left(\int_{0}^{1} |f_{k}(se^{i\theta})|^{p}\ ds\right)^{1/p}.
\end{align*}
Therefore
\begin{align*}
\left(\int_{0}^{1} |T_{g}(f_{k})(re^{i\theta})|^{p}\ dr\right)^{1/p}\leq  \|g'\|_{\infty}  \left(\varepsilon \rho^{1/p} + (1-\rho )^{1/p}\left(\int_{0}^{1} |f_{k}(se^{i\theta})|^{p}\ ds\right)^{1/p} \right).
\end{align*}
Hence, for all $k\geq N$,
\begin{align*}
\rho_{p,q}(T_{g}(f_{k}))&\leq  \|g'\|_{\infty}  \varepsilon \rho^{1/p} +  \|g'\|_{\infty}  (1-\rho )^{1/p} \rho_{p,q}(f_{k})\\
&\leq  \|g'\|_{\infty}  (\varepsilon +(1-\rho)^{1/p})=\|g'\|_{\infty}  (\varepsilon +\varepsilon^{1/p}).
\end{align*}
Therefore $\lim_{k}\rho_{p,q}(T_{g}(f_{k}))=0.$ By Proposition~\ref{prop2comp}, $T_{g}$ is compact on $RM(p,q)$. 

If $g\in\mathcal{B}_{0}$, there is a sequence of polynomials $\{g_{n}\}$ such that $\lim_{n}\Vert g-g_{n}\Vert _{\mathcal{B}}=0$. Moreover, there is a constant $C=C(p)$ (see the proof of statement (1)) such that 
\begin{align*}
\|T_{g}-T_{g_{n}}\|= \|T_{g-g_{n}}\|\leq C_p \|g-g_{n}\|_{\mathcal{B}}\rightarrow 0.
\end{align*}
Since $T_{g_{n}}$ is compact for all $n$, then so is $T_{g}$.

The same argument works in $RM(p,0)$ so that we are done.
\end{proof}

%One can see in the following proposition that if $g\in \mathcal{B}_0$ then  $T_{g}(RM(p,\infty))\subset RM(p,0)$. 
\begin{remark}
	Let $1\leq p<+\infty$. If $g\in\mathcal{B}_{0}$ then $T_{g}(RM(p,\infty))\subset RM(p,0)$. 
Indeed, if  $f\in RM(p,\infty)$ and $h$ is  a polynomial, then for all $r\in(0,1)$ we have that 
		\begin{align*}
		|T_{h}f(re^{{i\theta}})|\leq \Vert h'\Vert_{\infty}\ \rho_{p,\infty}(f).
		\end{align*}
		That is, $T_{h}f\in H^{\infty}\subset RM(p,0)$. 
		Hence, using density of polynomials in $\mathcal{B}_{0}$ and the estimate $\|T_{g}\|\leq C_{p} \|g\|_{\mathcal{B}}$ (see the proof of statement (1) in above theorem), we can prove that if $g\in\mathcal{B}_{0}$ then $T_{g}(RM(p,\infty))\subset RM(p,0)$, because $RM(p,0)$ is closed in $RM(p,\infty)$ and
		\begin{align*}
		\|T_{g}-T_{h_{n}}\|=\|T_{g-h_{n}}\|\leq C_{p} \|g-h_{n}\|_{\mathcal{B}}\rightarrow 0,
		\end{align*}
		where $h_{n}$ are polynomials such that $\|g-h_{n}\|_{\mathcal{B}}\rightarrow 0$.
\end{remark}

However, the reverse implication does not hold as next example shows.  That is, the compactness cannot be characterized  by the property of sending the big-O space into the little-o space, 
despite what happens in other spaces of holomorphic functions (see, i.e., \cite{Arevalo-Contreras-Piazza} for mixed norm spaces, \cite{BCHMP} for weighted Banach spaces, and \cite{BCDMPS} for the Bloch space and BMOA).
%To show this, Proposition~\ref{main-properties}(7) will be very useful.

\begin{example}\label{ConjInt}
	Let $1\leq p<+\infty$ and $g(z)=-\log(1-z)$. Then  $g\in \mathcal{B}\setminus\mathcal{B}_{0}$ and $T_{g}(RM(p,\infty))\subset  RM(p,0)$.
	\end{example}
\begin{proof}
Fix $f\in RM(p,\infty)$ such that $\rho_{p,\infty}(f)\leq 1$. By Proposition~\ref{rhoLPEQ}, in order to prove that $T_{g}(f)\in RM(p,0)$, it is enough to show
	\begin{align*}
	\lim\limits_{\rho\rightarrow 1^{-}} \sup_{\theta} \left(\int_{\rho}^{1}|f(re^{i\theta})g'(re^{i\theta})|^{p}(1-r)^{p}\ dr\right)^{1/p}=0.
	\end{align*}
	
Suppose by contradiction that there are a constant $c>0$ and sequences $\{\rho_{k}\}\rightarrow 1$ and $\{\theta_{k}\}$  in $(-\pi,\pi)$ such that
	\begin{align*}
	\left(\int_{\rho_{k}}^{1} |f(re^{i\theta_k})|^{p}  \frac{(1-r)^{p}}{|1-re^{i\theta_k}|^{p}}\ dr\right)^{1/p}>c.
	\end{align*}
	
Notice that the sequence $\{\theta_k\}$ must converge to $0$. Indeed, using that $|1-e^{i\theta_k}|\leq 2|1-re^{i\theta_k}| $ and that $\rho_k\rightarrow 1$ we have
	\begin{align*}
	c<\left(\int_{\rho_{k}}^{1} |f(re^{i\theta_k})|^{p}  \frac{(1-r)^{p}}{|1-re^{i\theta_k}|^{p}}\ dr\right)^{1/p}<2\left(\int_{0}^{1} |f(re^{i\theta_k})|^{p} \frac{(1-\rho_{k})^{p}}{|1-e^{i\theta_{k}}|^{p}}\ dr\right)^{1/p}<2\frac{(1-\rho_{k})}{|1-e^{i\theta_{k}}|}
	\end{align*}
	so that  it holds that $\theta_k\rightarrow 0$.
	
	\noindent  Claim 1.
		There is  $\delta>0$ such that if  $\theta\in \left[\frac{-\pi}{4},\frac{\pi}{4}\right]\setminus\{0\}$ and $1>r>1-\delta |\theta|$, then $\left|\frac{1-r}{1-re^{i\theta}}\right|<c/4$.

	\noindent  Proof of Claim 1.
%		We are going to find $\delta>0$ such as $r>1-\delta|\theta|$ then $\left|\frac{1-r}{1-re^{i\theta}}\right|<\frac{c}{3}$. 
		Notice $\frac{1}{2}<\frac{1-\cos(\theta)}{\theta^2/2}<1$ for $\theta\in [-\pi/4,\pi/4]\setminus\{0\}$. Therefore if $r>1-\delta|\theta|$, then
		\begin{align*}
		\frac{(1-r)^2}{(1-r)^2+2r(1-\cos(\theta))}<\frac{1}{1+\frac{1-\delta|\theta|}{2\delta^2}}<\frac{1}{1+\frac{1-\delta\frac{\pi}{4}}{2\delta^2}}<\frac{c^2}{16}
		\end{align*}
		if $\delta$ is small enough and Claim 1 holds. 

By Claim 1 and the fact that $\rho_{p,\infty}(f)\leq 1$ we have 
	\begin{align*}
	\left(\int_{1-\delta|\theta_k|}^{1} |f(re^{i\theta_k})|^p \frac{(1-r)^p}{|1-re^{i\theta_k}|^p}\ dr\right)^{1/p}<\frac{c}{4}.
	\end{align*}
Therefore,
	\begin{align*}
	\int_{\rho_{k}}^{1} |f(re^{i\theta_k})|^{p}  \frac{(1-r)^{p}}{|1-re^{i\theta_k}|^{p}}\ dr-\int_{1-\delta|\theta_{k}|}^{1} |f(re^{i\theta_k})|^{p}  \frac{(1-r)^{p}}{|1-re^{i\theta_k}|^{p}}\ dr>c^p-\frac{c^p}{4^p}\geq\frac{3}{4}c^p>0,
	\end{align*}
so that $1-\delta|\theta_k|>\rho_k$.\newline
	Now, using again Claim 1, it is obtained that
	\begin{align*}
	&\int_{\rho_k}^{1-\delta |\theta_k|} |f(re^{i\theta_k})|^{p} \ dr\geq \int_{\rho_k}^{1-\delta |\theta_k|} |f(re^{i\theta_k})|^{p} \frac{(1-r)^p}{|1-re^{i\theta_{k}}|^{p}}\ dr\\
	&\qquad=\int_{\rho_k}^{1} |f(re^{i\theta_k})|^{p} \frac{(1-r)^p}{|1-re^{i\theta_{k}}|^{p}}\ dr-\int_{1-\delta |\theta_k|}^{1} |f(re^{i\theta_k})|^{p} \frac{(1-r)^p}{|1-re^{i\theta_{k}}|^{p}}\ dr>\frac{3}{4}c^p.
	\end{align*}
	
 	\noindent  Claim 2.
		There is $M>\delta$ and $k_{0}$ such that, if $\rho_k< 1-M|\theta_k|$, then
		\begin{align*}
		\left(\int_{\rho_k}^{1-M|\theta_k|} |f(re^{i\theta_k})|^{p}  dr\right)^{1/p}<\frac{c}{3}
		\end{align*}
		for  $k>k_0$.

	\noindent  Proof of Claim 2.
		By Proposition~\ref{main-properties}(2), there is a constant $C_{1}$ such that $|f'(w)|\leq C_{1}(1-|w|)^{-1-\frac{1}{p}}$, for all $w\in \D$. Take  $M>\max\left\{\delta,\frac{4C_1}{c}\right\}$ and $k$ such that $M|\theta_k|<1$ . Then 
		\begin{align*}
		\left(\int_{0}^{1-M|\theta_k|} |f(r)-f(re^{i\theta_k})|^{p}\ dr\right)^{1/p} &< \left(\int_{0}^{1-M|\theta_k|} \sup_{w\in [r,re^{i\theta_k}]}|f'(w)|^{p} |1-e^{i\theta_k}|^{p}\ dr\right)^{1/p}\\
		&\leq C_1|1-e^{i\theta_k }|\left(\int_{0}^{1-M|\theta_k|}\frac{dr}{(1-r)^{p+1}}\right)^{1/p}\\
		&<C_1|1-e^{i\theta_k }| \frac{1}{p^{1/p}M|\theta_k|}\leq\frac{C_1}{M}<\frac{c}{4}.
		\end{align*}
By the integrability of $|f(r)|^p$  in the interval $[0,1)$ and the fact that $\rho_k\rightarrow 1^-$, there exist $k_0$ such that for all $k>k_0$ we have 
		\begin{align*}
		&\left(\int_{\rho_k}^{1-M|\theta_k|} |f(re^{i\theta_k})|^{p}\ dr\right)^{1/p}\\
		&\qquad \leq \left(\int_{\rho_k}^{1-M|\theta_k|} |f(re^{i\theta_k})-f(r)|^{p}\ dr\right)^{1/p}+\left(\int_{\rho_k}^{1-M|\theta_k|} |f(r)|^{p}\ dr\right)^{1/p}
		\leq \frac{c}{4}+\frac{c}{13}<\frac{c}{3}
		\end{align*}
and Claim 2 holds.
	
	If $\rho_k<1-M|\theta_k|$, it follows
	\begin{align*}
	\int_{1-M|\theta_k|}^{1-\delta|\theta_k|} |f(re^{i\theta_k})|^{p}\ dr	&= \int_{\rho_{k}}^{1-\delta|\theta_k|} |f(re^{i\theta_k})|^{p}\ dr - \int_{\rho_k}^{1-M|\theta_k|} |f(re^{i\theta_k})|^{p}\ dr \\
	&>\frac{3}{4}c^p-\frac{c^p}{3^p}=c^p\left(\frac{3}{4}-\frac{1}{3^p}\right)\geq\frac{5c^p}{12}.
	\end{align*}
	If $\rho_k>1-M|\theta_k|$, we obtain
	\begin{align*}
	\int_{1-M|\theta_k|}^{1-\delta|\theta_k|} |f(re^{i\theta_k})|^{p}\ dr > \int_{\rho_k}^{1-\delta|\theta_k|} |f(re^{i\theta_k})|^{p}\ dr>\left(1-\frac{1}{4^p}\right)c^p
	>\frac{3c^p}{4}>\frac{5c^p}{12}.
	\end{align*}
Therefore, there exists $r_k\in (1-M|\theta_{k}|,1-\delta|\theta_k|)$ such that
	\begin{align*}
	|f(r_ke^{i\theta_k})|^{p} (M-\delta)|\theta_k|\geq \int_{1-M|\theta_k|}^{1-\delta|\theta_k|} |f(re^{i\theta_k})|^{p}\ dr>\frac{5c^p}{12}.
	\end{align*}
Thus
	\begin{align*}
	|f(r_ke^{i\theta_k})| (1-r_k)^{1/p}>\frac{5^{1/p}\delta^{1/p} c}{12^{1/p}(M-\delta)^{1/p}}
	\end{align*}
	what contradicts Proposition \ref{main-properties}(6). 
	\end{proof}

\section{Weak compactness of the integration operator $T_g$} 
\addtocontents{toc}{\protect\setcounter{tocdepth}{1}}
It is well-known that any weakly compact integration operator on the Hardy space $H^{1}(=RM(\infty,1))$ is compact (see \cite{La-Mi-Ni}). Since  the Bergman space $A^{1}(=RM(1,1))$ is isomorphic to $\ell _{1}$ (see \cite[p. 89]{wojtaszczyk_banach_1991}) and then it has the Schur property,  it also holds that if $T_{g}$ is weakly compact on $A^{1}$ then it is compact. In this section, we will show that this happens in other spaces of average radial integrability but not in all of them. When the weak compactness does not coincide with the compactness we will provide different characterizations.

Since $RM(p,q)$ is reflexive if either $1<p,q<+\infty$ or $p=+\infty$ and $1<q<+\infty$, the problem we are dealing with in this section it is only interesting in the next three cases:
\begin{itemize}
\item $1\leq p\leq +\infty$ and $q=+\infty$;
\item $p=1$ and $1\leq q\leq +\infty$;
\item $1\leq p\leq +\infty$ and $q=1$.
\end{itemize} 

 There is a useful characterization of the weak compactness of $T_{g}$ in terms of the norm convergence of certain convex combinations.

\begin{lemma}\label{lemmaweakcompact1q}
Let $1\leq p, q, \tilde p, \tilde q \leq +\infty$ and $X$ a Banach space.
\begin{enumerate}
\item Let $T:RM(p,q)\to X$ be a linear and bounded operator. Assume that  for every sequence $\{f_n\}$ in the unit ball of $RM(p,q)$  convergent to $0$ uniformly on compact sets of $\D$ satisfies that there exist $g_k\in \mathrm{co}\{f_k,f_{k+1},\dots\}$ such that $\Vert T g_k\Vert \rightarrow 0$ when $k\rightarrow \infty$. Then $T$ is weakly compact.

\item Assume that $T_g:RM(p,q)\rightarrow RM(\tilde p,\tilde q)$ is bounded.
Then $T_g:RM(p,q)\rightarrow RM(\tilde p,\tilde q)$ is weakly compact if and only if every sequence $\{f_n\}$ in the unit ball of $RM(p,q)$  convergent to $0$ uniformly on compact sets of $\D$ satisfies that there exist $g_k\in \mathrm{co}\{f_k,f_{k+1},\dots\}$ such that $\rho_{\tilde p,\tilde q}(T_{g} g_k)\rightarrow 0$ when $k\rightarrow \infty$.
\end{enumerate}
	\end{lemma}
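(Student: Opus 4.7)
For part (1), the plan is to apply Eberlein–Smulian and reduce the problem to showing that every bounded sequence $\{f_n\}\subset RM(p,q)$ has a subsequence with weakly convergent image under $T$. Since the pointwise bound $|f_n(z)|\leq \|\delta_z\|_{(RM(p,q))^{\ast}}\,\rho_{p,q}(f_n)$ from Proposition \ref{main-properties}(1) makes $\{f_n\}$ a normal family, I would extract $f_{n_k}\to f$ uniformly on compact subsets of $\D$ and use a standard Fatou-type argument on $\rho_{p,q}$ to check $f\in RM(p,q)$. Then $\{f_{n_k}-f\}$ is bounded and converges to $0$ uniformly on compacta, and the same is true for each of its subsequences; after rescaling to fit the unit ball, the hypothesis applies to every subsequence.

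The hard part will be upgrading the \emph{norm} convergence of convex combinations of $\{f_{n_k}-f\}$ to the \emph{weak} convergence $Tf_{n_k}\rightharpoonup Tf$. I would argue by contradiction: if some $\phi\in X^{\ast}$ satisfies $\phi(Tf_{n_k})\not\to \phi(Tf)$, pass to a subsequence along which $\phi(Tf_{n_{k_l}}-Tf)\geq \varepsilon>0$ (possibly replacing $\phi$ by $-\phi$). Applying the hypothesis to $\{f_{n_{k_l}}-f\}$ yields convex combinations $H_j=\sum_l \mu_{j,l}(f_{n_{k_l}}-f)$ with $\|TH_j\|\to 0$, whence $\phi(TH_j)\to 0$. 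But $\phi(TH_j)=\sum_l \mu_{j,l}\,\phi(Tf_{n_{k_l}}-Tf)\geq \varepsilon$, a contradiction. Hence $Tf_{n_k}\rightharpoonup Tf$ and $T$ is weakly compact by Eberlein–Smulian.

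For part (2), the ``if'' direction is immediate from (1). For the ``only if'' direction, assume $T_g$ is weakly compact and $\{f_n\}\subset B_{RM(p,q)}$ converges to $0$ uniformly on compacta. The formula $T_g f_n(z)=\int_0^{z} f_n(\zeta)g'(\zeta)\,d\zeta$ makes $T_g f_n\to 0$ uniformly on compacta, hence pointwise. By weak compactness, every subsequence of $\{T_g f_n\}$ admits a further weakly convergent subsequence whose limit $y\in RM(\tilde p,\tilde q)$ satisfies $y(z)=\delta_z(y)=\lim T_g f_{n_k}(z)=0$ for every $z\in\D$, using $\delta_z\in (RM(\tilde p,\tilde q))^{\ast}$ from Proposition \ref{main-properties}(1); thus $y=0$. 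The standard ``every subsequence has a further subsequence converging to the same limit'' argument then gives $T_g f_n\rightharpoonup 0$ in $RM(\tilde p,\tilde q)$.

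To produce the tail convex combinations, I would note that for each $k$ the sequence $\{T_g f_n\}_{n\geq k}$ still converges weakly to $0$, so by Mazur's theorem there exists $y_k\in\mathrm{co}\{T_g f_n:n\geq k\}$ with $\|y_k\|_{RM(\tilde p,\tilde q)}<1/k$. Writing $y_k=T_g g_k$ with $g_k\in\mathrm{co}\{f_n:n\geq k\}$ yields $\rho_{\tilde p,\tilde q}(T_g g_k)\to 0$, as required. The only subtle step in the whole argument is the contradiction argument in paragraph two promoting convex-combination norm nullity to weak convergence; the remaining steps are standard normal-family, Fatou, and Mazur manipulations.
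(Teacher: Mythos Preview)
Your argument is correct and mirrors the paper's proof: part~(1) proceeds by contradiction using a separating functional and the convex-combination hypothesis, and part~(2) identifies the weak limit as zero via point evaluations and then invokes Mazur. The only cosmetic slip is that in a complex Banach space ``replacing $\phi$ by $-\phi$'' is not enough to arrange $\phi(Tf_{n_{k_l}}-Tf)\geq\varepsilon$; one should multiply $\phi$ by a suitable unimodular constant and work with real parts, exactly as the paper does when it writes $\Re\bigl(\lambda(Th_{k_j})\bigr)\geq\delta$.
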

\begin{proof} 
Let us begin with (1).  Assume by contradiction that $T$ is not weakly compact. Then there is a bounded sequence $\{f_n\}$ such that $\{T f_{n}\}$ does not have weakly convergent subsequences.
	Applying Montel's theorem, there is a holomorphic function $f$ and a subsequence $\{f_{n_k}\}$ such that it converges uniformly to $f$ on compact sets of $\D$. By Fatou's Lemma, it holds that $f\in RM(p,q)$. Consider the bounded sequence $\{h_{k}\}:=\{f_{n_k}-f\}$. Clearly it converges uniformly to $0$ on compact sets of $\D$. Since $\{T h_{k}\}$ does not converge weakly to zero, there are $\lambda\in X^{\ast}$, $\delta>0$, and a subsequence $\{T h_{k_j}\}$ such that 
$$\Re\left(\lambda\left(Th_{k_j}\right)\right)\geq \delta >0.$$
	By our assumption, there exists $g_j\in \textrm{co}\{h_{k_j},h_{k_{j+1}},\dots\}$ such that $\Vert T g_{j}\Vert _{X}\rightarrow 0$. But, since 
	$$\Re \lambda (T g_j)=\sum_{l=j}^{\infty} \alpha_{l,j} \Re \lambda \left(T h_{k_{l}}\right)\geq \delta >0,$$ for certain coefficients $0\leq \alpha_{k,j}\leq 1$, with $\sum_{j=k}^{\infty} \alpha_{k,j}=1$, and, for each $k$, the set $\{j\geq k:\, \alpha_{k,j}\neq 0\}$ is finite, we obtain a contradiction because
	$$ 0<\delta\leq \Re \lambda( Tg_j )\leq |\lambda (T g_j)|\leq \|\lambda\| \, \Vert T g_j\Vert_{X}.$$

Let us prove (2). By (1), we just have to check one implication. Assume that $T_g:RM(p,q)\rightarrow RM(\tilde p,\tilde q)$ is weakly compact. 
	Let $\{f_n\}\subset B_{RM(p,q)}$ be a sequence that converges uniformly to $0$ on compact sets of $\D$. By the very definition of integration operator,  we also have that $T_{g} f_n$ converges  to $0$ uniformly on compact sets of the unit disc. By the weak compactness of $T_{g}$, there exists a subsequence $\{T_{g}f_{n_k}\}$ that converges weakly to some $h\in \mathcal{H}(\D)$. Since the convergence in the weak topology implies pointwise convergence, we have that $h=0$. 
	Therefore $\{T_{g}f_{n_k}\}$ converges  weakly to $0$. By  \cite[Corollary on p. 28]{wojtaszczyk_banach_1991}, we obtain that there exists $g_{k}\in\textrm{co}\{f_{n_{k}},f_{n_{k+1}},\dots\} \subset\textrm{co}\{f_k,f_{k+1},\dots\}$ such that $\rho_{\tilde p,\tilde q}(T_g g_k)\rightarrow 0$.
\end{proof}

\subsection{The case $q=+\infty$}Unlike what happens in other spaces of holomorphic functions (see, i.e., \cite{Arevalo-Contreras-Piazza} for mixed norm spaces, \cite{BCHMP} for weighted Banach spaces, and \cite{BCDMPS} for the Bloch space and BMOA),  Example \ref{ConjInt} shows that the compactness cannot be characterized  by the property of sending the big-O space into the little-o space. Nevertheless, this property characterizes the weak compactness in the spaces $RM(p,\infty)$ for $1<p<+\infty$. 

\begin{theorem}\label{weakcompactnessRM-p-infty}
	Let $1<p<+\infty$ and $g\in \mathcal B$. The following are equivalent:
	\begin{enumerate}
		\item[(1)] $T_{g}(RM(p,\infty))\subset RM(p,0)$. 
		\item[(2)] $T_g:RM(p,0)\rightarrow RM(p,0)$ is weakly compact.
		\item[(3)] $T_g:RM(p,\infty)\rightarrow RM(p,\infty)$ is weakly compact.
	\end{enumerate}
\end{theorem}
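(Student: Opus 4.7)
The plan is to run the cycle $(3)\Rightarrow(1)\Rightarrow(2)\Rightarrow(1)\Rightarrow(3)$ by combining two routine arguments: a dilation argument handles the ``backward'' implications $(3)\Rightarrow(1)$ and $(2)\Rightarrow(1)$, while a bidual argument resting on Theorem~\ref{DualRMpinfty} handles the ``forward'' implications $(1)\Rightarrow(2)$ and $(1)\Rightarrow(3)$. Throughout, the decisive fact is the isometric identification $(RM(p,0))^{\ast\ast}\cong RM(p,\infty)$, under which weak-$^{\ast}$ convergent bounded sequences in the bidual converge uniformly on compact subsets of $\mathbb{D}$.

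For $(3)\Rightarrow(1)$ and $(2)\Rightarrow(1)$, fix $f\in RM(p,\infty)$ and consider the dilations $f_{r_n}(z)=f(r_n z)$ with $r_n\uparrow 1$. Each $f_{r_n}$ lies in $H^{\infty}\subset RM(p,0)$, so $T_g f_{r_n}\in RM(p,0)$ by Theorem~\ref{thboundedopinteg}(2); the sequence $\{f_{r_n}\}$ is bounded in both $RM(p,\infty)$ and $RM(p,0)$ (using $\rho_{p,\infty}(f_r)\leq r^{-1/p}\rho_{p,\infty}(f)$) and converges uniformly on compacta to $f$. Weak compactness of $T_g$ on $RM(p,\infty)$ (under (3)) or on $RM(p,0)$ (under (2)) furnishes a subsequence of $\{T_g f_{r_n}\}$ converging weakly in the corresponding space, and continuity of the point evaluations (Proposition~\ref{main-properties}(1)) forces the weak limit to agree with the pointwise limit $T_g f$. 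Since $RM(p,0)$ is norm-closed and convex in $RM(p,\infty)$ it is weakly closed, so $T_g f\in RM(p,0)$, which is (1).

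For $(1)\Rightarrow(2)$ and $(1)\Rightarrow(3)$, assume $T_g(RM(p,\infty))\subset RM(p,0)$, so that $T_g\colon RM(p,\infty)\to RM(p,0)$ is bounded by the closed graph theorem. Let $\{f_n\}$ be a bounded sequence in $RM(p,\infty)$ (for~(3)) or in $RM(p,0)$ (for~(2)); Montel together with Fatou extracts a subsequence $f_{n_k}\to f$ uniformly on compacta with $f\in RM(p,\infty)$, so that $T_g f_{n_k}\to T_g f$ pointwise, $\{T_g f_{n_k}\}$ is bounded in $RM(p,0)$, and $T_g f\in RM(p,0)$ by (1). Lemma~\ref{densityBergRMP0} together with the separability of $(A^p)^{\ast}$ yields that $(RM(p,0))^{\ast}$ is separable, so the weak-$^{\ast}$ topology on bounded subsets of $(RM(p,0))^{\ast\ast}=RM(p,\infty)$ is metrizable; Banach--Alaoglu combined with the last assertion of Theorem~\ref{DualRMpinfty} produces a further subsequence converging weak-$^{\ast}$ to some $h\in RM(p,\infty)$ that must equal $T_g f$ by pointwise identification. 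Because $T_g f\in RM(p,0)$, this weak-$^{\ast}$ convergence of a sequence in $RM(p,0)$ to a predual limit coincides with weak convergence in $RM(p,0)$, and hence (via the continuous inclusion) also with weak convergence in $RM(p,\infty)$. Eberlein--\v{S}mulian then delivers the required weak compactness on the appropriate space.

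The principal subtlety is ensuring that the weak-$^{\ast}$ cluster point in the bidual actually lies in $RM(p,0)$; hypothesis (1) supplies this via $T_g f\in RM(p,0)$, after which weak-$^{\ast}$ convergence in $(RM(p,0))^{\ast\ast}$ of a sequence in $RM(p,0)$ with predual limit collapses to weak convergence in $RM(p,0)$, and from there to weak convergence in $RM(p,\infty)$. Without (1) the limit would only be located in $RM(p,\infty)$ and the argument would collapse. The complementary ingredient is the separability of $(RM(p,0))^{\ast}$, which turns Banach--Alaoglu into the sequential statement needed for the Eberlein--\v{S}mulian extraction.
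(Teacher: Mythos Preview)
Your proof is correct. The paper's own argument is considerably shorter and more abstract: it invokes Gantmacher's theorem (for a bounded $T:X\to X$, weak compactness of $T$ is equivalent to $T^{\ast\ast}(X^{\ast\ast})\subset X$ and to weak compactness of $T^{\ast\ast}$), applies it to $T_g:RM(p,0)\to RM(p,0)$, and then observes that under the isometric identification $I^{\ast\ast}:(RM(p,0))^{\ast\ast}\to RM(p,\infty)$ of Theorem~\ref{DualRMpinfty} the bidual operator $(T_g)^{\ast\ast}$ corresponds to $T_g:RM(p,\infty)\to RM(p,\infty)$ (the commutativity of the relevant square being dismissed as a ``standard argument''). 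The three equivalences then drop out immediately from the three formulations of Gantmacher.

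Your route is more hands-on: rather than citing Gantmacher and the commutative square, you reprove the needed implications sequentially, using dilations for $(2),(3)\Rightarrow(1)$ and a Montel/Banach--Alaoglu extraction for $(1)\Rightarrow(2),(3)$. Both approaches rest on Theorem~\ref{DualRMpinfty}, but you use it only for the separability of $(RM(p,0))^{\ast}$ and for the pointwise identification of weak-$^{\ast}$ limits, whereas the paper uses it to transport the whole operator. The paper's proof is cleaner once one accepts the commutativity of the diagram (which itself requires essentially the same weak-$^{\ast}$/pointwise identification you carry out explicitly); your argument has the virtue of being self-contained and making that identification visible.
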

\begin{proof} Let us recall that given a Banach space $X$ and a bounded operator $T:X\to X$ it holds that $T$ is weakly compact if and only if $T^{**}:X^{**}\to X$ (see \cite[Theorem 6, p. 52]{wojtaszczyk_banach_1991}) if and only if $T^{**}:X^{**}\to X^{**}$ is weakly compact. 

Let us consider the bounded operator $T_{g}:RM(p,0)\rightarrow RM(p,0)$. A standard argument using Theorem \ref{DualRMpinfty} gives that the next diagram is commutative:
$$
\xymatrix{  
		(RM(p,0))^{\ast\ast}\ar[r]^{(T_{g})^{\ast\ast}}\ar[d]_{I^{\ast\ast}} & (RM(p,0))^{\ast\ast}\ar[d]_{I^{\ast\ast}}  \\	
		RM(p,\infty)  \ar[r]^{T_{g}} & RM(p,\infty) \\
}
$$
Since $I^{\ast\ast}$ is an isomorphism, above general results give the theorem.
\end{proof}

A similar result to above theorem for $p=+\infty$ was obtained in \cite{CPPR}. Namely,  they proved that $T_g$ is weakly compact on $H^{\infty}$ if and only if it is weakly compact on the disc algebra and if and only if $T_{g}$ sends $H^{\infty}$ into the disc algebra. Let us recall that the disc algebra is the closure of the polynomials in $H^{\infty}$ in an analogous way to the couple $RM(p,0)$ and $RM(p,\infty)$. 

Next example shows that Theorem \ref{weakcompactnessRM-p-infty}  does not hold for $p=1$.

\begin{example}
	Let $g(z)=-\log(1-z)\in \mathcal{B}\setminus \mathcal{B}_{0}$. Then $T_g:RM(1,\infty)\rightarrow RM(1,0)$ fixes a copy of $\ell^1$. In particular, $T_g:RM(1,\infty)\rightarrow RM(1,0)$ is not weakly compact.
\end{example}
\begin{proof} By Example \ref{ConjInt}, $T_g:RM(1,\infty)\rightarrow RM(1,0)$ is a bounded operator. Take $\beta \geq 2$ a natural number and write $\delta:=\frac{2}{3}\frac{\beta}{1+\beta}$. Notice that the sequence $\{T_{g}(\beta^{n}z^{\beta^n})\}$ converges to zero uniformly on compact subsets of $\D$.  Consider the sequence of functions $f_n:[0,1)\to \C$ given by $f_{n}(r):=T_{g}(\beta^{n}z^{\beta^n})(r)$ for $r\in [0,1)$. Notice that 
\begin{align*}
	\int_{0}^{1} \left|T_{g}(\beta^{n}z^{\beta^n})(r)\right|\, dr= \left|\int_{0}^{1}\int_{0}^{r} \beta^{n}\frac{u^{\beta^n}}{1-u}\, du\, dr\right|=\left|\int_{0}^{1}\int_{u}^{1} \beta^{n}\frac{u^{\beta^n}}{1-u}\, dr\, du\right|
	=\frac{\beta^n}{1+\beta^n}\geq \frac{3}{2}\delta
	\end{align*}
	for all $n\in\N$. 
That is,  $1\geq\| f_n\|_{1}\geq 3\delta/2$ for $n\in\N$.
Since all the functions $f_n$ are integrable in $[0,1)$ and goes to zero uniformly on compacta of such interval we can choose $r_1\in [0,1)$ and $n_2$ such that 
	\begin{align*}
	\int_{r_{1}}^{1} |f_1|\ dr<\frac{\delta}{2} \quad \textrm{ and } \quad 	\int_{0}^{r_1} |f_{n_2}|\ dr <\frac{\delta}{4}.
	\end{align*}
Repeating the argument we choose $r_{2}\in (r_1,1)$ and $n_3$ such that such that 
	\begin{align*}
	\int_{r_{2}}^{1} |f_{n_2}|\ dr<\frac{\delta}{4}\quad \textrm{ and } \quad 	\int_{0}^{r_2} |f_{n_3}|\ dr <\frac{\delta}{4}.
	\end{align*}
Continuing inductively we obtain a subsequence $\{f_{n_k}\}$ and a sequence of disjoint intervals $\{I_k\}=\{(r_{k-1},r_{k})\}$, setting $r_{0}=0$, such that 
	\begin{align*}
	\int_{I_{k}} |f_{n_{k}}|\ dr >\delta \quad \textrm{ and } \quad 
	\int_{\cup_{j\neq k}I_{j}} |f_{n_{k}}|\ dr<\frac{\delta}{2}.
	\end{align*}
Now, we consider the operator $\Theta: \ell^1\rightarrow RM(1,\infty)$ given by $\Theta(\{\alpha_{k}\})=\sum_{k=1}^{\infty} \alpha_{k}\beta^{n_k} z^{\beta^{n_k}}$. By  Proposition \ref{lacunary}, 
\begin{align*}
	\rho_{1,\infty}(\Theta(\{\alpha_{k}\}))=\rho_{1,\infty}\left(\sum_{k=1}^{\infty} \alpha_{k}\beta^{n_k} z^{\beta^{n_k}}\right)\asymp \left(\sum_{k=0}^{\infty}\frac{|\alpha_k| \beta^{n_k}}{\beta^{n_k}+1}\right)\leq \|\{\alpha_{k}\}\|_{\ell^1}.
	\end{align*}
Therefore, the boundedness of $T_g$ implies that $T_g\circ\Theta:\ell^{1}\rightarrow RM(1,0)$ is continuous.	On the other hand, 
\begin{align*}
&\int_{0}^{1} \left|\sum_{k=1}^{\infty}\alpha_{k}f_{n_k}(r)\right|\ dr\geq \sum_{k=1}^{\infty}\int_{I_k} \left(
|\alpha_{k}||f_{n_k}(r)|-\sum_{j\neq k} |\alpha_{j}||f_{n_j}(r)|\right)\ dr\\
&\geq \delta \sum_{k=1}^{\infty}|\alpha_{k}|-\sum_{j=1}^{\infty}|\alpha_j|\sum_{k\neq j} \int_{I_k} |f_{n_k}(r)|\ dr\geq \delta \sum_{k=1}^{\infty}|\alpha_{k}|-\frac{\delta}{2}\sum_{k=1}^{\infty}|\alpha_{k}|=\frac{\delta}{2}\|\{\alpha_{k}\}\|_{\ell_{1}}.
\end{align*}
%	\begin{align*}
%	&\rho_{1,\infty}(T_g\circ\Theta(\{\alpha_{k}\}))\geq \int_{0}^{1} \left|T_{g}\circ \Theta(\{\alpha_{k}\})(r)\right|\ dr=\int_{0}^{1} \left|T_{g}\circ \Theta(\{\alpha_{k}\})(r) \chi_{\cup_{k}I_{k}}(r)\right|\ dr\\
%	&\qquad \geq \sum_{k=1}^{\infty} |\alpha_{k}| \int_{I_{k}} \left|T_{g}\circ \Theta(\{\alpha_{k}\})(r)\right| \ dr-\int_{0}^{1} \left|T_{g}\circ \Theta(\{\alpha_{k}\})(r) \chi_{\cup_{j\neq k}I_{j}}(r)\right|\ dr\\
%	&\qquad\geq\delta \sum_{k=1}^{\infty}\alpha_{k} -\frac{\delta}{2} \sum_{k=1}^{\infty}\alpha_{k} =\frac{\delta}{2}\|\{\alpha_k\}\|_{\ell^1}.
%	\end{align*}
	Hence, we have that $T_g:RM(1,\infty)\rightarrow RM(1,0)$ fixes a copy of $\ell^1$ and we are done.
	\end{proof}

\subsection{The case $p=1$} 
We start with  a characterization of the weak compactness of the operator $T_g: RM(1,q)\rightarrow RM(1,q)$ in terms of non-fixing copies of $\ell^{1}$. We need the following lemma which probably is well-known by specialist but  we could not find any reference so that we include the proof for the sake of completeness.

\begin{lemma}\label{lemma-copy-l1}
Let $X$ be a Banach space and $\mu$ a positive and finite measure on $\Omega$. If $T:X\to L^{1}(\mu)$ is bounded and not weakly compact, then it fixes a copy of $\ell^{1}$. 
\end{lemma}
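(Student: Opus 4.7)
The plan is to reduce this to a standard Kadec--Pelczy\'nski-type dichotomy in $L^{1}(\mu)$. Since $T$ is not weakly compact, Eberlein--\v{S}mulian yields a bounded sequence $\{x_{n}\}\subset X$ such that $\{Tx_{n}\}$ admits no weakly convergent subsequence in $L^{1}(\mu)$. Writing $f_{n}:=Tx_{n}$, we get a bounded sequence in $L^{1}(\mu)$ which is not relatively weakly compact. By the Dunford--Pettis characterization of relative weak compactness in $L^{1}(\mu)$, $\{f_{n}\}$ fails to be uniformly integrable: there exist $\varepsilon>0$ and measurable sets $B_{n}$ with $\mu(B_{n})\to 0$ and $\int_{B_{n}}|f_{n}|\,d\mu\geq\varepsilon$.

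The heart of the argument is a gliding-hump construction that promotes this failure of uniform integrability to a subsequence $\{f_{n_{k}}\}$ essentially supported on pairwise disjoint sets. I would proceed inductively: having selected $n_{1}<\dots<n_{k-1}$ and disjoint measurable sets $A_{1},\dots,A_{k-1}$ with $\int_{A_{j}}|f_{n_{j}}|\,d\mu\geq\varepsilon/2$, use the absolute continuity of $|f_{n_{j}}|\,d\mu$ with respect to $\mu$ (for $j<k$) to pick $\delta_{k}>0$ so small that any set of $\mu$-measure $<\delta_{k}$ contributes less than $\varepsilon 2^{-k-2}$ to each of the integrals $\int|f_{n_{j}}|\,d\mu$, $j<k$. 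Since $\mu(B_{n})\to 0$, we may choose $n_{k}$ with $\mu(B_{n_{k}})<\delta_{k}$, and take $A_{k}:=B_{n_{k}}\setminus(A_{1}\cup\dots\cup A_{k-1})$; the absolute continuity of $|f_{n_{k}}|\,d\mu$ together with $\mu(A_{1}\cup\dots\cup A_{k-1})\to 0$ (after a further thinning if necessary) ensures $\int_{A_{k}}|f_{n_{k}}|\,d\mu\geq\varepsilon/2$ while $\sum_{j\neq k}\int_{A_{k}}|f_{n_{j}}|\,d\mu\leq\varepsilon/4$.

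With the $\{A_{k}\}$ pairwise disjoint, for any scalars $\{a_{k}\}\in\ell^{1}$ one has
\begin{equation*}
\left\|\sum_{k}a_{k}f_{n_{k}}\right\|_{L^{1}(\mu)}\geq \sum_{k}\int_{A_{k}}\Bigl(|a_{k}||f_{n_{k}}|-\sum_{j\neq k}|a_{j}||f_{n_{j}}|\Bigr)\,d\mu\geq \frac{\varepsilon}{2}\|\{a_{k}\}\|_{\ell^{1}}-\frac{\varepsilon}{4}\|\{a_{k}\}\|_{\ell^{1}}=\frac{\varepsilon}{4}\|\{a_{k}\}\|_{\ell^{1}},
\end{equation*}
while the reverse estimate $\|\sum_{k}a_{k}f_{n_{k}}\|_{L^{1}(\mu)}\leq\sup_{k}\|f_{n_{k}}\|_{L^{1}(\mu)}\|\{a_{k}\}\|_{\ell^{1}}$ is trivial. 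Thus $\{f_{n_{k}}\}$ is equivalent to the $\ell^{1}$-basis.

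Finally, I transfer this back to $X$: since $T$ is bounded, $\|\sum_{k}a_{k}x_{n_{k}}\|\geq \|T\|^{-1}\|\sum_{k}a_{k}f_{n_{k}}\|\gtrsim\|\{a_{k}\}\|_{\ell^{1}}$, and the upper estimate follows from the boundedness of $\{x_{n_{k}}\}$. Hence $\{x_{n_{k}}\}$ is also equivalent to the $\ell^{1}$-basis, $Y:=\overline{\spann}\{x_{n_{k}}\}$ is isomorphic to $\ell^{1}$, and $T_{|Y}$ is an isomorphism onto its image, so $T$ fixes a copy of $\ell^{1}$. The main obstacle is the inductive disjointification step: one must carefully balance the choice of $\delta_{k}$ (via absolute continuity of the finitely many previously chosen $|f_{n_{j}}|\,d\mu$) against the rate at which $\mu(B_{n})\to 0$, so that the off-diagonal contributions $\sum_{j\neq k}\int_{A_{k}}|f_{n_{j}}|$ can be made summable and small relative to $\varepsilon$.
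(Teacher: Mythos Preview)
Your overall strategy coincides with the paper's: locate an $\ell^{1}$-basic sequence inside $T(B_{X})\subset L^{1}(\mu)$ and then pull it back through $T$. The paper simply quotes the Kadec--Pe\l czy\'nski dichotomy (Diestel, \emph{Sequences and Series in Banach Spaces}, p.~93): any bounded, non-relatively-weakly-compact subset of $L^{1}(\mu)$ contains a sequence equivalent to the unit vector basis of $\ell^{1}$; then the transfer to $X$ is the three-line estimate you also give. So the difference is only that you attempt to reprove the cited dichotomy by a direct gliding hump.

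That attempt, however, has a genuine gap at exactly the place you flag as ``the main obstacle''. In your inductive step you need $\int_{A_{1}\cup\dots\cup A_{k-1}}|f_{n_{k}}|\,d\mu\leq \varepsilon/2$ in order to get $\int_{A_{k}}|f_{n_{k}}|\geq\varepsilon/2$. You justify this by ``absolute continuity of $|f_{n_{k}}|\,d\mu$ together with $\mu(A_{1}\cup\dots\cup A_{k-1})\to 0$'', but the latter is false: $A_{1}\cup\dots\cup A_{k-1}$ is an increasing union, and its measure is already fixed before $n_{k}$ is chosen, so no ``further thinning'' helps. Concretely, take $\mu$ Lebesgue on $[0,1]$ and $f_{n}=n\chi_{[0,1/n]}$; then $\|f_{n}\|_{1}=1$, $B_{n}=[0,1/n]$, $\varepsilon=1$, and whatever $n_{1}$ you pick, $A_{1}=[0,1/n_{1}]$ and $\int_{A_{1}}|f_{n}|=1$ for \emph{every} $n>n_{1}$, so $A_{2}=B_{n_{2}}\setminus A_{1}=\emptyset$ and the construction collapses. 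The clean way around this (and essentially what the cited corollary in Diestel encodes) is to observe that $L^{1}(\mu)$ is weakly sequentially complete, so a bounded sequence with no weakly convergent subsequence has no weakly Cauchy subsequence either, and then Rosenthal's $\ell^{1}$-theorem produces the desired $\ell^{1}$-basic subsequence directly.
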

\begin{proof}
Since $T(B_{X})$ is not relatively weakly compact, by \cite[p. 93, Corollary]{Diestel}, there exists a sequence $\{f_{n}\}$  in $T(B_{X})$ which is equivalent to the basis of $\ell^{1}$. That is, there is a positive constant $\delta$ such that
$$
\Vert \sum_{n}\alpha_{n} f_{n}\Vert \geq \delta \sum_{n}|\alpha_{n} |
$$
for all sequences $\{\alpha_{n}\}$ of complex  numbers.
Take $x_{n}\in B_{X}$ such that $T(x_{n})=f_{n}$. Then
$$
 \sum_{n}|\alpha_{n} |\geq \Vert \sum_{n}\alpha_{n} x_{n}\Vert \geq  \frac{1}{||T||}\Vert \sum_{n}\alpha_{n} f_{n}\Vert \geq \frac{\delta}{||T||} \sum_{n}|\alpha_{n} |
$$
for all sequences $\{\alpha_{n}\}$ of complex  numbers and we are done.
\end{proof}

\begin{proposition}\label{Prop: weak-compact-1-q}
	Let $1<q<+\infty$ and $g\in\mathcal{B}$. The following assertions are equivalent:
	\begin{enumerate}
		\item $T_g: RM(1,q)\rightarrow RM(1,q)$ is weakly compact.
		\item $T_g: RM(1,q)\rightarrow RM(1,1)$ is compact.
		\item  $T_g: RM(1,q)\rightarrow RM(1,1)$ is weakly compact.
		\item $T_g: RM(1,q)\rightarrow RM(1,1)$ does not fix a copy of $\ell^1$.
		\item $T_g: RM(1,q)\rightarrow RM(1,q)$ does not fix a copy of $\ell^1$.
	\end{enumerate}
\end{proposition}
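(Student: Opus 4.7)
The plan is to close a cycle of five implications, identifying the single genuinely hard step and organising the rest around it. Since weak compactness of $T_g$ on $RM(1,q)$ clearly implies each of (3)-(5), the non-trivial content lies in (2) $\Rightarrow$ (1); I will argue the other implications by classical Banach-space techniques and use Lemma \ref{lemmaweakcompact1q}(2) combined with the Littlewood-Paley equivalences and a Fefferman-Stein vector-valued maximal inequality to establish the hard direction.

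For the easy part I would first record the continuous inclusion $RM(1,q)\hookrightarrow RM(1,1)=A^{1}$, which follows from H\"older on $\T$ since $q>1$ and implies $(1)\Rightarrow(3)$. Because $A^{1}$ is isomorphic to $\ell^{1}$ (hence has the Schur property), relatively weakly compact sets in $A^{1}$ are relatively norm-compact, giving $(3)\Rightarrow(2)$; the reverse is trivial. The equivalence $(3)\Leftrightarrow(4)$ follows from two standard facts: a weakly compact operator cannot fix $\ell^{1}$, while Lemma \ref{lemma-copy-l1} applied to the embedding $A^{1}\hookrightarrow L^{1}(dA)$ gives the converse (the $\ell^{1}$-copy produced lives inside $A^{1}$ since the $L^{1}$ and $A^{1}$ norms coincide there). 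The implication $(1)\Rightarrow(5)$ is again the reflexivity obstruction, while $(5)\Rightarrow(4)$ is immediate from $\rho_{1,1}\leq\rho_{1,q}$: any subspace $X_{0}\simeq\ell^{1}$ on which $T_{g}$ is an isomorphism into $RM(1,1)$ is automatically so into $RM(1,q)$, and the contrapositive gives the claim.

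For $(2)\Rightarrow(1)$ I would invoke Lemma \ref{lemmaweakcompact1q}(2): given $\{f_{n}\}\subset B_{RM(1,q)}$ with $f_{n}\to 0$ uniformly on compacta of $\D$, produce convex combinations $g_{k}\in\operatorname{co}\{f_{k},f_{k+1},\dots\}$ with $\rho_{1,q}(T_{g}g_{k})\to 0$. Combining Proposition \ref{bergtypine} with the converse Littlewood-Paley inequality of Proposition \ref{converse-little-paley} (applicable for $(1,q)$ and also for $(1,1)$) yields
\[
\rho_{1,q}(T_{g}f)\asymp\rho_{1,q}\bigl(f(z)g'(z)(1-|z|)\bigr),\qquad \rho_{1,1}(T_{g}f)\asymp\rho_{1,1}\bigl(f(z)g'(z)(1-|z|)\bigr),
\]
so the question reduces to the auxiliary bounded operator $\tilde{T}_{g}f(z):=f(z)g'(z)(1-|z|)$ from $RM(1,q)$ into the vector-valued space $L^{q}(\T,L^{1}[0,1])$. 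Hypothesis (2) plus the $(1,1)$ equivalence forces $\rho_{1,1}(\tilde{T}_{g}f_{n})\to 0$, i.e.\ $\tilde{T}_{g}f_{n}\to 0$ in $L^{1}([0,1]\times\T)$; pointwise convergence to $0$ is automatic from uniform convergence on compacta. The plan is to invoke the Fefferman-Stein vector-valued maximal theorem for sequences of $\ell^{1}$-valued functions in $L^{q}(\T)$ to show that $\{\tilde{T}_{g}f_{n}\}$ is relatively weakly compact in $L^{q}(\T,L^{1}[0,1])$, so that its unique pointwise (and $L^{1}$) limit $0$ is actually the weak limit; Mazur's lemma then yields convex combinations converging to $0$ in $L^{q}(\T,L^{1}[0,1])$, which by the equivalence above gives $\rho_{1,q}(T_{g}g_{k})\to 0$.

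The main obstacle is the last step: passing from pointwise and $L^{1}([0,1]\times\T)$-convergence to $L^{q}(\T,L^{1}[0,1])$-weak convergence (up to convex combinations) inside a non-reflexive $L^{1}$-valued Bochner-type space. This is precisely where the Fefferman-Stein maximal inequality enters, via a characterization of weak compactness of operators into $L^{1}([0,1]\times\T)$ in terms of a uniform integrability condition controlled by the vector-valued Hardy-Littlewood maximal function. Identifying the correct sublinear operator that majorises $\tilde{T}_{g}f$ so that Fefferman-Stein applies, and verifying that its boundedness transfers the $L^{1}$-smallness of $\tilde{T}_{g}f_{n}$ into uniform integrability of $\{\tilde{T}_{g}f_{n}\}^{q}$ in $L^{1}(\T)$ in the appropriate sense, is the technical core of the proof.
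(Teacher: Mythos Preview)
Your treatment of the easy implications $(1)\Rightarrow(5)\Rightarrow(4)\Rightarrow(3)\Rightarrow(2)$ is correct and essentially matches the paper's. The difficulty is indeed concentrated in $(2)\Rightarrow(1)$, but you have overcomplicated this step and invoked machinery that plays no role here.

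The paper's argument for $(2)\Rightarrow(1)$ is elementary and avoids both the Littlewood--Paley equivalences and the vector-valued Bochner space $L^{q}(\T,L^{1}[0,1])$ altogether. Given $\{f_{n}\}\subset B_{RM(1,q)}$ converging to $0$ uniformly on compacta, hypothesis (2) gives $\rho_{1,1}(T_{g}f_{n})\to 0$. Set
\[
H_{n}(\theta):=\int_{0}^{1}\bigl|T_{g}f_{n}(re^{i\theta})\bigr|\,dr.
\]
Then $\|H_{n}\|_{L^{q}(\T)}=\rho_{1,q}(T_{g}f_{n})$ is bounded (since $g\in\mathcal{B}$) while $\|H_{n}\|_{L^{1}(\T)}=\rho_{1,1}(T_{g}f_{n})\to 0$. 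Because $L^{q}(\T)$ is \emph{reflexive} for $1<q<\infty$, a subsequence $\{H_{n_{k}}\}$ converges weakly in $L^{q}(\T)$, necessarily to $0$. Mazur then yields convex combinations $F_{k}=\sum_{j\geq k}\alpha_{k,j}H_{n_{j}}$ with $\|F_{k}\|_{L^{q}(\T)}\to 0$; with $g_{k}:=\sum_{j\geq k}\alpha_{k,j}f_{n_{j}}$ one has $\int_{0}^{1}|T_{g}g_{k}(re^{i\theta})|\,dr\leq F_{k}(\theta)$ by the triangle inequality, hence $\rho_{1,q}(T_{g}g_{k})\leq\|F_{k}\|_{L^{q}(\T)}\to 0$, and Lemma~\ref{lemmaweakcompact1q} finishes.

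The point you missed is that taking the $L^{1}[0,1]$-norm in the radial variable collapses the problem to the \emph{scalar} reflexive space $L^{q}(\T)$; there is no need to establish weak compactness inside the non-reflexive space $L^{q}(\T,L^{1}[0,1])$, and Fefferman--Stein is irrelevant to this proposition. (The Fefferman--Stein inequality does appear in the paper, but in Theorem~\ref{technical result}, where it bounds $\rho_{1,q}$ of a specific test function in the proof that $g\notin\mathcal{B}_{0,w}$ forces $R_{g}$ to be non-weakly-compact---a different statement.) Your proposed ``technical core'' is therefore a phantom obstacle: as written, the plan is too vague to constitute an argument, since you never explain how a maximal-function bound would produce relative weak compactness in a Bochner space, and in fact no such step is needed.
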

\begin{proof} It is obvious that (1) implies (5).   
Bearing in mind  the following commutative diagram 
	$$
	\xymatrix{
		RM(1,q) \ar[r]^{T_g} \ar@/_2pc/[rr]_{T_g} & RM(1,q) \ar@{^{(}->}[r] & RM(1,1)
	}$$
	it is clear that (5) implies (4). Notice that $RM(1,1)=A^1$ and it is isomorphic to $\ell^{1}$ \cite[Theorem 11, p. 89]{wojtaszczyk_banach_1991}. By Lemma \ref{lemma-copy-l1}, if $T_g: RM(1,q)\rightarrow RM(1,1)$ does not fix a copy of $\ell^1$, it is weakly compact. In addition, since $RM(1,1)$ is isomorphic to $\ell^1$, it has the Schur property and it must be compact. Thus, 
(4) implies (3) and (3) implies (2). Therefore, it remains to show that (2) implies (1). 
	
Assume that $T_g: RM(1,q)\rightarrow RM(1,1)$ is compact. Let $\{f_{n}\}\subset B_{RM(1,q)}$ be a sequence that converges uniformly to $0$ on compact sets of $\D$. Then, the compactness implies that $\rho_{1,1}(T_{g}f_n)\rightarrow 0$. 
	
The value $H_n(\theta):=\int_{0}^{1} |T_g f_{n} (re^{i\theta})|\, dr$ is finite for almost every $\theta$. 
Since $g\in \mathcal{B}$, there is a constant $C>0$ such that $\|H_n\|_{L^q(\mathbb{T})} =\rho_{1,q}(T_{g}(f))\leq C$. Moreover, $\lim_{n}\|H_n\|_{L^1(\mathbb{T})}=0$. Therefore, we obtain a subsequence $\{H_{n_k}\}$ such that $H_{n_k}\rightarrow 0$ weakly in $L^q(\mathbb{T})$.
	Hence, there is $F_{k}\in \textrm{co}\{H_{n_k},H_{n_{k+1}},\dots\}$ such that $\|F_k\|_{L^{q}(\mathbb{T})}\rightarrow 0$ (see \cite[Corollary on p. 28]{wojtaszczyk_banach_1991}).
Write
	\begin{align*}
	F_{k}=\sum_{j=k}^{\infty} \alpha_{k,j} H_{n_j},
	\end{align*}
	where $\alpha_{k,j}\geq 0$, $\sum_{j=k}^{\infty} \alpha_{k,j}=1$, and, for each $k$, the set $\{j\geq k:\, \alpha_{k,j}\neq 0\}$ is finite. The functions
	\begin{align*}
	g_{k}:=\sum_{j=k}^{\infty} \alpha_{k,j} f_{n_j},
	\end{align*}
belong to $RM(1,q)$ and  
	\begin{align*}
	\int_{0}^{1} |T_g g_k(re^{i\theta})|\ dr\leq \sum_{j=k}^{\infty} \alpha_{k,j} H_{n_j}=F_{k}(\theta).
	\end{align*}
It follows that $\rho_{1,q}(T_g g_k)\rightarrow 0$, as $k\rightarrow\infty$.
	Using Lemma~\ref{lemmaweakcompact1q} we conclude that $T_g:RM(1,q)\rightarrow RM(1,q)$ is weakly compact. 
\end{proof}

The main result of this section provides a characterization of the weak compactness of the operator $T_{g}: RM(1,q)\rightarrow RM(1,q)$ in terms of the symbol $g$. 
For this purpose, we introduce a pointwise version of the little Bloch space.
\begin{definition}\label{def:weaklylittleBloch}
The weakly little Bloch space, denoted by $\mathcal{B}_{0,w}$, is the closed subspace of $\mathcal{B}$ consisting of analytic functions $f\in \mathcal{B}$ with 
	\begin{align*}
	\lim_{r\to 1}(1-r^2)|f'(re^{i\theta})|=0,
	\end{align*}
	for almost every $e^{i\theta}\in \T$. 
\end{definition}

As  far as we know, this space appeared firstly in \cite{Pavlovic}. We are going to prove that $T_g\colon RM(1,q)\to RM(1,q)$ is weakly compact if and only $g\in \mathcal{B}_{0,w}$.
Next theorem provides one of the implications. A preliminary lemma is needed.

\begin{lemma}\label{Bcdelta}
Given $B$, $c>0$ there exists $\delta\in (0,1/2)$ such that for $g\in\mathcal{B}$ satisfying 
$|g'(z)|(1-|z|)\le B,$ for all $z\in \D$,
$\eta\in (0,1/2)$ and $e^{ia}\in\T$ satisfying
\begin{equation}\label{grande2c}
|g'((1-\eta)e^{ia})| \eta>2c,
\end{equation}
we have 
$$
|g'(re^{i\theta})|>\frac{c}{\eta},
$$
whenever $|r-(1-\eta)|<\delta\eta$ and $|\theta-a|<\delta\eta$.
\end{lemma}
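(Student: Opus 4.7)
The approach is a standard Bloch-style derivative estimate. Let $z_0 := (1-\eta)e^{ia}$, so by assumption $|g'(z_0)| > 2c/\eta$. I want to show that $g'$ cannot drop by more than $c/\eta$ on the small rectangle described by $|r-(1-\eta)|<\delta\eta$ and $|\theta - a|<\delta\eta$, provided $\delta$ is chosen small enough depending only on $B$ and $c$.

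First I would control $g''$ by the Bloch bound on $g'$. Using Cauchy's integral formula for the derivative on the disc centred at $z$ of radius $(1-|z|)/2$, together with the hypothesis $|g'(w)|(1-|w|)\leq B$ for all $w\in\D$, one obtains the standard estimate
\begin{equation*}
|g''(z)|\leq \frac{4B}{(1-|z|)^{2}}, \qquad z\in\D.
\end{equation*}

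Next I would estimate the geometry near $z_0$. If $z = re^{i\theta}$ with $|r-(1-\eta)|<\delta\eta$ and $|\theta-a|<\delta\eta$ (and $\delta\leq 1/2$), then
\begin{equation*}
|z-z_0|\leq |r-(1-\eta)|+(1-\eta)|e^{i\theta}-e^{ia}|\leq \delta\eta+(1-\eta)\delta\eta\leq 2\delta\eta,
\end{equation*}
and $1-|z|\geq \eta(1-\delta)\geq \eta/2$. In particular, the straight segment from $z_0$ to $z$ stays inside $\D$ at distance at least $\eta/2$ from $\T$, so the bound on $g''$ applies on that segment.

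Integrating $g''$ along this segment then gives
\begin{equation*}
|g'(z)-g'(z_0)|\leq |z-z_0|\cdot \sup_{w\in[z_0,z]}|g''(w)|\leq 2\delta\eta \cdot \frac{4B}{(\eta/2)^{2}}=\frac{32B\delta}{\eta}.
\end{equation*}
Choosing $\delta:=\min\{1/2,\, c/(32B)\}$ ensures $|g'(z)-g'(z_0)|\leq c/\eta$, whence
\begin{equation*}
|g'(re^{i\theta})|\geq |g'(z_0)|-\frac{c}{\eta}>\frac{2c}{\eta}-\frac{c}{\eta}=\frac{c}{\eta},
\end{equation*}
which proves the lemma. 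There is no real obstacle here: the only delicate point is keeping track of the two smallness conditions on $\delta$ (namely $\delta\leq 1/2$, so that $1-|z|\asymp \eta$ and the Cauchy estimate for $g''$ is applicable on the segment, and $\delta\leq c/(32B)$, to absorb the oscillation).
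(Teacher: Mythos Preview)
Your proposal is correct and follows essentially the same route as the paper: both use the Cauchy estimate $|g''(z)|\le 4B/(1-|z|)^2$, bound $|z-z_0|\le 2\delta\eta$ and $1-|w|\ge (1-\delta)\eta$ along the segment, and then integrate to obtain $|g'(z)-g'(z_0)|\le 32B\delta/\eta$, choosing $\delta\le c/(32B)$. The only differences are cosmetic (you simplify $(1-\delta)\eta\ge\eta/2$ before applying the $g''$ bound, whereas the paper carries $(1-\delta)^2$ one step further), and your explicit $\delta=\min\{1/2,\,c/(32B)\}$ should strictly be taken a bit smaller to lie in the open interval $(0,1/2)$, but this is immaterial.
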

\begin{proof}
It is not difflcult to show
(see for instance the proof of \cite[Theorem 5.5]{duren_theory_2000}) 
that if $g\in \mathcal{B}$ satisfies \eqref{BlochB} for $B>0$, then it also satisfies
\begin{equation}\label{segundader}
|g''(z)|\le \frac{4B}{(1-|z|)^2}, \qquad\text{for all $z\in \D$.}
\end{equation}
Assume now that $\eta\in (0,1/2)$ and $e^{ia}\in\T$ satisfy \eqref{grande2c}, and pick any 
$\delta\in (0,1/2)$. If 
$|r-(1-\eta)|<\delta\eta$ and $|\theta-a|<\delta\eta$, then 
$|re^{i\theta}-(1-\eta)e^{ia}|\le 2\eta\delta$
and every point $w$ in the segment joining $re^{i\theta}$ and 
$(1-\eta)e^{ia}$ has module $|w|\le (1-\eta)+\delta\eta$.
Hence, for all these $w$, we have
$$
|g''(w)|\le \frac{4B}{(1-\delta)^2\eta^2}\,
$$ 
and, by the mean value inequality,
$$
|g'((1-\eta)e^{ia})-g'(re^{i\eta})|\le \frac{8B\eta\delta}{(1-\delta)^2\eta^2}\le \frac{32 B \delta}{\eta}
\le\frac{c}{\eta},
$$
if $\delta\le c/32 B$. The lemma follows since, by \eqref{grande2c},
$$
|g'(re^{i\theta})|\ge |g'((1-\eta)e^{ia})|-\frac{c}{\eta} > \frac{2c - c}{\eta}=\frac{c}{\eta}.
$$
\end{proof}

\begin{theorem}\label{technical result}
Let $1<q<+\infty$ and $g\in\mathcal{B}\smallsetminus \mathcal{B}_{0,w}$. Then the operator
$$
R_{g}:RM(1,q)\to L^{1}([0,1)\times \T)
$$
defined by 
$$
R_g(f)(r,e^{i\theta}):=f(re^{i\theta})g'(re^{i\theta} )(1-r),\qquad r\in [0,1),\quad e^{i\theta}\in\T,
$$
is not weakly compact.
\end{theorem}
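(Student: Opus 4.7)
My strategy is to show that $R_g$ fails the Dunford--Pettis characterization of weak compactness in $L^1$: I will exhibit a sequence $\{f_N\}$ in $B_{RM(1,q)}$ together with measurable sets $A_N\subset[0,1)\times\T$ with $m_2(A_N)\to 0$ and $\int_{A_N}|R_g(f_N)|\,dm_2\geq c_0>0$, which prevents $\{R_g(f_N)\}$ from being uniformly integrable. First I would unpack the hypothesis $g\notin\mathcal{B}_{0,w}$: there exist $c>0$ and a measurable $E\subset\T$ of positive measure such that, for each $\theta\in E$, there is $\eta_n(\theta)\to 0$ with $|g'((1-\eta_n(\theta))e^{i\theta})|\eta_n(\theta)>2c$. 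Lemma~\ref{Bcdelta} with $B=\|g\|_\mathcal{B}$ then supplies a universal $\delta\in(0,1/2)$ such that, on each box
\[
Q(\theta,\eta):=\{re^{i\phi}:|r-(1-\eta)|<\delta\eta,\ |\phi-\theta|<\delta\eta\},
\]
one has $|g'(re^{i\phi})|>c/\eta$ and $1-r\asymp \eta$, hence $|g'(z)|(1-|z|)>c(1-\delta)$ throughout $Q(\theta,\eta)$.

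For each $N\in\N$, a Vitali covering argument applied to the arcs $I(\theta)=(\theta-\delta\eta(\theta),\theta+\delta\eta(\theta))$ (centered at $\theta\in E$, with $\eta(\theta)<1/N$ chosen as above) yields a disjoint sub-family $\{I_{N,k}\}_k$ whose quintuples still cover $E$, so that $\sum_k\eta_{N,k}\geq |E|/(10\delta)=:M>0$. Setting $\zeta_{N,k}:=(1-\eta_{N,k})e^{i\theta_{N,k}}$ and $Q_{N,k}:=Q(\theta_{N,k},\eta_{N,k})$, the boxes are pairwise disjoint, contained in the annulus $\{|z|>1-1/N\}$, and their union $A_N:=\bigcup_k Q_{N,k}$ satisfies $m_2(A_N)\lesssim\max_k\eta_{N,k}\cdot\sum_k\eta_{N,k}\leq C/N\to 0$.

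The test functions are to be built as an analytic superposition of reproducing-kernel-type atoms,
\[
f_N(z)=\sum_k\epsilon_{N,k}\,\frac{(1-|\zeta_{N,k}|^2)^\gamma}{(1-\overline{\zeta_{N,k}}\,z)^{1+\gamma}},
\]
for a carefully chosen exponent $\gamma=\gamma(q)>0$ and signs $\epsilon_{N,k}\in\{\pm 1\}$ selected (probabilistically) to tame the mutual interactions among the atoms. Using the standard kernel estimate $\int_0^1|1-\overline{\zeta_{N,k}}\,re^{i\phi}|^{-(1+\gamma)}\,dr\asymp|1-\overline{\zeta_{N,k}}\,e^{i\phi}|^{-\gamma}\asymp(\mathcal{M}\chi_{I_{N,k}}(\phi))^\gamma$ (up to constants depending on $\delta$), the radial $L^1$-profile of $|f_N|$ is dominated by a combination of powers of Hardy--Littlewood maximal functions of the disjointly supported indicators $\chi_{I_{N,k}}$. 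The Fefferman--Stein vector-valued maximal inequality---the classical tool flagged in the introduction---then yields the uniform bound $\rho_{1,q}(f_N)\leq C$ independent of $N$. On each $Q_{N,k}$ the $k$-th atom has modulus $\asymp\eta_{N,k}^{-1}$, and with the good sign choice (or a sufficient thinning of the Vitali family), the lower bound $|f_N(z)|\gtrsim\eta_{N,k}^{-1}$ persists on $Q_{N,k}$. Consequently,
\[
\int_{A_N}|R_g(f_N)|\,dm_2\gtrsim c(1-\delta)\sum_k\eta_{N,k}^{-1}\cdot m_2(Q_{N,k})\asymp c\sum_k\eta_{N,k}\geq cM>0,
\]
while $m_2(A_N)\to 0$, completing the argument.

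The principal technical obstacle is the simultaneous control of (i) the uniform $RM(1,q)$-bound for $f_N$ via Fefferman--Stein, which dictates the precise exponent $\gamma=\gamma(q)$, and (ii) the pointwise lower bound $|f_N(z)|\gtrsim\eta_{N,k}^{-1}$ on each $Q_{N,k}$, which requires the cross-contributions of atoms $k'\neq k$ to be negligible. The delicate calibration of these estimates, together with the randomization of signs (or a thinning of the covering family to guarantee good angular separation), is the core work of the proof.
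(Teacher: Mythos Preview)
Your outline follows the same architecture as the paper's proof (Dunford--Pettis, a covering lemma applied to the ``bad'' set $E\subset\T$, a superposition of kernel atoms as test function, Fefferman--Stein for the $\rho_{1,q}$ upper bound), so you are on the right track. A couple of points, however, need correction before the argument goes through.

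First, the random signs $\epsilon_{N,k}\in\{\pm1\}$ cannot help with the pointwise lower bound $|f_N(z)|\gtrsim\eta_{N,k}^{-1}$ on $Q_{N,k}$: you need the cross-contributions $\sum_{j\neq k}|u_j(z)|$ to be small in absolute value, and no sign choice controls that. So the ``thinning'' alternative is not optional but mandatory. The paper implements it cleanly by running the covering lemma not at scale $\delta\eta(\theta)$ (as you do) but at a much larger scale $M\eta(\theta)$, where $M$ is a free parameter to be chosen large at the end. The disjoint covering arcs $J_k$ then have width $2M\varepsilon_k$, while the concentration boxes $D_k$ sit inside the much smaller arcs $L_k$ of width $2\delta\varepsilon_k$; since the $J_k$ are pairwise disjoint, each $L_k$ lies at angular distance at least $M\varepsilon_j$ from every other center $a_j$, and a direct estimate $\int_{\T\setminus J_j}\varphi_j\lesssim \varepsilon_j/M$ makes the total cross-contribution $\lesssim B/M\cdot\sum\varepsilon_k$, which is absorbed for $M$ large. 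Your Vitali step at scale $\delta\eta$ produces adjacent boxes that may nearly touch, so the separation is lost before you start.

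Second, the exponent $\gamma$ need not depend on $q$. The paper takes exactly $\gamma=2$, i.e.\ atoms $u_k(z)=\varepsilon_k^2\big/(z-(1+\varepsilon_k)e^{ia_k})^3$, and the Fefferman--Stein step is applied with exponent $2q$ to the functions $(\mathcal{M}\chi_{I_k^\pm})^2$; this works for every $1<q<\infty$. So replace ``carefully chosen $\gamma(q)$'' by the concrete cubic kernel, and replace the probabilistic sign choice by the two-scale covering $M\gg\delta$; the rest of your sketch then matches the paper's proof.
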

\begin{proof}
We will denote by $m$ both the Lebesgue measure on $[0,1)$ and the arc length mea\-sure on $  \T$,
and by $m_2$ the product  measure $m_2=m\otimes m$ on $[0,1)\times \T$. 
In order to prove that $R_g$ is not weakly compact, and using the Dunford-Pettis Theorem (see, e.g. \cite[Theorem 15, p. 76]{Diestel-Uhl} or \cite[p. 137]{wojtaszczyk_banach_1991}), 
we need to show that the image by $R_g$ of the unit ball of $RM(1,q)$ is not uniformly integrable. This will be done if we show the existence of two constants $C$, $\a>0$ such that,
for every $\varepsilon>0$, there exits $f\in RM(1,q)$ and a measurable set $D\subset [0,1)\times \T$
such that
\begin{equation}\label{noequiint}
\text{(a) }\ m_2(D) <\varepsilon,\qquad
\text{(b) }\ \rho_{1,q}(f)\le C, \qquad \text{and} \qquad
\text{(c) }\ \int_D |R_g(f)|\, dm_2 > \a.
\end{equation}

The condition $g\in\mathcal{B}\smallsetminus \mathcal{B}_{0,w}$ yields the existence of two constants 
$B$, $c>0$ and a measurable set $A\subset \T$ of positive measure such that
\begin{equation}\label{BlochB}
|g'(z)|(1-|z|)\le B, \qquad\text{for all $z\in \D$,}
\end{equation}
and
\begin{equation}\label{NoBloch0w}
\limsup_{r\to 1^-} |g'(re^{i\theta})|(1-r) > 2c, \qquad \text{for every $e^{i\theta}\in A$.}
\end{equation}
We assume that $m(A)>\beta>0$. Finally take $M>1$ big enough (to be determined later).

In order to get the conditions in \eqref{noequiint}, fix $\varepsilon\in(0,1)$. 
For every $e^{ia}\in A$, there exists
$\varepsilon_a\in (0,\varepsilon/4\pi)$ such that 
\begin{equation}\label{eaEnA}
|g'((1-\varepsilon_a)e^{ia})|\varepsilon_a>2c.
\end{equation}
Recall that $M>1$ is big enough and consider, for every $e^{ia}\in A$,
the open arc
$$
J_a:=\{ e^{it} : t\in (a-M\varepsilon_a,a+M\varepsilon_a)\}.
$$
The family of all these arcs is a covering of $A$. So passing first
through a compact set $K\subset A$ with $m(K)>\beta$ in order to get a finite covering and then
using
Hardy-Littlewood covering lemma (see for instance \cite[Lemma 7.3]{rudin_real_1987}) there exist
$N\in\N$ and $a_1$, $a_2$, \dots, $a_N$ such that $\{ J_{a_k} : k=1,2,\ldots,N \}$
is a family of pairwise disjoint arcs with
\begin{equation}\label{covering}
\sum_{k=1}^N m(J_{a_k}) > \beta/3.
\end{equation}

We will put $\varepsilon_k$ and $J_k$ instead of $\varepsilon_{a_k}$ and $J_{a_k}$ respectively.
From \eqref{covering} we get 
\begin{equation}\label{sumaes}
\sum_{k=1}^N \varepsilon_k >\frac{\beta}{6M}.
\end{equation}
We will also consider the arcs 
$$
L_k:=\{ e^{it} : t\in [a_k-\delta\varepsilon_k,a_k+\delta\varepsilon_k]\},
$$
where $\delta$ is the one in Lemma \ref{Bcdelta}, and the subsets of $[0,1)\times \T$,
$$
D_k:=[1-\varepsilon_k-\delta\varepsilon_k,1-\varepsilon_k+\delta\varepsilon_k]
\times L_k,\qquad D=\bigcup_{k=1}^N D_k.
$$
Observe that $D$ is a compact subset of  $(1-\varepsilon/2\pi,1)\times \T$ and therefore
$$
m_2(D)< \frac{\varepsilon}{2\pi} 2\pi=\varepsilon.
$$
This yields \eqref{noequiint}(a).

Let us define the function $f$. Consider, for $z\in\D$,
\begin{equation}\label{definouf}
u_k(z)=\frac{\varepsilon_k^2}{\bigl(z-(1+\varepsilon_k)e^{ia_k}\bigr)^3},\qquad\text{and}\qquad
f(z)=\sum_{k=1}^N u_k(z).
\end{equation}

For every $e^{i\theta}\in \T$ and $1\le k\le N$, define
$$
\varphi_k(e^{i\theta})=\int_0^1 |u_k(re^{i\theta})|\, dr.
$$
We will use the following estimate about $\varphi_m$ to be proved later.

%\begin{lemma}\label{estimafi}
\noindent {\bf Claim.} Let $\theta\in\R$ such that $|\theta-a_k|\le \pi$. Then
$$
\varphi_k(e^{i\theta})\le \min\Bigl\{ 1, \frac{8\varepsilon_k^2}{|\theta-a_k|^2}\Bigr\}.%\le 8[\mathcal{M}g_k(\theta)]^2
$$
We prove this claim at the end of the proof. 

Observe that, if $(r,e^{i\theta})\in D_k$, then
$$
|re^{i\theta}-(1+\varepsilon_k)e^{ia_k}|\le r|e^{i\theta}-e^{ia_k}| + (1+\varepsilon_k)-r\le
r\delta\varepsilon_k + (1+\varepsilon_k)-(1-\varepsilon_k-\delta\varepsilon_k) 
\le 2(1+\delta)\varepsilon_k < 3\varepsilon_k,
$$
consequently
$$
|u_k(re^{i\theta})|\ge \frac{\varepsilon_k^2}{(3\varepsilon_k)^3}=\frac{1}{27 \varepsilon_k},
$$
and, by Lemma \ref{Bcdelta} with $\varepsilon_k$ in the place of $\eta$,
$$
|u_k(re^{i\theta})||g'(re^{i\theta})|(1-r)\ge \frac{1}{27 \varepsilon_k}\frac{c}{\varepsilon_k}(1-\delta)\varepsilon_k
\ge \frac{c(1-\delta)}{27 \varepsilon_k},
$$
and
\begin{equation}\label{ukgrande}
\int_{D_k} |R_g u_k|\, dm_2\ge m_2(D_k) \frac{c(1-\delta)}{27 \varepsilon_k}=
\frac{(2\delta\varepsilon_k)^2c(1-\delta)}{27 \varepsilon_k}\ge\frac{c \delta^2 \varepsilon_k}{14}.
\end{equation}

Therefore, for every $k$, we have, since $|R_g h(r,e^{i\theta})|\le B|h(re^{i\theta})|$, 
$$
\int_{D_k} |R_g f|\, dm_2 \ge \int_{D_k} |R_g u_k|-\sum_{j\ne k} \int_{D_k} |R_gu_j|\, dm_2
\ge \frac{c\delta^2}{14} \varepsilon_k - B \sum_{j\ne k}\int_{L_k} \varphi_j(e^{i\theta})\, dm(e^{i\theta}).
$$
As $L_k\subset J_k$ and the $J_k$'s are pairwise disjoint, so are the $L_k$'s and the $D_k$'s.
Hence, 
adding up these inequalities from $k=1$ to $k=N$, using \eqref{covering}
and taking into account that $L_k\subset \T\setminus J_j$, for $k\ne j$, we get
\begin{align*}
 \int_D |R_g f|\, dm_2 =& \sum_{k=1}^N \int_{D_k} |R_g f|\, dm_2\ge  
 \frac{c\delta^2}{14}  \sum_{k=1}^N \varepsilon_k - 
B \sum_{j=1}^N \sum_{k=1, k\ne j}^N\int_{L_k} \varphi_j(e^{i\theta})\, dm(e^{i\theta}) \\
 \ge&  \frac{c\delta^2}{14}\sum_{k=1}^N \varepsilon_k- 
B \sum_{j=1}^N \int_{\T\setminus J_j} \varphi_j(e^{i\theta})\, dm(e^{i\theta}).
\end{align*}
By the Claim, we have
$$
\int_{\T\setminus J_j}\varphi_j(e^{i\theta})\, dm(e^{i\theta}) = 
\int_{M\varepsilon_j<|\theta-a_j|<\pi} \varphi_j(e^{i\theta})\,d\theta\le
2\int_{M\varepsilon_j}^\pi \frac{8\varepsilon_j^2}{t^2}\, dt\le 
16\varepsilon_j^2\int_{M\varepsilon_j}^{+\infty} \frac{dt}{t^2}
=\frac{16\varepsilon_j^2}{M\varepsilon_j}.
$$
Putting together the last two estimates, if $M>\frac{2\times16\times 14}{c\delta^2} \, B$, we obtain
$$
 \int_D |R_g f|\, dm_2 \ge \Bigl(\frac{c\delta^2}{14}-\frac{16 B}{M}\Bigr)\sum_{k=1}^N\varepsilon_k
 \ge \frac{c\delta^2}{28}\sum_{k=1}^N \varepsilon_k,
$$
and, by \eqref{sumaes},
$$
\int_D |R_g f|\, dm_2\ge \frac{c\delta^2}{28} \frac{\beta}{6M} = \frac{c\delta^2\beta}{168 M}:=\a.
$$
We have established \eqref{noequiint} (c).

Now we prove the bound for $\rho_{1,q}(f)$. Naturally we have
$$
\rho_{1,q}(f)\le \biggl(\frac{1}{2\pi} \int_{-\pi}^{\pi}
 \Bigl(\sum_{k=1}^N\varphi_k(e^{i\theta})\Bigr)^q   \, d\theta \biggr)^{1/q}.
$$
In order to apply the estimate in the Claim, the condition  
$|\theta-a_k|\le \pi$ has to be satisfied. Let us assume that all the $a_k$'s belong to 
the interval $[0,2\pi)$, then, for all $\theta\in (-\pi,\pi]$, either
$|\theta-a_k|\le \pi$ or $|\theta-(a_k-2\pi)|\le \pi$. Define 
$$
I_k^+:=[a_k-\varepsilon_k,a_k+\varepsilon_k]\qquad \text{and}\qquad
I_k^-:=[a_k-2\pi -\varepsilon_k,a_k-2\pi +\varepsilon_k].
$$
Observe that, if $g$ is the characteristic function of the
interval $[a-\varepsilon,a+\varepsilon]$ and $\mathcal{M}g$ is its Hardy-Littlewood maximal
function, we have
$\mathcal{M}g(t)=1$, if $t\in (a-\varepsilon,a+\varepsilon)$, 
and, for $t\notin (a-\varepsilon,a+\varepsilon)$,
$$
\mathcal{M}g(t)\ge \frac{1}{2|t-a|}\int_{t-|t-a|}^{t+|t-a|} g(t)\, dt=\frac{\varepsilon}{2|t-a|}\,.
$$
Now if  $g_k^+$ is the characteristic function of $I_k^+$ and 
$g_k^-$ is the characteristic function of $I_k^-$, 
using the Claim, we have, for every $\theta\in (-\pi,\pi]$,
$$
\varphi_k(e^{i\theta})\le 32 \bigl[\bigl(\mathcal{M}g_k^+\bigr)^2(\theta)
+ \bigl(\mathcal{M}g_k^-\bigr)^2(\theta)     \bigr].
$$
Therefore, 
$$
\rho_{1,q}(f)\le 32\biggl(\int_\R \Bigl(\sum_{k=1}^N
\bigl[\bigl(\mathcal{M}g_k^+\bigr)^2(t)
+ \bigl(\mathcal{M}g_k^-\bigr)^2(t)     \bigr]\Bigr)^q \,dt\biggr)^{1/q}
=32 \|H\|_{L^{2q}(\R)}^2,
$$
where 
$$
H=\Bigl(\sum_{k=1}^{N} \bigl[\bigl(\mathcal{M}g_k^+\bigr)^2 + \bigl(\mathcal{M}g_k^-\bigr)^2 \bigr]\Bigr)^{1/2}.
$$
Applying \cite[Theorem 1]{fefferman_stein_1971}, there exists a constant $A_{2,2q}>0$ such that
$$
\|H\|_{L^{2q}(\R)}\le A_{2,2q} \|h\|_{L^{2q}(\R)},\qquad \text{ for }\ 
h=\bigl(\sum_k [(g_k^+)^2 + (g_k^-)^2 ]\bigr)^{1/2}.
$$
Since all the intervals $I_k^+$'s and $I_k^-$'s are pairwise disjoint, we see easily that
$h$ is the characteristic of the union of all these intervals and so $\|h\|_{2q}\le (4\pi)^{1/2q}$.
Finally we get
$$
\rho_{1,q}(f) \le 32 \|H\|_{L^{2q}(\R)}^2\le 32 A_{2,2q}^2\|h\|_{L^{2q}(\R)}^2 
\le 32 A_{2,2q}^2 (4\pi)^{1/2q} =: C,
$$
and we finish because we have proved \eqref{noequiint} (b).

\noindent {\bf Proof of the Claim.} 
By rotation invariance, we can assume $a_k=0$. Then, for all $\theta\in [-\pi,\pi]$ and
all $r\in [0,1]$, we have $|re^{i\theta} - (1+\varepsilon_k)|\ge |(1+\varepsilon_k)-r|$.
This yields $|u_k(re^{i\theta})|\le |u_k(r)|$, and
\begin{equation}\label{menorque1}
\varphi_k(e^{i\theta})\le \varphi_k(e^{i0})=\int_0^1\frac{\varepsilon_k^2}{(1+\varepsilon_k -r)^3}\,dr
=\Bigl(\frac{\varepsilon_k^2}{ 2(1+\varepsilon_k -r)^2}\Bigr]_{r=0}^{r=1}\le \frac{1}{2} \le 1.
\end{equation}
On the other side, for all $z=re^{i\theta}\in\D$, we have
\begin{equation}\label{doscasos}
|1+\varepsilon_k-z|\ge |1-z|\ge 
\begin{cases}
 |\sin \theta|\ge 2|\theta|/\pi  ,&\quad \text{if }\  0<|\theta|< \pi/2 , \\
 1,&\quad\text{if }\ \pi/2\le |\theta|\le \pi .
\end{cases}
\end{equation}
Therefore, if $\pi/2\le |\theta|\le \pi$, we have 
\begin{equation*}
|1-re^{i\theta}|^3\ge | 1-re^{i\theta}|^2=1+r^2-2r\cos\theta\ge 1+r^2\qquad\text{and}
\qquad |u_k(re^{i\theta})|\le \frac{\varepsilon_k^2}{1+r^2}.
\end{equation*}
Integrating
\begin{equation}\label{alpi}
\varphi_k(e^{i\theta})\le \varepsilon_k^2 \int_0^1 \frac{dr}{1+r^2}=\frac{\pi\varepsilon_k^2}{4}\le
\frac{\pi^3}{32}\frac{8 \varepsilon_k^2}{|\theta|^2}\le \frac{8 \varepsilon_k^2}{|\theta|^2},
\qquad\text{if }\ \frac{\pi}{2}\le |\theta|\le \pi.
\end{equation}
For $1\le |\theta|\le \pi/2$, we use the first case in \eqref{doscasos}. We have
\begin{equation}\label{alpimedio}
\varphi_k(e^{i\theta})\le \varepsilon_k^2 \Bigl(\frac{\pi}{2|\theta|}\Bigr)^3
\le \frac{\pi^3\varepsilon_k^2}{8|\theta|^2}=
\frac{\pi^3}{64}\frac{8 \varepsilon_k^2}{|\theta|^2}\le \frac{8 \varepsilon_k^2}{|\theta|^2},
\qquad\text{if }\ 1\le  |\theta| \le \frac{\pi}{2}.
\end{equation}

Finally, for $|\theta|<1$, we have
\begin{equation*}
\varphi_k(e^{i\theta})\le\varepsilon_k^2\int_0^1 \frac{dr}{|1-re^{i\theta}|^3}\le 
\varepsilon_k^2\int_0^{1-|\theta|}\frac{dr}{(1-r)^3}+\varepsilon_k^2\int_{1-|\theta|}^1
\frac{\pi^3}{8|\theta|^3}\,dr,
\end{equation*}
and we get
\begin{equation}\label{yavahostia}
\varphi_k(e^{i\theta})\le \varepsilon_k^2\Bigl(\frac{1}{2|\theta|^2} + \frac{\pi^3}{8|\theta|^2} \Bigr)
\le\Bigl(\frac{1}{2}+\frac{\pi^3}{8}\Bigr)\frac{ \varepsilon_k^2}{|\theta|^2}
\le \frac{ 8\varepsilon_k^2}{|\theta|^2},
\qquad\text{if }\ 0\le  |\theta| <1.
\end{equation}
Putting together \eqref{menorque1}, \eqref{alpi},  \eqref{alpimedio},  and  
\eqref{yavahostia}, the lemma follows.
\end{proof}

\begin{theorem}\label{Thm:weakly-compact-RM(1,q)}
	Let $1<q<+\infty$ and $g\in \mathcal{B}$. Then  $T_g:RM(1,q)\rightarrow RM(1,q)$ is weakly compact if and only if $g\in \mathcal{B}_{0,w}$. 
\end{theorem}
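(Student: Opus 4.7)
The plan is to combine Proposition~\ref{Prop: weak-compact-1-q}, the Littlewood--Paley inequalities in both directions (Propositions~\ref{bergtypine} and~\ref{converse-little-paley}), and the key Theorem~\ref{technical result}. By Proposition~\ref{Prop: weak-compact-1-q}, weak compactness of $T_g\colon RM(1,q)\to RM(1,q)$ is equivalent to compactness of $T_g\colon RM(1,q)\to RM(1,1)$. Since every function in the range of $T_g$ vanishes at the origin, combining Propositions~\ref{bergtypine} and~\ref{converse-little-paley} at the index $(1,1)$ gives
$$
\rho_{1,1}(T_g f)\;\asymp\;\rho_{1,1}\bigl((T_g f)'(1-|z|)\bigr)\;=\;\tfrac{1}{2\pi}\,\|R_g f\|_{L^1([0,1)\times \T)},
$$
so the operator $T_g\colon RM(1,q)\to RM(1,1)$ and the operator $R_g\colon RM(1,q)\to L^1([0,1)\times \T)$ share their weak-compactness behavior.

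For the \emph{only if} direction, assume $T_g$ is weakly compact on $RM(1,q)$. Composing with the bounded inclusion $RM(1,q)\hookrightarrow RM(1,1)$ and with the bounded map $D\colon RM(1,1)\to L^1([0,1)\times \T)$, $D(h)(z):=h'(z)(1-|z|)$ (continuous by the converse Littlewood--Paley inequality at $(1,1)$), we deduce that $R_g=D\circ T_g$ is weakly compact. By Theorem~\ref{technical result}, this forces $g\in \mathcal{B}_{0,w}$.

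For the \emph{if} direction, let $g\in \mathcal{B}_{0,w}$. By Lemma~\ref{prop2comp}(1), it suffices to show that every bounded sequence $\{f_n\}\subset B_{RM(1,q)}$ converging to $0$ uniformly on compact subsets of $\D$ satisfies $\rho_{1,1}(T_g f_n)\to 0$; equivalently, by the forward direction of Proposition~\ref{bergtypine}, $\|R_g f_n\|_{L^1}\to 0$. Since $R_g f_n(z)=f_n(z)g'(z)(1-|z|)\to 0$ pointwise on $[0,1)\times \T$, Vitali's convergence theorem reduces the task to proving that the family $\{R_g f\colon f\in B_{RM(1,q)}\}$ is uniformly integrable in $L^1([0,1)\times \T)$.

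The main obstacle is precisely this uniform integrability, and it is where $g\in \mathcal{B}_{0,w}$ enters. Given $\varepsilon>0$, apply Egorov's theorem to the family $h_n(\theta):=\sup_{r\in (1-1/n,1)}(1-r)|g'(re^{i\theta})|$, which decreases to $0$ a.e.\ on $\T$: one obtains $\eta>0$, a measurable set $E\subset \T$ with $m(E)<\eta$, and $\rho\in(0,1)$ such that $(1-r)|g'(re^{i\theta})|<\eta$ whenever $r\in(\rho,1)$ and $e^{i\theta}\in \T\setminus E$. For a measurable $D\subset [0,1)\times \T$, split $D=D_1\cup D_2\cup D_3$ with $D_1:=D\cap([0,\rho]\times \T)$, $D_2:=D\cap((\rho,1)\times E)$, and $D_3:=D\cap((\rho,1)\times(\T\setminus E))$. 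On $D_1$, the point-evaluation bound (Proposition~\ref{main-properties}(1)) and $g\in\mathcal{B}$ give $|R_g f|\leq M_\rho$ uniformly on $B_{RM(1,q)}$; on $D_3$, $|R_g f|\leq \eta|f|$; on $D_2$, $|R_g f|\leq \|g\|_{\mathcal{B}}|f|$. Writing $F(\theta):=\int_0^1 |f(re^{i\theta})|\,dr$, so that $\|F\|_{L^q(\T)}=\rho_{1,q}(f)\leq 1$, the crucial step is Hölder's inequality (which requires $q>1$): $\int_E F\,d\theta\leq m(E)^{1/q'}\leq \eta^{1/q'}$. Collecting,
$$
\int_D |R_g f|\,dm_2 \;\leq\; M_\rho\, m_2(D)\;+\;\|g\|_{\mathcal{B}}\,\eta^{1/q'}\;+\;C_q\,\eta,
$$
so first choosing $\eta$ small and then $m_2(D)$ small gives the uniform integrability, completing the proof via Vitali.
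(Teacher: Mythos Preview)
Your proof is correct. The \emph{only if} direction is essentially the paper's argument: both reduce to Theorem~\ref{technical result} via the Littlewood--Paley equivalences, though your presentation as a composition $R_g=D\circ i\circ T_g$ with the bounded derivative map $D$ (using Proposition~\ref{converse-little-paley} at $(1,1)$) is a bit more streamlined than the paper's transfer through Lemma~\ref{lemmaweakcompact1q}.

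The \emph{if} direction, however, takes a genuinely different route. The paper works directly on $RM(1,q)$: it uses the reflexivity of $L^q(\T)$ to extract a weakly convergent subsequence of the scalar functions $H_n(\theta)=\int_0^1|f_n g'|(1-r)\,dr$, identifies the weak limit as $0$ via an Egorov-type exhaustion of $\T$, and then applies Mazur's theorem and Lemma~\ref{lemmaweakcompact1q} to produce convex combinations with $\rho_{1,q}(T_g g_k)\to 0$. You instead target the equivalent statement from Proposition~\ref{Prop: weak-compact-1-q}(2), proving \emph{compactness} of $T_g\colon RM(1,q)\to RM(1,1)$ outright: Egorov on the decreasing family $h_n(\theta)=\sup_{r>1-1/n}(1-r)|g'(re^{i\theta})|$ plus H\"older on the exceptional set $E$ (this is where $q>1$ enters, via the factor $m(E)^{1/q'}$) gives uniform integrability of $R_g(B_{RM(1,q)})$ in $L^1([0,1)\times\T)$, and Vitali finishes. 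This is a clean converse to the non-uniform-integrability argument of Theorem~\ref{technical result}: you show directly that $g\in\mathcal{B}_{0,w}$ is equivalent to $R_g(B_{RM(1,q)})$ being uniformly integrable, which by Dunford--Pettis is exactly weak compactness of $R_g$. The paper's approach avoids Vitali and stays in $L^q(\T)$, at the cost of the extra Mazur/convex-combination step; yours is arguably more transparent and delivers the slightly stronger conclusion (compactness into $RM(1,1)$) without detouring through weak compactness on $RM(1,q)$ first.
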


\begin{proof} Assume that $g\in \mathcal{B}_{0,w}$. 
For each $\varepsilon,\delta >0$, we  set
	\begin{align*}
	A(\delta,\varepsilon):=\left\{\theta \in \mathbb{T}: \ (1-r) |g'(re^{i\theta})|<\varepsilon,\quad {\textrm{ for all } } r\in (1-\delta,1) \right\}.
	\end{align*}
Fixed $m\in \N$. By hypothesis, we have that 
	\begin{align*}
	m\left(A\left(\frac{1}{n},\frac{1}{2^m}\right)\right)\rightarrow 1
	\end{align*}
	for $n\rightarrow \infty$. Moreover, it can be seen that $A\left(\frac{1}{n},\frac{1}{2^m}\right)\subset A\left(\frac{1}{n+1},\frac{1}{2^m}\right)$.
	Hence, for each $m\in\N$ there is $n_m\in \N$ such that 
	\begin{align*}
	m\left(A\left(\frac{1}{n_m},\frac{1}{2^m}\right)\right)>1-\frac{1}{m^2}.
	\end{align*}
	So, we have that
	\begin{equation}\label{Eq:sufficient-weakly-compact}
	\lim\limits_{k\rightarrow \infty } m\left(\bigcap_{m\geq k} A\left(\frac{1}{n_m},\frac{1}{2^m}\right)\right)=1.
	\end{equation}
	
Fix $\varepsilon>0$, by \eqref{Eq:sufficient-weakly-compact}, there is $k=k(\varepsilon)$ such that $m(A_{\varepsilon})>1-\varepsilon$ where
\begin{equation}\label{Eq:sufficient-weakly-compact2}
A_{\varepsilon}:=\bigcap_{m\geq k} A\left(\frac{1}{n_m},\frac{1}{2^m}\right).
\end{equation}
This means that given $\theta \in A_{\varepsilon}$, for each $m\geq k$, 
$$
(1-r) |g'(re^{i\theta})|<1/2^{m}
$$
whenever $1-1/n_{m}<r<1$. 	

To obtain the weak compactness, we apply  Lemma~\ref{lemmaweakcompact1q}.   
Let $\{f_n\}\in B_{RM(1,q)}$ be a sequence uniformly convergent to $0$ on compact sets. Define the functions
\begin{align*}
	H_{n}(\theta):=\int_{0}^{1} |f_{n}(re^{i\theta})g'(re^{i\theta})|(1-r)\ dr \quad \textrm{ and }\quad  F_{n}(\theta):=\int_{0}^{1} |f_{n}(re^{i\theta})|\ dr.
\end{align*}
Using that $T_g$ is bounded on $RM(1,q)$ and Proposition  \ref{bergtypine}, the sequence $\{H_{n}\}$ is bounded on $L^q(\mathbb{T})$. 
Then, by the reflexivity of  this space, we can find a subsequence $\{H_{n_k}\}$ convergent in the weak topology to a function $h\in L^q(\mathbb{T})$. Therefore, there is $G_{k}\in \textrm{co}\{H_{n_k},H_{n_{k+1}},\dots\}$ such that $\|G_{k}-h\|_{L^q(\mathbb{T})}\rightarrow 0$.
We claim that $h=0$. To  settle this fact, fix $\varepsilon >0$. By \eqref{Eq:sufficient-weakly-compact2}, there are $N=N(\varepsilon)\in \N$ and  a measurable set $A_{\varepsilon}$ with $m(A_{\varepsilon})>1-\varepsilon$ and for every $\theta \in A_{\varepsilon}$ and  $m\geq N$, 
$$
(1-r) |g'(re^{i\theta})|<1/2^{m}
$$
whenever $1-1/n_{m}<r<1$. 
We may assume that $1/2^{N}<\epsilon$ and that for $m\geq N$ and $r<1-1/n_{N}$,
$$
|f_{m}(re^{i\theta})g'(re^{i\theta})|(1-r)\leq \varepsilon
$$
(remember that the sequence $\{f_{n}\}$ converge uniformly to  $0$ on the disc center at $0$ and radius $1-1/2^{N}$). 
Thus, for $n\geq N$ and $\theta\in A_\varepsilon$,
\begin{equation}\label{ineqAset}
\begin{split}
H_{n}(\theta)&=\int_{0}^{1-1/n_{N}} |f_{n}(re^{i\theta})g'(re^{i\theta})|(1-r)\ dr+ \int_{1-1/n_{N}}^{1} |f_{n}(re^{i\theta})g'(re^{i\theta})|(1-r)\ dr\\
&\leq \varepsilon+\varepsilon F_{n}(\theta)
\end{split}
\end{equation}
So, it follows that $\|H_n \chi_{A_\varepsilon}\|_{L^q (\mathbb{T})}\leq 2\varepsilon$ for all $n>N$. Hence, $\|G_n \chi_{A_\varepsilon} \|_{L^q(\mathbb{T})}<3\varepsilon $ for $n$ large enough. This implies  that $h\chi_{A_\varepsilon} =0$. The arbitrariness of $\varepsilon $ and the fact that $m(A_{\varepsilon})>1-\varepsilon$ implies that $h=0$ and $\|G_{k}\|_{L^q(\mathbb{T})}\rightarrow 0$.

 Notice that we can express $G_{k}$ in the following way
	\begin{align*}
	G_{k}=\sum_{j=k}^{\infty}\alpha_{k,j} H_{n_j}
	\end{align*}
	where $\alpha_{k,j}\geq 0$, $\sum_{j=k}^{\infty} \alpha_{k,j}=1$ and, for each $k$, the set $\{j\geq k:\, \alpha_{k,j}\neq 0\}$ is finite. Thus the functions
	\begin{align*}
	g_{k}:=\sum_{j=n}^{\infty}\alpha_{k,j} f_{n_j},
	\end{align*}
	are well-defined and it follows that 
	\begin{align*}
	\int_{0}^{1} |T_{g} g_{k}|\ dr\leq \sum_{j=k}^{\infty}\alpha_{k,j} H_{n_j}=G_{k}(\theta).
	\end{align*}
	Hence $\rho_{1,q}(T_{g}g_k)\rightarrow 0$ when $k\rightarrow \infty$. Therefore, using Lemma~\ref{lemmaweakcompact1q} we conclude that $T_g:RM(1,q)\rightarrow RM(1,q)$ is weakly compact.
	
For the converse implication assume that $g\in \mathcal B\setminus \mathcal B_{0,w}$. By Propositions \ref{bergtypine} and \ref{converse-little-paley}, given $f\in RM(1,q)$, it holds that $\rho_{1,q}(T_{g}f)\asymp \Vert R_{g}f\Vert _{L^{q}(\T, L^{1}([0,1]))}$ where $R_{g}$ is the operator introduced in Theorem \ref{technical result} with the identification $L^{q}(\T, L^{1}([0,1]))\subset L^{1}(\T, L^{1}([0,1]))= L^{1}([0,1)\times \T)$. Therefore, $R_{g}:RM(1,q) \to L^{1}([0,1)\times \T)$ is bounded and not weakly compact. 
By Lemma \ref{lemmaweakcompact1q}, there exists a sequence $\{f_n\}$ in the unit ball of $RM(1,q)$  convergent to $0$ uniformly on compact sets of $\D$ such that no convex combination $g_k\in \mathrm{co}\{f_k,f_{k+1},\dots\}$ satisfies that $ \Vert R_{g}g_{k}\Vert _{L^{1}(\T, L^{1}([0,1]))  }\rightarrow 0$ when $k\rightarrow \infty$. Applying again Propositions \ref{converse-little-paley},  no convex combination $g_k\in \mathrm{co}\{f_k,f_{k+1},\dots\}$ satisfies that $ \rho_{1,1}(T_{g}g_{k})\rightarrow 0$ when $k\rightarrow \infty$. By Lemma \ref{lemmaweakcompact1q}, $T_{g}:RM(1,q)\to RM(1,1)$ is not weakly compact and, by Proposition \ref{Prop: weak-compact-1-q}, $T_{g}:RM(1,q)\to RM(1,q)$ is not weakly compact.
\end{proof}

We point out that beyond what it seems in Proposition \ref{Prop: weak-compact-1-q}, the weak compactness $T_g:RM(1,q)\rightarrow RM(1,q)$ does not depend on $q$ when it runs the interval $q\in (1,+\infty)$.

\begin{remark} Using \cite[Proposition 5.4, p. 601]{ramey_bounded_1991}, there are $g_1,g_2\in\mathcal{B}$ such that
	\begin{align*}
	|g'_1(z)|+|g'_2(z)|\geq \frac{1}{1-|z|}, \quad \textrm{ for all } z\in \D.
	\end{align*}
Therefore either $g_{1}$ or $g_{2}$ does not belong to $\mathcal{B}_{0,w}$, so that $\mathcal{B} \setminus \mathcal{B}_{0,w}$ is not empty.  
Moreover the function $g(z)=\log(1-z)$, $z\in \D$, belongs to $\mathcal{B}_{0,w}\setminus\mathcal{B}_{0}$. In fact, writing $g_{\theta}(z)=\log(1-ze^{i\theta})$ for $\theta\in[0,\pi)$ and $z\in \D$, one can see that $\|g_{\theta}-g_{\tilde{\theta}}\|_{\mathcal{B}}\geq 1$ if $\theta\neq \tilde{\theta}$. Then $\mathcal{B}_{0,w}$ is a non-separable closed  subspace of $\mathcal{B}$. The separability of  $\mathcal{B}_{0}$ and the non-separability of  $\mathcal{B}_{0,w}$ show that the second one is much bigger than the first one. Therefore there are integral operators $T_{g}$ bounded and not weekly compact on $RM(1,q)$ and integral operators $T_{g}$ weakly compact and not compact. 
\end{remark}

\begin{remark}
By \cite[Proposition 4.8]{Pombook92}, if $g\in \mathcal H(\D)$ and $\Im g$ has a finite angular limit at $e^{it}$, then $(z- e^{it})g'(z)$ has angular limit $0$ at $e^{it}$. 
This result implies that if $g\in \mathcal B$ and    $\Im g$ has a finite angular limit at $e^{it}$ for almost every $e^{it}\in \T$, then $g\in \mathcal B_{0,w}$. In particular, $H^{p}\subset \mathcal B_{0,w}$. This last inclusion was firstly noticed by Pavlovi\'c \cite[Corollary, 2.1]{Pavlovic}. 
\end{remark}

\subsection{The case $q=1$} To finish, we turn our attention to the weak compactness of $T_{g}: RM(p,1)\rightarrow RM(p,1)$. The following three lemmas will be necessary to give a characterization of the weak compactness of $T_g$ by means of sequences in $(RM(p,1))^\ast$ which are equivalent to the basis of $c_0$.

\begin{lemma}\label{deltabasisc0}
Let $1\leq p\leq +\infty$ and 	let $\{z_n\}$ be such that there are constants $C_1,C_2,C_3>0$ satisfying:
	\begin{enumerate}
		\item $\sum_{n=1}^{\infty} |f(z_n)|(1-|z_n|)^{1+\frac{1}{p}}\leq C_1 \rho_{p,1}(f)$ for all $f\in RM(p,1)$.
		\item For all $m\in \N$, there is $f_m\in RM(p,1)$ with $\rho_{p,1}(f_m)\leq C_2$ such that
		\begin{align*}
		|f_{m}(z_m)|(1-|z_m|)^{1+\frac{1}{p}}\geq \frac{1}{C_3}, \qquad
		\sum_{n\neq m} |f_{m}(z_n)|(1-|z_n|)^{1+\frac{1}{p}} \leq \frac{1}{2C_3}.
		\end{align*}
	\end{enumerate}
	Then $\{ (1-|z_n|)^{1+\frac{1}{p}}\ \delta_{z_n}\}$ is equivalent to the basis of $c_0$ in $(RM(p,1))^{\ast}$.
\end{lemma}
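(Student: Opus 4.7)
\smallskip

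\textbf{Proof plan.} To prove that $\{(1-|z_n|)^{1+1/p}\delta_{z_n}\}$ is equivalent to the $c_0$-basis, I want to show that the linear map $T\colon c_0\to (RM(p,1))^*$ defined on the canonical basis by $T(e_n)=(1-|z_n|)^{1+1/p}\delta_{z_n}$ extends to an isomorphism onto its closed image. Concretely, it is enough to produce constants $c,C>0$ such that for every finitely supported scalar sequence $\{a_n\}$,
$$
c\sup_n|a_n|\ \leq\ \Bigl\|\sum_n a_n(1-|z_n|)^{1+1/p}\delta_{z_n}\Bigr\|_{(RM(p,1))^*}\ \leq\ C\sup_n|a_n|.
$$

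The upper bound is immediate from hypothesis (1): for any $f\in RM(p,1)$ with $\rho_{p,1}(f)\leq 1$,
$$
\Bigl|\sum_n a_n(1-|z_n|)^{1+1/p}f(z_n)\Bigr|\ \leq\ \sup_n|a_n|\,\sum_n|f(z_n)|(1-|z_n|)^{1+1/p}\ \leq\ C_1\sup_n|a_n|.
$$
Taking the supremum over $f$ in the unit ball gives the upper estimate with $C=C_1$.

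For the lower bound, fix a finite sequence $\{a_n\}$ and choose an index $m$ where $|a_m|=\sup_n|a_n|$. I will use the test function $f_m$ supplied by hypothesis~(2), multiplied by a unimodular constant $\theta$ chosen so that $\theta\, a_m f_m(z_m)=|a_m f_m(z_m)|$. Setting $f=\theta f_m$, still $\rho_{p,1}(f)\leq C_2$, and separating the $m$-th term from the rest and using the two estimates in (2), I get
$$
\Bigl|\sum_n a_n(1-|z_n|)^{1+1/p}f(z_n)\Bigr|\ \geq\ \frac{|a_m|}{C_3}\ -\ \sup_n|a_n|\sum_{n\neq m}|f_m(z_n)|(1-|z_n|)^{1+1/p}\ \geq\ \frac{|a_m|}{C_3}-\frac{|a_m|}{2C_3}\ =\ \frac{|a_m|}{2C_3}.
$$
Dividing by $\rho_{p,1}(f)\leq C_2$ yields the lower bound with $c=(2C_2C_3)^{-1}$.

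Having the two-sided norm equivalence on finite linear combinations, the map $T$ extends by continuity (and density of finitely supported sequences in $c_0$) to a bounded linear operator from $c_0$ into $(RM(p,1))^*$; the upper bound ensures that partial sums of $\sum a_n T(e_n)$ form a Cauchy sequence when $\{a_n\}\in c_0$, so the extension is well defined, and the lower bound passes to the limit to show $T$ is an isomorphism onto its image. The only mild subtlety, and the step that requires the choice of phase $\theta$, is the isolation of the diagonal term $a_m f_m(z_m)$; everything else is essentially bookkeeping with the two hypotheses.
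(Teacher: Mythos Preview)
Your proof is correct and follows essentially the same route as the paper: both establish the two-sided estimate $c\sup_n|a_n|\le\|\sum a_n(1-|z_n|)^{1+1/p}\delta_{z_n}\|\le C\sup_n|a_n|$ on finite sums, using hypothesis~(1) for the upper bound and testing against $f_m$ (with $m$ maximizing $|a_m|$) for the lower bound. The unimodular phase $\theta$ you introduce is harmless but unnecessary, since the reverse triangle inequality $|\sum_n\cdots|\ge|a_m f_m(z_m)|(1-|z_m|)^{1+1/p}-\sum_{n\ne m}|\cdots|$ already isolates the diagonal term without adjusting the argument.
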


\begin{proof} We will present the proof for $p$ finite, being the other case similar. It is sufficient to prove that there are constants $A>0$ and $B>0$ such that
	\begin{align*}
	A \max_{1\leq k\leq N}\{|\alpha_{k}|\}\leq \left\|\sum_{k=1}^{N} \alpha_{k} (1-|z_k|)^{1+\frac{1}{p}}\delta_{z_k}\right\|_{(RM(p,1))^{\ast}} \leq B \max_{1\leq k\leq N}\{|\alpha_{k}|\}
	\end{align*}
	for every $N$ and for every sequence $\{\alpha_{k}\}$.
	
	First, using assertion (1) and Proposition \ref{main-properties}, we have that
	\begin{align*}
	&\left\|\sum_{k=1}^{N} \alpha_{k} (1-|z_k|)^{1+\frac{1}{p}}\delta_{z_k}\right\|_{(RM(p,1))^{\ast}} =\sup_{f\in B_{RM(p,1)}} \left|\sum_{k=1}^{N} \alpha_{k} (1-|z_k|)^{1+\frac{1}{p}}f(z_k)\right|\\
	&\leq \max_{1\leq k\leq N}\{|\alpha_{k}|\} \sup_{f\in B_{RM(p,1)}}\sum_{k=1}^{\infty} |f(z_k)|(1-|z_k|)^{1+\frac{1}{p}} \leq C_1 \max_{1\leq k\leq N}\{|\alpha_{k}|\}.
	\end{align*}
	
	The remaining inequality proceeds as follows employing this time assertion (2). We choose $m$ such that $\max_{1\leq k\leq N}\{|\alpha_{k}|\}=|\alpha_m|$. Then, we obtain that
	\begin{align*}
	&\left\|\sum_{k=1}^{N} \alpha_{k} (1-|z_k|)^{1+\frac{1}{p}}\delta_{z_k}\right\|_{(RM(p,1))^{\ast}}\geq \frac{1}{C_2} \left|\sum_{k=1}^{N} \alpha_{k} (1-|z_k|)^{1+\frac{1}{p}}f_{m}(z_k)\right|\\
	&\geq \frac{|\alpha_m|}{C_2} (1-|z_m|)^{1+\frac{1}{p}}|f_{m}(z_m)|-\frac{1}{C_2}\sum_{k=1,\, k\neq m}^{N} |\alpha_{k}|(1-|z_k|)^{1+\frac{1}{p}}|f_{m}(z_k)|\\
	&\geq \frac{1}{2C_2C_3}\max_{1\leq k\leq N}\{|\alpha_{k}|\}.
	\end{align*}
\end{proof}

\begin{lemma}\label{meanlittwind}
	Let $c\in (0,1/2)$, then there are two constants $\mu_1,\mu_2$, depending only on $c$, such that for all $ f\in \mathcal{H}(\D)$ 
	\begin{align*}
	|f(z)|&\leq \frac{\mu_1}{(1-|z|)^2}\int_{\theta-c(1-r)}^{\theta+c(1-r)} \left(\int_{r-c(1-r)}^{r+c(1-r)}|f(\rho e^{i t})|\ d\rho\right)\ dt, \\
	(1-|z|)|f'(z)|&\leq \frac{\mu_2}{(1-|z|)^2}\int_{\theta-c(1-r)}^{\theta+c(1-r)} \left(\int_{r-c(1-r)}^{r+c(1-r)}|f(\rho e^{i t})|\ d\rho\right)\ dt,
	\end{align*} 
	where $z=re^{i\theta}$ with $1>r\geq \frac{1}{2}$.
\end{lemma}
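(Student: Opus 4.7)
The plan is to reduce both inequalities to standard sub-mean value estimates on a Euclidean disk $D(z,s)$ of radius $s\asymp 1-r$ centred at $z$, and then to check that for a judicious choice of $s=s(c)$ this disk is contained in the polar rectangle
$$
P_z := \{\rho e^{it}:\ |\rho-r|<c(1-r),\ |t-\theta|<c(1-r)\}
$$
appearing on the right-hand side of the statement.

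First, I would record two disk estimates valid whenever $\overline{D(z,s)}\subset \D$:
\begin{align*}
|f(z)| &\le \frac{1}{\pi s^{2}} \int_{D(z,s)} |f(w)|\,dA(w), \\
|f'(z)| &\le \frac{2}{\pi s^{3}} \int_{D(z,s)} |f(w)|\,dA(w).
\end{align*}
The first is the sub-mean value property of the subharmonic function $|f|$. The second follows from a reproducing-type identity: expanding $f(z+\zeta)=\sum_{n\ge 0}a_n\zeta^n$ around $z$ and using orthogonality of $\{e^{in\phi}\}$ on $[0,2\pi]$, one obtains
$$
\int_{D(z,s)} f(w)\,\overline{(w-z)}\,dA(w) = \frac{\pi s^{4}}{2}\,f'(z),
$$
and bounding $|w-z|\le s$ gives the claimed inequality for $|f'|$.

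Next, I would show $D(z,s)\subset P_z$ for $s:=c(1-r)/10$. Writing $w=z+\delta$ with $|\delta|<s$ and $w=\rho e^{it}$, the triangle inequality gives $|\rho-r|\le|\delta|$. Taking imaginary parts in the identity $\rho e^{i(t-\theta)}=r+\delta e^{-i\theta}$ yields $|\sin(t-\theta)|\le|\delta|/\rho$. Since $r\ge 1/2$ and $c<1/2$, the choice $s=c(1-r)/10$ forces $\rho\ge r-s\ge 1/4$, so $|\sin(t-\theta)|\le 4|\delta|$ and hence $|t-\theta|\le 2\pi|\delta|<c(1-r)$; similarly $|\rho-r|<c(1-r)$. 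Passing to polar coordinates, $dA(w)=\rho\,d\rho\,dt$, and since $\rho<1+c<2$ on $P_z$,
$$
\int_{D(z,s)} |f|\,dA \le \int_{P_z} |f(\rho e^{it})|\,\rho\,d\rho\,dt \le 2\int_{P_z} |f(\rho e^{it})|\,d\rho\,dt.
$$
Inserting $s=c(1-r)/10$ into the two estimates from the previous paragraph produces both inequalities with constants $\mu_1,\mu_2$ depending only on $c$.

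The only place that requires care is the geometric comparison between the Euclidean disk and the polar rectangle, because $P_z$ is anisotropic in the Euclidean metric. The hypotheses $r\ge 1/2$ and $c<1/2$ are tailored precisely so that $\rho$ remains bounded away from $0$ (and below $2$) on $P_z$, which makes the elementary estimates above sufficient and keeps all implicit constants absolute.
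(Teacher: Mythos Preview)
Your proof is correct and follows essentially the same route as the paper's: fit a Euclidean disk $D(z,s)$ with $s$ a small fixed multiple of $c(1-r)$ inside the polar rectangle $P_z$, then apply the sub-mean value property (the paper uses $s=\lambda c(1-r)$ with $\lambda<1/\sqrt{4+c^{2}}$, you use $s=c(1-r)/10$). For the derivative estimate the paper simply declares the argument ``analogous,'' whereas you supply the reproducing identity $\int_{D(z,s)} f(w)\,\overline{(w-z)}\,dA(w)=\tfrac{\pi s^{4}}{2}f'(z)$, which is a clean explicit justification.
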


\begin{proof}
	We will prove the first inequality, since the proof of the last one is analogous. 
	
	Let $c\in (0,1/2)$, $z=re^{i\theta}\in \D$ and $f\in \mathcal{H}(\D)$. It can be proved that for $\lambda \in \left(0,\frac{1}{\sqrt{4+c^2}}\right)$ we have that
	\begin{align*}
	D(z,\lambda c (1-|z|))\subset \left\{ \rho e^{i t}\in\D :\ \rho\in I, t\in J\right\}
	\end{align*}
	where $I=[r-c(1-r),r+c(1-r)]$ and $J=[\theta-c(1-r),\theta+c(1-r)]$.

	Applying the mean value inequality over $D(z,\lambda c (1-|z|))$ we have that
	\begin{align*}
	|f(z)|&\leq \frac{1}{\pi\lambda^2 c^2 (1-r)^2} \int_{D(z,\lambda c (1-|z|))} |f(w)|\ dw\\
	&\leq \frac{1}{\pi\lambda^2 c^2 (1-r)^2} \int_{\theta-c(1-r)}^{\theta+c(1-r)} \left(\int_{r-c(1-r)}^{r+c(1-r)}|f(\rho e^{i t}|\ d\rho\right)\ dt.
	\end{align*}
\end{proof}

\begin{lemma}\label{sequencedeltaequibasisc0}
	Let $1\leq p<+\infty$ and $\{z_k\}\subset \D\setminus\frac{1}{2}\D$ such that $\frac{1-|z_{n+1}|}{1-|z_n|}\leq \frac{\beta}{n^2}$ with $\beta \in \left(0,\frac{1}{1+2^{4+\frac{1}{p}}}\right)$. Then both $\{ (1-|z_n|)^{1+\frac{1}{p}}\ \delta_{z_n}\}$ and $\{ (1-|z_n|)^{2+\frac{1}{p}}\ \delta'_{z_n}\}$ are equivalent to the basis of $c_0$ in $(RM(p,1))^{\ast}$.
\end{lemma}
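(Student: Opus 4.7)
The plan is to apply Lemma \ref{deltabasisc0} (and its straightforward analog with $\delta_{z_n}$ replaced by $\delta'_{z_n}$) to each of the two sequences. First I fix $c\in(0,1/2)$ with $(1-c)/(1+c)>\beta$; such a $c$ exists since $\beta<1/(1+2^{4+1/p})<1$. With this choice the radial intervals $I_n:=[|z_n|-c(1-|z_n|),|z_n|+c(1-|z_n|)]$ are pairwise disjoint, and so are the Euclidean disks $D(z_n,c(1-|z_n|))\subset\D$ containing the windows of Lemma \ref{meanlittwind}.

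For condition (2) I take the single reproducing-type test function
\[
f_m(z)=\frac{(1-|z_m|)^a}{(1-\overline{z_m}z)^{a+1+1/p}},\qquad a>0,
\]
which serves both sequences at once. The classical integral estimates $\int_0^1 d\rho/|1-\overline{z_m}\rho e^{it}|^s\asymp|1-\overline{z_m}e^{it}|^{-(s-1)}$ and $\int_0^{2\pi}dt/|1-\overline{z_m}e^{it}|^s\asymp(1-|z_m|)^{-(s-1)}$, valid for $s>1$, applied in succession, give $\rho_{p,1}(f_m)\le C$ uniformly in $m$. A direct computation shows that $|f_m(z_m)|(1-|z_m|)^{1+1/p}\asymp 1$ and $|f_m'(z_m)|(1-|z_m|)^{2+1/p}\asymp 1$: the exponent $b=a+1+1/p$ is precisely the common balance point. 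For the off-diagonal sums I use the elementary inequality $|1-\overline{z_m}z_n|\ge\max(1-|z_n|,1-|z_m|)$, which reduces each term to $(u_m/u_n)^a$ (for $n<m$) or $(u_n/u_m)^{1+1/p}$ and $(u_n/u_m)^{2+1/p}$ (for $n>m$), where $u_k:=1-|z_k|$. The hypothesis $u_{k+1}/u_k\le\beta$ then telescopes these into geometric series in $\beta$ whose sum can be made smaller than $1/(2C_3)$ because $\beta$ is small.

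For condition (1) I invoke Lemma \ref{meanlittwind}: the first inequality for the point-evaluation sequence, the second for the derivative sequence. After H\"older's inequality in the radial variable, both produce in a unified way
\[
|f(z_n)|(1-|z_n|)^{1+1/p}\quad\text{and}\quad|f'(z_n)|(1-|z_n|)^{2+1/p}\le C\int_{J_n}\Big(\int_{I_n}|f(\rho e^{it})|^p\,d\rho\Big)^{1/p}dt,
\]
where $J_n:=[\theta_n-c(1-|z_n|),\theta_n+c(1-|z_n|)]$. Summing over $n$ and exploiting the pairwise disjointness of the radial windows $I_n$, together with the super-exponential decay $(1-|z_n|)\le(1-|z_1|)\beta^{n-1}/((n-1)!)^2$ implied by $u_{n+1}/u_n\le\beta/n^2$ (which makes $\sum_n(1-|z_n|)^s$ finite for every $s>0$), leads to the bound by $C\rho_{p,1}(f)$.

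I expect the most delicate point to be the summation step for condition (1) when $p>1$: unlike the radial intervals, the angular windows $J_n$ need not be disjoint and in fact nest whenever several of the angles $\theta_n$ coincide. To close the estimate one has to exploit the dyadic separation $k_{n+1}-k_n>4+1/p$ (where $k_n:=\lfloor\log_2(1/u_n)\rfloor$) that the hypothesis $\beta<1/(1+2^{4+1/p})$ forces, in combination with the extra factor $1/n^2$, so that the overlap function $|S(t)|:=\#\{n:t\in J_n\}$, although not uniformly bounded, integrates against $G(t):=(\int_0^1|f(\rho e^{it})|^p\,d\rho)^{1/p}$ with an absolute constant times $\int_0^{2\pi}G(t)\,dt=2\pi\rho_{p,1}(f)$.
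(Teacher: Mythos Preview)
Your verification of condition~(2) in Lemma~\ref{deltabasisc0} is essentially the paper's argument (the paper takes exactly $a=1$). One small caveat: the specific upper bound $\beta<1/(1+2^{4+1/p})$ in the hypothesis is calibrated precisely to $a=1$, since with that choice $C_3=2^{2+1/p}$ and the off-diagonal sum is bounded by $\tfrac{2\beta}{1-\beta}$, and the inequality $\tfrac{2\beta}{1-\beta}<\tfrac{1}{2C_3}$ is \emph{equivalent} to $\beta<1/(1+2^{4+1/p})$. So you should fix $a=1$ rather than leave it free; otherwise ``because $\beta$ is small'' is not enough, since $\beta$ is given, not chosen.

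The genuine gap is in condition~(1). Your closing claim --- that the overlap function $|S(t)|=\#\{n:t\in J_n\}$, though unbounded, satisfies $\int_0^{2\pi}|S(t)|\,G(t)\,dt\le C\int_0^{2\pi}G(t)\,dt$ for every $G\ge 0$ in $L^1(\T)$ --- is simply false: such an inequality forces $|S|\in L^\infty$, which you yourself note fails (e.g.\ when all $\theta_n$ coincide, $|S(t)|\asymp\log(1/|t-\theta_0|)$). The disjointness of the \emph{radial} windows does not rescue this for $p>1$: at best it gives $\sum_{n\in S(t)}\bigl(\int_{I_n}|f|^p\bigr)^{1/p}\le |S(t)|^{1/p'}G(t)$ via H\"older on the finite sum, and you are back to needing $|S|^{1/p'}$ bounded. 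Neither the dyadic gap $k_{n+1}-k_n>4+1/p$ nor the extra $1/n^2$ factor helps here, because the obstruction is pointwise in $t$ and $G$ is only $L^1$.

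The paper's fix is different and cleaner: it disjointifies the \emph{angular} arcs. Writing (in the paper's notation) $I_n$ for the angular arc and $A_n:=\bigcup_{k>n}I_k$, the sets $\{I_n\setminus A_n\}_n$ are pairwise disjoint, so the sum of $\int_{I_n\setminus A_n}\bigl(\int_{J_n}|f|^p\bigr)^{1/p}dt$ is trivially $\lesssim\rho_{p,1}(f)$. On the removed tail $A_n$ one abandons $G$ altogether and uses the pointwise bound $|f(\rho e^{it})|\le C\rho_{p,1}(f)(1-\rho)^{-1-1/p}$ from Proposition~\ref{main-properties}(1); since $m(A_n)\le\sum_{k>n}2c\,\varepsilon_k\lesssim\varepsilon_n/n^2$ (this is where the $1/n^2$ in the hypothesis is actually used), the tail contributes $\lesssim\sum_n 1/n^2<\infty$. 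This is the missing idea in your sketch.
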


\begin{proof}
	We will prove the result just for $\{ (1-|z_n|)^{1+\frac{1}{p}}\ \delta_{z_n}\}$ using Lemma~\ref{deltabasisc0} and omit the proof of the other case because it can be obtained following a similar argument.
	Set $z_n=r_{n} e^{i\theta_n}$ and $\varepsilon_n=1-r_n$. For a certain constant $c\in (0,1/2)$ we define the sets
	$$I_{n}:=[\theta_n-c(1-r_n),\theta_n+c(1-r_n)]$$ and $$J_{n}:=[r_n-c(1-r_n),r_n+c(1-r_n)].$$ Now, we denote by $A_{n}:=\cup_{k>n} I_k$ where $m(A_n)\leq \sum_{k=n+1}^{\infty} 2c\varepsilon_{k}\leq \frac{1}{n^2} \varepsilon_n$.
	
	Let $f\in B_{RM(p,1)}$. Applying Lemma~\ref{meanlittwind} to each element of the sequence $\{z_n\}$, we obtain that 
	\begin{align*}
	(1-|z_n|)^{1+\frac{1}{p}} |f(z_n)| \leq \frac{\mu_1}{\varepsilon_{n}^{1/p'}} \int_{I_n} \int_{J_n} |f(\rho e^{it})|\ d\rho\,  dt.
	\end{align*}
	We split the integrals,  apply H\"older's inequality in the first integral and  Proposition \ref{main-properties}(1)  in the second integral:
	\begin{align*}
	(1-|z_n|)&^{1+\frac{1}{p}} |f(z_n)| \leq \frac{\mu_1}{\varepsilon_{n}^{1/p'}} \int_{I_n\setminus A_n} \int_{J_n} |f(\rho e^{it})|\, d\rho dt+\frac{\mu_1}{\varepsilon_{n}^{1/p'}} \int_{A_n} \int_{J_n} |f(\rho e^{it})|\ d\rho \, dt\\
	&\leq \frac{\mu_1 (2c\varepsilon_{n})^{1/p'}}{\varepsilon_{n}^{1/p'}} \int_{I_n\setminus A_n} \left(\int_{J_n} |f(\rho e^{i t})|^{p}\ d\rho\right)^{1/p}\, dt +\frac{C\mu_1 m(A_n) m(J_n)}{\varepsilon_{n}^{1/p'} (1-(r_n+c(1-r_n)))^{1+\frac{1}{p}}} \\
	&\leq \mu_1 (2c)^{1/p'}\int_{I_n\setminus A_n} \left(\int_{J_n} |f(\rho e^{i t})|^{p}\ d\rho\right)^{1/p}\, dt + \frac{2Cc \mu_1 }{n^2 (1-c)^{1+\frac{1}{p}}}.
	\end{align*}
	Since  $\{I_n\setminus A_n\}$ are disjoint sets, we have that
	\begin{align*}
	\sum_{n=1}^{\infty}(1-|z_n|)^{1+\frac{1}{p}} |f(z_n)|\leq \mu_1 (2c)^{1/p'}+\frac{\pi^2 C c \mu_1}{3 (1-c)^{1+\frac{1}{p}}}=C_1.
	\end{align*}
	Hence, we have proved that the sequence $\{z_{n}\}$ satisfies statement (1) of Lemma~\ref{deltabasisc0}. To prove the remaining condition we consider the family of holomorphic functions $f_n(z):=\frac{1-r_n}{(1-\overline{z_n}z)^{2+\frac{1}{p}}}$, $z\in \D$. We have to show that  $\{f_{n}\}$ satisfies statement (2) of Lemma~\ref{deltabasisc0}. 
	%there are constants $C_2,C_3>0$ such that, for all $n\in \N$, $\rho_{p,1}(f_n)\leq C_2$ and 
%	\begin{align*}
%	&|f_{m}(z_m)|(1-|z_m|)^{1+\frac{1}{p}}\geq \frac{1}{C_3}\\
%	&\sum_{n\neq m} |f_{m}(z_n)|(1-|z_n|)^{1+\frac{1}{p}} \leq \frac{1}{2C_3}.
%	\end{align*}
	
	Let us see that $\rho_{p,1}(f_n)\lesssim 1$. First of all, we observe that 
	\begin{align*}
	\rho_{p,1}(f_n)&=\int_{0}^{2\pi}\left(\int_{0}^{1} \frac{(1-r_n)^{p}}{|1-\overline{z_n}re^{i\theta}|^{2p+1}}\ dr\right)^{1/p}\ \frac{d\theta}{2\pi}\\
	&\leq 8(1-r_n)\int_{0}^{\pi/4}\left(\int_{0}^{1} \frac{1}{|1-r r_{n} e^{i\theta}|^{2p+1}}\ dr\right)^{1/p}\ \frac{d\theta}{2\pi}.
	\end{align*}
	
%	Now, we separate the last integral in the following way
%	\begin{align*}
%	&\int_{0}^{\pi/4}\left(\int_{0}^{1} \frac{1}{|1-r r_{n} e^{i\theta}|^{2p+1}}\ dr\right)^{1/p}\ \frac{d\theta}{2\pi}\leq \int_{0}^{1-r_n}\left(\int_{0}^{1} \frac{1}{|1-r r_{n} e^{i\theta}|^{2p+1}}\ dr\right)^{1/p}\ \frac{d\theta}{2\pi}\\
%	&\qquad + \int_{1-r_n}^{\pi/4}\left(\int_{0}^{\frac{1-\theta}{r_n}} \frac{1}{|1-r r_{n} e^{i\theta}|^{2p+1}}\ dr+\int_{\frac{1-\theta}{r_n}}^{1} \frac{1}{|1-r r_{n} e^{i\theta}|^{2p+1}}\ dr\right)^{1/p}\ \frac{d\theta}{2\pi}.
%	\end{align*}
Since $|1-rr_{n}e^{i\theta}|\geq \frac{1}{4}\theta$ whenever $0\leq \theta\leq \pi/4$ and $1\geq r\geq \frac{1-\theta}{r_{n}}$, we have	
%	Bearing in mind the following estimate 
%	\begin{align*}
%	|1-rr_n e^{i\theta}|^2 \asymp
%	\left\{
%	\begin{array}{cc}
%	(1-rr_n)^2 & \mbox{if } r<\frac{1-\theta}{r_n} \\
%	\theta^2  & \mbox{if } r>\frac{1-\theta}{r_n}
%	\end{array}
%	\right.,
%	\end{align*}
%	we have that
	\begin{align*}
	&\int_{0}^{\pi/4}\left(\int_{0}^{1} \frac{1}{|1-r r_{n} e^{i\theta}|^{2p+1}}\ dr\right)^{1/p}\ \frac{d\theta}{2\pi}\leq  \int_{0}^{1-r_n}\left(\int_{0}^{1} \frac{1}{|1-r r_{n} e^{i\theta}|^{2p+1}}\ dr\right)^{1/p}\ \frac{d\theta}{2\pi}\\
	&\qquad + \int_{1-r_n}^{\pi/4}\left(\int_{0}^{\frac{1-\theta}{r_n}} \frac{1}{|1-r r_{n} e^{i\theta}|^{2p+1}}\ dr+\int_{\frac{1-\theta}{r_n}}^{1} \frac{1}{|1-r r_{n} e^{i\theta}|^{2p+1}}\ dr\right)^{1/p}\ \frac{d\theta}{2\pi} \\
&\leq	\int_{0}^{1-r_n}\left(\int_{0}^{1} \frac{1}{(1-r r_{n})^{2p+1}}\ dr\right)^{1/p}\ \frac{d\theta}{2\pi}\\
	&\qquad 
	+ \int_{1-r_n}^{\pi/4}\left(\int_{0}^{\frac{1-\theta}{r_n}} \frac{1}{(1-r r_{n})^{2p+1}}\ dr+4^{2p+1} \int_{\frac{1-\theta}{r_n}}^{1} \frac{1}{\theta^{2p+1}}\ dr\right)^{1/p}\ \frac{d\theta}{2\pi}\\
	&\leq \frac{1-r_n}{2\pi(2pr_n)^{1/p}} \left(\frac{1}{(1-r_n)^{2p}}\right)^{1/p} + 4^{3}\int_{1-r_n}^{\pi/4}\left( \frac{1}{2pr_n}\left(\frac{1}{\theta^{2p}}\right)+\frac{1}{\theta^{2p+1}}\left(1-\frac{1-\theta}{r_n}\right)\right)^{1/p}\ \frac{d\theta}{2\pi}\\
	&\leq \frac{1}{2\pi(2pr_n)^{1/p}(1-r_n)} + 4^{3}\frac{(2p+1)^{1/p}}{2\pi(2pr_n)^{1/p}}\int_{1-r_n}^{\pi/4} \frac{1}{\theta^2}\ \frac{d\theta}{2\pi}\\
	&\leq \frac{1}{2\pi(2pr_n)^{1/p}(1-r_n)} + 4^{3}\frac{(2p+1)^{1/p}}{2\pi(2pr_n)^{1/p}(1-r_n)}\leq 4^{3}\frac{(2p+1)^{1/p}+1}{2\pi(2pr_n)^{1/p}(1-r_n)}.
	\end{align*}
	Therefore, we conclude that 
	\begin{align*}
	\rho_{p,1}(f_n)\leq 4^{3}\frac{(2p+1)^{1/p}+1}{p^{1/p}}=C_2. 
	\end{align*}
	
	To finish the proof, we have to show that for a certain constant $C_3>0$ it holds
	\begin{align*}
	|f_{m}(z_m)|(1-|z_m|)^{1+\frac{1}{p}}\geq \frac{1}{C_3}, \qquad \sum_{n\neq m} |f_{m}(z_n)|(1-|z_n|)^{1+\frac{1}{p}} \leq \frac{1}{2C_3}.
	\end{align*}
	It is easy to see that
	\begin{align*}
	|f_m(z_m)|(1-|z_m|)^{1+\frac{1}{p}}=\frac{1-r_m}{(1-r_m^2)^{2+\frac{1}{p}}}(1-r_m)^{1+\frac{1}{p}}=\frac{1}{(1+r_m)^{2+\frac{1}{p}}}\geq \frac{1}{2^{2+\frac{1}{p}}}=:\frac{1}{C_3}>0.
	\end{align*}
	If $k>m$ then 
	\begin{align*}
	|f_{m}(z_k)|(1-|z_k|)^{1+\frac{1}{p}}\leq \frac{1-r_m}{(1-r_m)^{2+\frac{1}{p}}}(1-r_k)^{1+\frac{1}{p}}=\left(\frac{\varepsilon_k}{\varepsilon_m}\right)^{1+\frac{1}{p}}.
	\end{align*}
	And if $k<m$, then
	\begin{align*}
	|f_{m}(z_k)|(1-|z_k|)^{1+\frac{1}{p}}\leq \frac{1-r_m}{(1-r_k)^{2+\frac{1}{p}}}(1-r_k)^{1+\frac{1}{p}}=\frac{\varepsilon_m}{\varepsilon_k}.
	\end{align*}
	Now, bearing in mind that $0<\beta<\frac{1}{1+2^{4+\frac{1}{p}}}$ such that $\frac{\varepsilon_{n+1}}{\varepsilon_n}<\beta$ for all $n$, we obtain that 
	\begin{align*}
	\sum_{k\neq m} |f_{m}(z_k)|(1-|z_k|)^{1+\frac{1}{p}} <\sum_{k=m+1}^{\infty} \beta^{k+\frac{k}{p}}+\sum_{k=1}^{m-1} \beta^k\leq \frac{\beta^{1+\frac{1}{p}}}{1-\beta^{1+\frac{1}{p}}}+\frac{\beta-\beta^m}{1-\beta}<\frac{2\beta}{1-\beta}<\frac{1}{2C_3}.
	\end{align*}
	
	Therefore, applying Lemma~\ref{deltabasisc0} we have proved that $\{(1-|z_n|)^{1+\frac{1}{p}}\ \delta_{z_n} \}$ is equivalent to the basis of $c_0$ in $(RM(p,1))^{\ast}$.
\end{proof}

As consequence of these lemmas we obtain a characterization of the weak compactness of $T_g: RM(p,1)\rightarrow RM(p,1)$.

\begin{theorem} \label{Thm:T*fixes c0}
Let $g\in\mathcal{B}$ and $1\leq p<+\infty$. If $T_g: RM(p,1)\rightarrow RM(p,1)$ is not compact, 
then $T_{g}^{\ast}$ fixes a copy of $c_0$.
\end{theorem}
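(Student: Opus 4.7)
The plan is to combine the non-compactness hypothesis with the characterization of compactness already established, then use the $c_0$-sequences provided by Lemma~\ref{sequencedeltaequibasisc0}. By Theorem~\ref{thboundedopinteg}(3), if $T_g\colon RM(p,1)\to RM(p,1)$ is bounded but not compact, then $g\in\mathcal{B}\setminus\mathcal{B}_0$. Hence there exist $\delta>0$ and a sequence $\{w_n\}\subset\D$ with $|w_n|\to 1$ and $(1-|w_n|^2)|g'(w_n)|\geq\delta$, which after discarding finitely many terms gives $(1-|w_n|)|g'(w_n)|\geq \delta/2$ and $|w_n|\geq 1/2$.

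Next, I would extract a subsequence $\{z_n\}\subset\{w_n\}$ satisfying $\tfrac{1-|z_{n+1}|}{1-|z_n|}\leq \tfrac{\beta}{n^2}$ for some $\beta\in(0,(1+2^{4+1/p})^{-1})$; this is immediate by induction since $|w_n|\to 1$. By Lemma~\ref{sequencedeltaequibasisc0}, both sequences
\[
\xi_n:=(1-|z_n|)^{2+1/p}\,\delta'_{z_n},\qquad \eta_n:=(1-|z_n|)^{1+1/p}\,\delta_{z_n}
\]
are equivalent to the unit vector basis of $c_0$ in $(RM(p,1))^{\ast}$. In particular, the closed linear span $E:=\overline{\mathrm{span}}\{\xi_n\}$ is isomorphic to $c_0$, and provides our candidate copy.

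The key algebraic identity is
\[
\langle T_g^{\ast}\delta'_z,f\rangle=(T_g f)'(z)=g'(z)f(z)=g'(z)\,\langle\delta_z,f\rangle,
\]
so $T_g^{\ast}\xi_n=\mu_n\eta_n$ with $\mu_n:=(1-|z_n|)g'(z_n)$. By construction $\delta/2\leq|\mu_n|\leq\|g\|_{\mathcal B}$, so multiplication by $\{\mu_n\}$ is an isomorphism of $c_0$ onto itself. Hence the image sequence $\{\mu_n\eta_n\}$ is also equivalent to the basis of $c_0$, and for any finitely supported $(\alpha_n)$,
\[
\Bigl\|T_g^{\ast}\Bigl(\sum_n\alpha_n\xi_n\Bigr)\Bigr\|_{(RM(p,1))^{\ast}}=\Bigl\|\sum_n\alpha_n\mu_n\eta_n\Bigr\|_{(RM(p,1))^{\ast}}\asymp \max_n|\alpha_n\mu_n|\asymp \max_n|\alpha_n|.
\]
Therefore $T_g^{\ast}\big|_{E}\colon E\to (RM(p,1))^{\ast}$ is an isomorphic embedding, which is the desired copy of $c_0$ fixed by $T_g^{\ast}$.

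The argument is essentially a clean assembly: all the heavy lifting is done by Theorem~\ref{thboundedopinteg} (to produce a sequence of points where $(1-|z|)|g'(z)|$ is bounded below) and by Lemma~\ref{sequencedeltaequibasisc0} (to produce simultaneously the two $c_0$-equivalent sequences built from $\delta_{z_n}$ and $\delta'_{z_n}$). The only non-trivial point to verify is the lower bound on $|\mu_n|$, which requires only the upgrade from $1-|z_n|^2$ to $1-|z_n|$ by a factor of at most $2$; this ensures that $T_g^{\ast}$ is bounded below (not merely bounded) on $E$, turning an inclusion of sequences into an isomorphic embedding of subspaces.
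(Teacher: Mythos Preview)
Your proof is correct and follows essentially the same route as the paper: extract from $g\in\mathcal{B}\setminus\mathcal{B}_0$ a sequence $\{z_n\}$ with $(1-|z_n|)|g'(z_n)|$ bounded above and below, thin it to satisfy the sparsity hypothesis of Lemma~\ref{sequencedeltaequibasisc0}, and then use the identity $T_g^{\ast}\delta'_{z_n}=g'(z_n)\delta_{z_n}$ together with the two $c_0$-equivalences from that lemma. Your write-up is in fact slightly more explicit than the paper's in spelling out why the bounded-below multipliers $\mu_n$ turn the map into an isomorphic embedding on $E$.
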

\begin{proof}
	Since $g\in\mathcal{B}$ and $T_g: RM(p,1)\rightarrow RM(p,1)$ is not compact, by Theorem \ref{thboundedopinteg}, there are constants $C,\delta>0$ and a sequence $\{z_n\}$ where $|z_n|\rightarrow 1$ such that 
	$$\delta\leq |g'(z_n)|(1-|z_n|)\leq C.$$
	Using that 
	$$
	T_{g}^{\ast}((1-|z_n|)^{2+\frac{1}{p}}\delta'_{z_n})=g'(z_n) (1-|z_n|)^{2+\frac{1}{p}}\delta_{z_{n}}, \quad |g'(z_n)| (1-|z_n|)^{2+\frac{1}{p}} \asymp  (1-|z_n|)^{1+\frac{1}{p}} ,
	$$ 
	and extracting a subsequence  such that $\frac{1-|z_{n_{k+1}}|}{1-|z_{n_k}|}\leq \frac{\beta}{k^2}$ with $\beta \in \left(0,\frac{1}{1+2^{4+\frac{1}{p}}}\right)$, by Lemma \ref{sequencedeltaequibasisc0}, we conclude that $T_{g}^{\ast}$ fixes a copy of $c_0$. 
\end{proof}

\begin{corollary} \label{Cor:T*fixes c0} Let $g\in\mathcal{B}$ and $1\leq p<+\infty$. Then the following are equivalent:
\begin{enumerate}
\item $T_g: RM(p,1)\rightarrow RM(p,1)$ is not compact;
\item $T_{g}^{\ast}$ fixes a copy of $c_0$;
\item  $T_{g}$ fixes a copy of $\ell_{1}$.
\end{enumerate}
\end{corollary}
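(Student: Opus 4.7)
My plan is to close a cycle proving the three equivalences. The equivalence $(1)\Leftrightarrow(2)$ is essentially immediate: $(1)\Rightarrow(2)$ is exactly Theorem~\ref{Thm:T*fixes c0}, while $(2)\Rightarrow(1)$ follows from Schauder's theorem together with the elementary fact that a compact operator cannot be an isomorphism on any infinite-dimensional closed subspace (the image of such a unit ball would be relatively compact but also contain an isomorphic copy of $B_{c_0}$). The implication $(3)\Rightarrow(1)$ is equally short: if $T_g$ fixes $\ell_1$, then the image of the unit ball of that copy is isomorphic to $B_{\ell_1}$, which is not weakly compact, so $T_g$ is not even weakly compact, hence not compact.

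The substantive part is $(1)\Rightarrow(3)$. I will adapt the construction in the proof of Theorem~\ref{Thm:T*fixes c0}. Since $T_g$ is not compact and $g\in\mathcal{B}$, Theorem~\ref{thboundedopinteg} gives $g\notin\mathcal{B}_0$, so there exist $\delta>0$ and a sequence $\{z_n\}\subset\D$ with $\delta\leq |g'(z_n)|(1-|z_n|)\leq \|g\|_\mathcal{B}$. After passing to a sufficiently sparse subsequence satisfying $(1-|z_{n+1}|)/(1-|z_n|)\leq \beta/n^2$ for a small $\beta>0$ to be fixed, I will use the reproducing-type functions $f_n(z):=(1-|z_n|)/(1-\overline{z_n}z)^{2+1/p}$ from the proof of Lemma~\ref{sequencedeltaequibasisc0}. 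That proof already establishes that $\{f_n\}$ is bounded in $RM(p,1)$, that $(1-|z_n|)^{1+1/p}f_n(z_n)\asymp 1$, and that the off-diagonal sums satisfy $\sum_{m\neq n}(1-|z_m|)^{1+1/p}|f_n(z_m)|\leq 2\beta/(1-\beta)$.

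The $\ell_1$-equivalence of both $\{f_n\}$ and $\{T_g f_n\}$ will then follow by a duality argument using the $c_0$-equivalent sequences of point evaluations and their derivatives supplied by Lemma~\ref{sequencedeltaequibasisc0}. For any finite sum $\sum\alpha_n f_n$, I pair it with $\Lambda_1:=\sum_n \overline{\mathrm{sign}(\alpha_n)}\,(1-|z_n|)^{1+1/p}\delta_{z_n}$, whose norm is controlled by the $c_0$-basis constant $K_1$; the pairing $\Lambda_1(\sum\alpha_m f_m)$ splits into a diagonal sum $\asymp\sum|\alpha_m|$ minus an off-diagonal error $\leq \tfrac{2\beta}{1-\beta}\sum|\alpha_m|$, yielding the lower $\ell_1$-bound for small $\beta$. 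Similarly, since $(T_g f_n)'=f_n g'$, I pair $\sum\alpha_n T_g f_n$ with $\Lambda_2:=\sum_n \overline{\mathrm{sign}(\alpha_n g'(z_n)f_n(z_n))}\,(1-|z_n|)^{2+1/p}\delta'_{z_n}$; its diagonal is $\gtrsim \delta\sum|\alpha_m|$ and its off-diagonal is bounded by $\|g\|_\mathcal{B}\tfrac{2\beta}{1-\beta}\sum|\alpha_m|$. The only point of care, and the main obstacle, is the simultaneous calibration of $\beta$: the second estimate requires $\beta$ small enough that $\|g\|_\mathcal{B}\tfrac{2\beta}{1-\beta}<c\delta$ for the absolute constant $c$ appearing in the diagonal. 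This is harmless since $\beta$ can be made arbitrarily small by taking a sparser subsequence from the start, so both diagonals strictly dominate their off-diagonals and the $\ell_1$-copy fixed by $T_g$ is produced.
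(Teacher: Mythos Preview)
Your proposal is correct, and your cycle $(2)\Rightarrow(1)$, $(3)\Rightarrow(1)$, $(1)\Rightarrow(2)$ matches the paper. The divergence is in obtaining that $T_g$ fixes a copy of $\ell_1$. The paper proves $(2)\Rightarrow(3)$ by invoking a general Banach-space fact: if $T^*$ fixes a copy of $c_0$, then $T^*$ is not unconditionally converging (Bessaga--Pe\l czy\'nski), hence $T$ is not $\ell_1$-cosingular (Pe\l czy\'nski), and a standard argument then shows $T$ fixes a copy of $\ell_1$. Your route instead proves $(1)\Rightarrow(3)$ directly: you reuse the reproducing-type functions $f_n$ from Lemma~\ref{sequencedeltaequibasisc0} and the two $c_0$-equivalent evaluation sequences $(1-|z_n|)^{1+1/p}\delta_{z_n}$ and $(1-|z_n|)^{2+1/p}\delta'_{z_n}$ to produce explicit $\ell_1$-lower bounds for both $\{f_n\}$ and $\{T_g f_n\}$. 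This works because the constants $C_1,C_2,C_3$ in that lemma (and hence the $c_0$-basis constants) are independent of $\beta$, so shrinking $\beta$ to beat the off-diagonal term $\|g\|_{\mathcal B}\tfrac{2\beta}{1-\beta}$ against the diagonal $\delta/C_3$ costs nothing. Your argument is longer but entirely self-contained within the estimates already established in Section~6.3, avoiding the external references to \cite{Bessaga-Pelczynski}, \cite{Pelczynski}, and \cite{Howard}; the paper's argument is shorter and more conceptual but imports nontrivial abstract operator theory.
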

\begin{proof} Being clear that (3) implies (1) and, using Theorem \ref{Thm:T*fixes c0}, that (1) implies (2), we just have to justify that (2) implies (3). But this is a consequence of the fact that if the adjoint of a bounded operator between two Banach spaces fixes a copy of $c_{0}$, then the operator fixes a copy of $\ell_{1}$. This result is probably well-known by specialist (and essentially due to C. Bessaga and A. Pe\l czy\'nski), but  we could not find any reference so that we schedule its proof for the sake of completeness. Assume that  $T:X\to Y$ is bounded and $T^{*}$ is fixes a copy of $c_{0}$.  Then $T^{*}$ is unconditionally converging (\cite{Bessaga-Pelczynski}, \cite[Exercise 8, page 54]{Diestel}), so that $T$ is not an $\ell_{1}$-cosingular operator (\cite{Pelczynski},  \cite[page 273]{Howard}). But a standard argument shows that in this case $T$ fixes a copy of $\ell_{1}$. 
\end{proof}

It is worth pointing out that if an operator fixes a copy of $\ell_{1}$ then, in general, its adjoint does not fix a copy of $c_{0}$ \cite[Example 1.2]{Howard}.

\begin{corollary}\label{Cor:weak-compactness-p1}
	Let $g\in\mathcal{B}$ and $1\leq p<+\infty$. Then $T_g: RM(p,1)\rightarrow RM(p,1)$ is weakly compact if and only if it is compact (and if and only if $g\in \mathcal{B}_{0}$).
\end{corollary}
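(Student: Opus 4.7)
The plan is to observe that this corollary is essentially a repackaging of Corollary \ref{Cor:T*fixes c0} and Theorem \ref{thboundedopinteg}(3), and requires no genuinely new argument. The implication \emph{compact $\Rightarrow$ weakly compact} is trivial, and the equivalence \emph{$T_g$ compact $\Leftrightarrow g\in\mathcal{B}_0$} is exactly Theorem \ref{thboundedopinteg}(3) (applied with $q=1$). So the only content to establish is that if $T_g:RM(p,1)\rightarrow RM(p,1)$ is weakly compact, then it is compact.

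I would argue this by contraposition. Suppose $T_g$ fails to be compact. By Corollary \ref{Cor:T*fixes c0}, $T_g$ fixes a copy of $\ell^1$ in $RM(p,1)$: there exist an isomorphic embedding $J:\ell^1\hookrightarrow RM(p,1)$ and constants $c,C>0$ such that, writing $f_n:=J(e_n)$, both $\{f_n\}$ and $\{T_g f_n\}$ satisfy
\begin{equation*}
c\sum_n |\alpha_n|\leq \Bigl\|\sum_n \alpha_n T_g f_n\Bigr\|_{RM(p,1)}\leq\Bigl\|\sum_n\alpha_n f_n\Bigr\|_{RM(p,1)}\leq C\sum_n|\alpha_n|
\end{equation*}
for every finitely supported scalar sequence $\{\alpha_n\}$. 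Thus $\{T_g f_n\}$ is a bounded sequence in $RM(p,1)$ equivalent to the canonical basis of $\ell^1$.

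To conclude non-weak-compactness, I invoke the Schur property of $\ell^1$: any weakly convergent sequence in $\ell^1$ converges in norm. If $\{T_g f_{n_k}\}$ were weakly convergent in $RM(p,1)$, then by the equivalence with the $\ell^1$-basis, pulling back through $J^{-1}$ gives a weakly Cauchy subsequence of $\{e_{n_k}\}$ in $\ell^1$, hence a norm-Cauchy one; but $\{e_n\}$ has no norm-Cauchy subsequence, a contradiction. So $\{T_g f_n\}$ has no weakly convergent subsequence, meaning $T_g(B_{RM(p,1)})$ is not relatively weakly compact and $T_g$ is not weakly compact.

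There is no real obstacle here: the deep work was already done in Theorem \ref{Thm:T*fixes c0} (the construction of the interpolating sequence of point evaluations via Lemmas \ref{deltabasisc0}--\ref{sequencedeltaequibasisc0}) and in the duality step $T_g^*$ fixes $c_0$ $\Rightarrow$ $T_g$ fixes $\ell^1$ of Corollary \ref{Cor:T*fixes c0}. The only standard fact used above is that operators fixing a copy of $\ell^1$ cannot be weakly compact, which follows immediately from the Schur property.
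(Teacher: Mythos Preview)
Your proposal is correct and is exactly the argument the paper intends: the corollary is stated without proof immediately after Corollary~\ref{Cor:T*fixes c0}, precisely because the implication ``not compact $\Rightarrow$ fixes a copy of $\ell^1$ $\Rightarrow$ not weakly compact (by Schur)'' together with Theorem~\ref{thboundedopinteg}(3) is all that is needed. One tiny wording slip: when you pull back the weakly convergent subsequence, the relevant isomorphism is $(T_g\circ J)^{-1}$ on $\overline{\mathrm{span}}\{T_g f_n\}$, not $J^{-1}$, but the idea is clear and the conclusion is unchanged.
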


%\bibliographystyle{amsalpha}
%\bibliography{biblio}
%\nocite{*}	
\end{document}